\documentclass[11pt,a4paper]{article}

\usepackage{epsf,epsfig,amsfonts,amsgen,amsmath,amstext,amsbsy,amsopn,amsthm}
\usepackage{amsmath}
\usepackage{amsfonts,amsthm,amssymb,bm}
\usepackage{amsfonts}
\usepackage{graphics}
\usepackage{latexsym,bm}
\usepackage{amsfonts,amsthm,amssymb,bbding}
\usepackage{indentfirst}
\usepackage{graphicx}
\usepackage{color}

\usepackage[colorlinks=true,anchorcolor=blue,filecolor=blue,linkcolor=blue,urlcolor=blue,citecolor=red]{hyperref}
\usepackage{float}
\usepackage{tikz,enumerate}

\usepackage[margin=2.5cm]{geometry}

\newtheorem{thm}{Theorem}[section]
\newtheorem{prob}[thm]{Problem}

\newtheorem{lem}[thm]{Lemma}
\newtheorem{cor}[thm]{Corollary}

\newtheorem{conj}[thm]{Conjecture}
\newtheorem{claim}{Claim}[section]
\newtheorem{definition}{Definition}[section]

\addtocounter{section}{0}

\usepackage{enumitem}  

\begin{document}
\title{Counting color-critical subgraphs under Nikiforov's condition\footnote{Supported by the National Natural Science Foundation of China (Nos.\,12501471,\,12271162,\,12571369),
and the Natural Science Foundation of Shanghai (No.\,22ZR1416300).}}
\author{{\bf Longfei Fang$^{a,c}$},~{\bf Huiqiu Lin$^{a}$},~
{\bf Mingqing Zhai$^b$}\thanks{Corresponding author: mqzhai@njust.edu.cn
(M. Zhai)} \\[2mm]
\small $^{a}$ School of Mathematics, East China University of Science and Technology, \\
\small  Shanghai 200237, China\\
\small $^{b}$ School of Mathematics and Statistics, Nanjing University of Science and
Technology, \\
\small Nanjing, Jiangsu 210094, China\\
\small $^{c}$ School of Mathematics and Finance, Chuzhou University,\\
\small  Chuzhou, Anhui 239012, China
}

\date{\today}
\maketitle

\begin{abstract}
Inspired by Mubayi's 2010 work on the supersaturation problem for color-critical subgraphs,
we study its spectral counterpart.
Specifically, we seek to determine the minimum number of copies of a given substructure in a graph of fixed size whose spectral radius exceeds the corresponding extremal threshold.
For a graph $G$ with $m$ edges, let $\rho(G)$ be its spectral radius, and let $N_F(G)$ denote the number of copies of $F$ in $G$.
Nikiforov [Combin. Probab.\,Comput., 2002] proved that for $r\geq 2$, if $\rho(G)\!>\!\sqrt{(1\!-\!1/r)2m}$,
then $N_{K_{r+1}}(G)\geq 1$.
Furthermore, Bollob\'{a}s and Nikiforov [J. Combin.\,Theory, Ser.\,B, 2007] used $\rho(G)$ to establish a counting inequality for complete subgraphs.
Following Nikiforov's condition, Ning and Zhai derived the sharp bound $N_{K_{3}}(G)\!\geq\!\lfloor(\sqrt{m}\!-\!1)/2\rfloor$ for triangles,
while Li, Liu, and Zhang showed that $N_{K_{r+1}}(G)\!=\!\Omega_r(m^{(r\!-\!1)/2})$.
In this paper, we introduce a novel concept of $\varepsilon$-dense subgraphs,
through which we generalize and strengthen the above results to any color-critical graph $F$ with chromatic number at least four.
More precisely, we demonstrated that
under Nikiforov's condition, the number of copies of $F$ in $G$ satisfies
$N_F(G)\geq\big(\gamma_F-o(1)\big)m^{(|F|\!-\!2)/2},$
where both the leading item and the constant $\gamma_F$ are optimal.

Let $F$ be a non-star graph with $\chi(F)\!=\!r+1$,
and let $G$ be any graph of sufficiently large size $m$ satisfying $N_F(G)\!=\!o(m^{|F|/2})$.
To support the aforementioned counting arguments, we initially
employ the method of progressive induction to tackle spectral problems, proving that
$\rho(G)\!\leq\!\sqrt{(1\!-\!1/r\!+\!o(1))2m}$ for $r\geq 3$, and $\rho(G)\!\leq\!\sqrt{(1\!+\!o(1))m}$ for $r\in \{1,2\}$.
Furthermore, we establish a stability result for edge-spectral supersaturation:
specifically, if $r\geq 3$ and $\rho(G)\!\geq\!\sqrt{(1\!-\!1/r\!-\!o(1))2m}$,
then $G$ differs from an $r$-partite Tur\'{a}n graph by $o(m)$ edges;
if $r\in \{1,2\}$ and $\rho(G)\!\geq\!\sqrt{(1\!-\!o(1))m}$,
then $G$ differs from a complete bipartite graph by $o(m)$ edges.
This implies the well-known Erd\H{o}s–Simonovits stability theorem and existing spectral stability theorems,
by strengthening the setting from $F$-free graphs to graphs containing only a limited number of copies of \( F \).
Finally, we propose several counting-related open problems for further investigation.
\end{abstract}

\begin{flushleft}
\textbf{Keywords:} Color-critical graph; Spectral radius; Supersaturation; Progressive induction
\end{flushleft}
\textbf{AMS Classification:} 05C35; 05C50

\section{Introduction}
The supersaturation problem for a graph $F$ is to determine the minimum number of copies of $F$ within an
$n$-vertex graph that exceeds the edge extremal threshold.
This problem has been extensively studied for various graphs $F$, and
it has also led to the development of new techniques and tools in extremal graph theory.
Let $T_{n,r}$ denote the Tur\'{a}n graph on $n$ vertices,
which is a complete $r$-partite graph where all partition classes are nearly equal in size.
The celebrated Tur\'{a}n's theorem states that any $n$-vertex graph with $e(T_{n,r})+1$ edges contains at least one copy of $K_{r+1}$.
In 1941, Rademacher proved that any $n$-vertex graph with $e(T_{n,2}) + 1$ edges
    contains at least $\lfloor \frac{n}{2} \rfloor \) copies of $K_3$.
 This result is often recognized as the starting point for the study on the supersaturation problem in extremal graph theory.
Let $\chi (F)$ denote the chromatic number of a graph $F$.
A graph $F$ is said to be \emph{color-critical}
if there exists an edge $e\in E(F)$ such that $\chi(F-\{e\})<\chi(F)$.
A classic theorem of Simonovits \cite{Simonovits1968} states that
for sufficiently large $n$, the Tur\'{a}n graph $T_{n,r}$ is
the unique edge-extremal graph for any color-critical graph $F$ with $\chi(F)=r +1\geq3$.
In other words,
he proved that if $n$ is sufficiently large, then any $n$-vertex graph with $e(T_{n,r})+1$ edges contains at least one copy of $F$.
Mubayi \cite{Mubayi2010} extended Simonovits' theorem to the following couting version by employing an innovative and
unified approach for color-critical graphs.
Throughout this paper,
for a given graph $F$,
we define $N_F(G)$ as the number of copies of $F$ contained in a graph $G$.
In particular, we denote by $c(n, F)$ the minimum number of copies of $F$ in the graph derived from $T_{n,r}$ by adding a single edge.

\begin{thm}\label{thm1.1C}\emph{(Mubayi \cite{Mubayi2010})}
Let $F$ be a color-critical graph
with $\chi (F)=r+1\geq3$.
Then, there exists $\delta =\delta_F>0$ such that
for all sufficiently large $n$, any positive integer $q<\delta n$, and
every $n$-vertex graph $G$ with
$e(G)\ge e(T_{n,r})+q,$ we have $N_F(G)\geq q\cdot c(n,F)$.
\end{thm}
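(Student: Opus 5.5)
We prove Mubayi's theorem by a stability-and-cleaning argument followed by counting copies of $F$ through the bad edges. Let $h=|F|$, fix a critical edge $uv$ of $F$, and note that $F-uv$ embeds in a complete $r$-partite graph with $u,v$ in the same part. First we record the order of $c(n,F)$: adding an edge $xy$ inside one part of $T_{n,r}$ gives $c(n,F)=\Theta(n^{h-2})$. More precisely, $c(n,F)$ equals the number of embeddings of $F$ sending some critical edge onto $xy$ and all other edges into $T_{n,r}$, counted up to automorphisms. We shall need a robust version of this: if $xy$ lies inside a part $V_i$ of an $r$-partition that is $o(n^2)$-close to $T_{n,r}$, and $x,y$ each have almost full degree into every other part, then $xy$ lies in at least $(1-\eta)c(n,F)$ copies of $F$. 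This follows by greedy embedding, since every copy of $F$ in $T_{n,r}+xy$ avoiding low-degree defects survives, and only $o(n^{h-2})$ copies are lost.

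Suppose, for contradiction, that $G$ is a minimal counterexample. The first reduction is to the case where $N_F(G)$ is small, say $N_F(G)<q\,c(n,F)\le \delta n\cdot c(n,F)=O(\delta n^{h-1})$, which is $o(n^{h})$ when $\delta$ is small. By the removal lemma, $G$ can be made $F$-free by deleting $\epsilon n^2$ edges. The Erd\H{o}s–Simonovits stability theorem then gives a max-cut $r$-partition $V_1,\dots,V_r$ with at most $\epsilon' n^2$ edges inside parts and at most $\epsilon' n^2$ crossing non-edges. Next we clean up low-degree vertices. If some vertex $w$ has $d(w)<(1-1/r-\beta)n$, we delete it and apply induction on $n$ (progressive induction): $G-w$ has at least $e(T_{n-1,r})+q+\beta n/2$ edges. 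Iterating this, using $c(n,F)\ge c(n-1,F)$ and the fact that the surplus $q$ grows, shows that we may assume the minimum degree satisfies $\delta(G)\ge(1-1/r-\beta)n$. We must check that the accumulated surplus keeps $q<\delta(n-k)n$ or else yields directly many copies; if the surplus exceeds $\delta n$, we instead use the standard supersaturation bound $\Omega(n^{h})$. The max-cut property together with the minimum degree gives that every vertex has at most $\beta' n$ neighbours in its own part.

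Now let $B$ be the set of edges inside parts and $M$ the set of crossing non-edges. Then $e(G)=e(K(V_1,\dots,V_r))-|M|+|B|\le e(T_{n,r})-|M|+|B|$, so $|B|\ge q+|M|$. For each $xy\in B$, say in $V_i$, we count copies of $F$ with the critical edge mapped to $xy$ and the remaining vertices embedded in the crossing structure. The typical count is $c(n,F)$ minus a correction for missing edges. Each vertex has at least $|V_j|-\beta'' n$ neighbours in every other part $V_j$, by the degree condition and max-cut, so the count is at least $(1-\eta)c(n,F)$ when the parts are balanced. The loss due to $M$ is at most $|M|\cdot O(n^{h-4})$ per pair, summed over the pairs in $B$. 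The delicate point is obtaining exactly $q\cdot c(n,F)$ rather than $(1-\eta)q\cdot c(n,F)$. We handle it by trading: each non-edge in $M$ kills at most $O(n^{h-4})\cdot|B|$ copies in aggregate, but it also forces an extra edge of $B$, which contributes a full $\Theta(n^{h-2})$ copies. In addition, the part sizes deviate from balance by $t$, which costs $\Theta(t^2)$ edges and thus also raises $|B|$ relative to $q$ by $\Theta(t^2)$. Since $|B|\le \delta n$ plus surplus, interactions between pairs of bad edges (copies using two $B$-edges) are counted separately and only help. The $O(|B|^2 n^{h-4})$ overcount is dominated by the $|B|-q$ slack or by $\delta$ times the main term.

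The main obstacle is this exact-constant accounting. We must show that the total count $\sum_{xy\in B}\#(\text{copies through }xy)$ is at least $q\,c(n,F)$ exactly, while ensuring that copies containing several edges of $B$ are not double counted. Imbalanced parts, where $c$ for an edge in a larger part may be smaller, are the key case. We would attempt it by comparing with $T_{n,r}$ directly: write the count for $xy$ in $V_i$ as $c_i$, show that $\min_i c_i\ge c(n,F)-O(t n^{h-3})$, and absorb the deficit using the extra $\Omega(t^2)$ edges forced in $B$, with $t$ small relative to $n$ by stability. Choosing $\delta$ small enough relative to all the $\beta,\eta$ parameters closes the argument.
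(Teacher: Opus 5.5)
\textbf{No in-paper proof to compare against.} The paper quotes this theorem from Mubayi without proof, so I am judging your sketch directly. Its architecture is sound: removal lemma plus stability, a max-cut partition, cleaning, then counting copies whose unique class edge is some $xy\in B$, using the trade-off $|B|\ge q+|M|+(\text{imbalance cost})$. The step you flag as the main obstacle does close.

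\textbf{The genuine gap: the minimum-degree reduction.} Deleting one vertex $w$ with $d(w)<(1-\frac1r-\beta)n$ gives $e(G-w)\ge e(T_{n-1,r})+q+\beta n-1$. Since your scheme needs $\delta\ll\beta$, this surplus already exceeds $\delta(n-1)$. So the theorem cannot serve as an induction hypothesis for $G-w$. Trimming edges back below $\delta(n-1)$ does not help either when $q$ is close to $\delta n$.

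Your fallback is false. A surplus $s$ with $\delta n\le s=o(n^2)$ does not force $\Omega(n^{h})$ copies: $T_{n,r}$ plus a matching of $\beta n/2$ edges inside one part has only $O(\beta n^{h-1})$ copies of $F$. Erd\H{o}s--Simonovits supersaturation needs surplus $\Omega(n^2)$.

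What is needed is the following.
\begin{itemize}
\item Delete low-degree vertices iteratively.
\item If at least $\epsilon_1 n$ of them are deleted, the surplus is $\Omega(n^2)$, and supersaturation gives $\Omega(n^h)>q\,c(n,F)$ copies.
\item Otherwise the final graph $G_k$ has high minimum degree and surplus $q_k\ge q+k\beta n/2$. You must then run your counting on $G_k$ for \emph{arbitrary} surplus. It should give $N_F\ge\frac12\min\{q_k,\delta' n\}\,c(n-k,F)$ in general, and the exact bound when $k=0$, with $\delta\ll\delta'$.
\item For $k\ge1$ the lossy bound already beats $q\,c(n,F)$, because $q_k\ge\beta n/2\gg\delta n$.
\end{itemize}

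\textbf{In-part degrees.} Max-cut plus $\delta(G)\ge(1-\frac1r-\beta)n$ does not bound in-part degrees by $\beta' n$. It only gives $d_{V_i}(v)\le d(v)/r$: for $r=2$, a vertex joined to half of each side has degree $n/2$ and $n/4$ neighbours in its own part. You need a separate argument.
\begin{itemize}
\item Such a $v$ still has $(\frac1{r^2}-o(1))n$ neighbours in every other part.
\item Only $O(\epsilon' n/\beta')$ vertices have in-part degree at least $\beta' n$, so most in-part neighbours $w$ of $v$ are typical.
\item Each edge $vw$ with $w$ typical lies in $\Omega(n^{h-2})$ copies in which $vw$ is the only class edge.
\item This gives $\Omega(\beta' n^{h-1})>q\,c(n,F)$ copies, a contradiction.
\end{itemize}
The partner $w$ must be typical, because two atypical vertices need not have any common neighbour in another part.

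\textbf{The cap on $|B|$.} Your phrase ``$|B|\le\delta n$ plus surplus'' is unjustified; you only have a lower bound on $|B|$. Once every vertex is typical, each $B$-edge lies in at least $\frac12c(n,F)$ such copies, so you may assume $|B|\le2\delta n$. This cap is exactly what makes your trades work.
\begin{itemize}
\item A missing pair $ab\in M$ destroys $O\big((|B|+n\,d_B(a)+n\,d_B(b))n^{h-4}\big)=O(\delta n^{h-2})$ copies in total. Your estimate omits the $\Theta(n^{h-3})$ loss when $ab$ meets $xy$. This is still far below the $(1-o(1))c(n,F)$ gained from the extra $B$-edge that $ab$ forces.
\item The imbalance deficit $|B|\cdot O(tn^{h-3})=O(\delta t\,n^{h-2})$ is beaten by the $\Omega(t^2)\,c(n,F)$ gain. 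The reason is that $t\ge1$ whenever the part sizes are unbalanced, not that $t=o(n)$.
\end{itemize}
Since you count copies with a unique class edge, there is no double counting to correct.
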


Since then, the study for the supersaturation problem has become very popular.
For further details on this topic, we refer the reader to the references \cite{LS1983,MY2025,Nikiforov2011,PY2017}.

Spectral graph theory is a fascinating branch of mathematics
that lies at the intersection of linear algebra and graph theory.
It focuses on the study of graphs through the properties of matrices associated with them,
 particularly the eigenvalues and eigenvectors of these matrices.
 The eigenvalues of a graph, which are the roots of its characteristic polynomial, provide a wealth of information about the structure of the graph.
It has since become a rich area of study with applications in various fields,
including network analysis, data science, and the physical sciences.

\subsection{Spectral extremal values for graphs with few copies of $F$}

%
%

Given a graph $G$, let $A(G)$ denote its adjacency matrix
and $\rho(G)$ denote the spectral radius of $A(G)$.
A covering of a graph is defined as a set of vertices that intersects all edges of the graph.
Clearly, a graph admits an independent covering if and only if it is bipartite.
We denote $\beta'(F)$ as the minimum cardinality of an independent covering of $F$.
For simplicity, we usually disregard possible isolated vertices in graphs under consideration when there is no risk of confusion.
In the present work,
we initially employ the method of progressive induction to address
issues in spectral graph theory, which provides the theoretical groundwork for our first finding.

\begin{thm}\label{thm1.3C}
Let $F$ be a graph of order $f$ satisfying $\chi(F)=r+1\geq2$,
and let $G$ be a graph of sufficiently large size $m$ such that $N_F(G)=o(m^{f/2})$.
Then, we have
 $$\rho(G)\leq \left\{
                                       \begin{array}{ll}
                                         \sqrt{\big(1-\frac{1}{r}+o(1)\big)2m},  & \hbox{if $r\geq 2$;} \\
                                         \sqrt{\big(1+o(1)\big)m}, & \hbox{if $r=1$ and $\beta'(F)\geq 2$.}
                                       \end{array}
                                     \right.
$$
\end{thm}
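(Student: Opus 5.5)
We argue by contradiction, using progressive induction on the number of edges. Fix $\varepsilon>0$ and set $\lambda_r=1-1/r$ for $r\ge 2$ (so the target bound is $\rho\le\sqrt{(\lambda_r+\varepsilon)2m}$), and $\lambda_1=1/2$ (so the target is $\rho\le\sqrt{(1+\varepsilon)m}=\sqrt{(\lambda_1+\varepsilon/2)2m}$). Suppose there is a sequence of graphs $G$ with $m\to\infty$ edges, $N_F(G)\le\delta m^{f/2}$ for an arbitrarily small $\delta$, and $\rho(G)^2>(\lambda_r+\varepsilon)2m$.

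\textbf{Step 1: passing to a dense subgraph.} Let $x$ be the Perron vector. We repeatedly delete a vertex $v$ of small degree, say $d(v)<\frac{\varepsilon}{4}\rho$. By Rayleigh quotients, $\rho(G-v)^2\ge \rho(G)^2-O(d(v)\rho)$ up to the $x_v^2$ correction. A cleaner version of the same idea is to track the potential $\Phi(H)=\rho(H)^2-(\lambda_r+\varepsilon)2e(H)$. Deleting a vertex of degree at most $c\sqrt{m}$ lowers $2e$ by at most $2c\sqrt m$, and lowers $\rho^2$ by only a controlled amount. Progressive induction asks for a "good" substructure in which $\Phi$ does not decrease, and at least one exists because $\Phi$ is bounded by $\rho^2\le 2e$ and cannot increase forever. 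Where the process stops, we get a subgraph $H$ with $e(H)=m'$ still large (since $\rho(H)^2\ge(\lambda_r+\varepsilon)2m'$ forces $m'$ large), minimum degree $\delta(H)\ge c\sqrt{m'}$, and $\rho(H)^2>(\lambda_r+\varepsilon)2m'$. The number of vertices then satisfies $n'\le 2m'/\delta(H)=O(\sqrt{m'})$.

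\textbf{Step 2: a dense graph with few copies.} Now $H$ has $n'=\Theta(\sqrt{m'})$ vertices, and
\[
N_F(H)\le N_F(G)\le \delta m^{f/2}.
\]
To make $N_F(H)=o(n'^f)$ we need $m'$ comparable to $m$, so the deletion in Step 1 must not destroy too much. Ensuring that the stopping subgraph keeps a constant fraction of the edges is the main obstacle. I would handle it by noting that $\rho^2$ can only be large when much edge mass sits on vertices of degree $\Omega(\sqrt m)$. Specifically, $\rho^2\le\sum_{uv\in E}\dots$, and the bound $\rho^2\le 2m$ together with Hong/Nikiforov-type estimates shows that the vertices of degree $\ge c\sqrt m$ already span $\Omega(m)$ edges carrying the eigenvalue. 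Alternatively, if $m'=o(m)$, we can restart the induction inside $H$ with $H$ in place of $G$; the induction hypothesis at the smaller size still applies because the ratio $N_F/m'^{f/2}$ is what matters, and we choose $\delta$ depending on $c$.

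\textbf{Step 3: removal and Wilf/Nikiforov.} Since $N_F(H)=o(n'^f)$, the graph removal lemma lets us delete $o(n'^2)=o(m')$ edges to get an $F$-free graph $H'$. Edge deletion changes $\rho^2$ by at most $o(m')$, because $\rho(H)-\rho(H')\le\rho(H-H')\le\sqrt{2e(H-H')}=o(\sqrt{m'})$. Hence
\[
\rho(H')^2>(\lambda_r+\varepsilon/2)2m' .
\]
For $r\ge2$, $H'$ is $F$-free, so by Erdős–Stone $H'$ has at most $(\lambda_r+o(1))n'^2/2$ edges. Combining this with Wilf-type results is not direct, so instead we invoke the spectral Erdős–Stone/Nikiforov bound: an $F$-free graph with $\chi(F)=r+1$ satisfies $\rho^2\le(\lambda_r+o(1))2e$. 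This follows from Nikiforov's $\rho^2\le\lambda_r 2m$ for $K_{r+1}$-free graphs, after another removal step that makes $H'$ $K_{r+1}$-free up to $o(m')$ edges; that removal uses supersaturation, which gives $N_{K_{r+1}}=o(n'^{r+1})$ once $F$ has few copies in a dense graph. This contradicts the strict inequality above.

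For $r=1$, $F$ is bipartite with $\beta'(F)\ge2$. An $F$-free graph with $n'=O(\sqrt{m'})$ vertices has $o(n'^2)$ edges, so a dense $H$ cannot be nearly $F$-free. Instead, we use that $\rho^2>(1+\varepsilon)m$ forces a large complete bipartite-like structure. Via Nikiforov's bound $\rho^2\le m$ for graphs without $C_4$/$K_{2,2}$-type obstructions, we find many copies of $K_{2,t}$, and hence of $F$, since $\beta'(F)\ge2$ means $F$ is not a subgraph of a star-plus-few structure. The counting then gives $N_F=\Omega(m^{f/2})$, a contradiction.
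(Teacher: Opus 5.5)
Your endgame (Step 3: removal lemma, then Erd\H{o}s--Frankl--R\"odl to reach a $K_{r+1}$-free graph, then Theorem \ref{thm1.7C}) matches the paper's. The gap is that Steps 1--2, which carry the whole difficulty, are not proved.

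First, you never show that deleting a vertex with $d(v)<\frac{\varepsilon}{4}\rho$ preserves $\rho^2>(\lambda_r+\varepsilon)2e$. For that you need $\rho(G)^2-\rho(G-v)^2<(\lambda_r+\varepsilon)\,2d(v)$. The Rayleigh bound $\rho(G-v)\ge\rho(1-2x_v^2)/(1-x_v^2)$ together with the obvious $\rho^2x_v^2\le d(v)$ only bounds the loss by about $2d(v)$, which exceeds $(\lambda_r+\varepsilon)2d(v)$. Some control of the Perron vector is indispensable here.

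Second, and more seriously, you leave open $e(H)=\Omega(m)$, which is exactly what turns $N_F(G)=o(m^{f/2})$ into $N_F(H)=o(|H|^f)$. Neither remedy works:
\begin{itemize}
\item Restarting inside $H$ fails, because $N_F(H)\le\delta m^{f/2}$ says nothing about $N_F(H)/e(H)^{f/2}$ once $e(H)=o(m)$.
\item ``The potential is bounded, so it cannot increase forever'' does not limit how many edges are removed.
\end{itemize}
With your potential, whose $\varepsilon$ equals the one in the hypothesis, even a non-decreasing $\rho^2-(\lambda_r+\varepsilon)2e$ only yields $\rho(H)^2>(\lambda_r+\varepsilon)2e(H)$. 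That gives no lower bound on $e(H)$.

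The paper closes exactly this gap. It deletes \emph{light} edges ($x_ux_v\le 1/(8\sqrt{e})$), each of which raises $\Phi=\rho/\sqrt{e}$ by at least $1/(5m)$ (Lemma \ref{lem2.8C}(ii)). A round of at most $\lfloor\varepsilon m\rfloor$ deletions has two possible outcomes:
\begin{itemize}
\item It stops at a graph with no light edges. By the delocalization in Lemma \ref{lem2.8C}(iii) this graph has $O_\varepsilon(\sqrt{m'})$ vertices, and your Step 3 then gives a contradiction.
\item It raises $\Phi$ by $\varepsilon/6$. Since $\Phi\le\sqrt2$, progressive induction (Lemma \ref{lem3.1}) bounds the number of rounds, so the survivor keeps $\Omega_\varepsilon(m)$ edges.
\end{itemize}
Your vertex-deletion route is repairable along these lines, but the repair is not in the proposal:
\begin{itemize}
\item For $S=N(v)$ one has $\rho\|x_S\|^2\le(|S|-1)\|x_S\|^2+\rho\|x_{V\setminus S}\|^2$. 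So $|S|<\frac{\varepsilon}{4}\rho$ forces $\|x_S\|^2<1/(2-\varepsilon/4)$, hence $\rho^2x_v^2<d(v)/(2-\varepsilon/4)$ and the loss in $\rho^2$ is at most $(1+\varepsilon/4)d(v)$.
\item Then run the process with the slack potential $\rho^2-(\lambda_r+\varepsilon/2)2e$. It starts above $\varepsilon m$ and never exceeds $\rho^2\le 2e$, which forces $e(H)>\varepsilon m/2$.
\end{itemize}

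Finally, your $r=1$ detour is wrong as stated. Many copies of $K_{2,t}$ do not give copies of $F$ when $\beta'(F)\ge3$, and producing $\Omega(m^{f/2})$ copies is the claim itself. The detour is also unnecessary: once a dense $H$ with $e(H)=\Omega(m)$ exists, the removal lemma plus K\H{o}v\'ari--S\'os--Tur\'an give $e(H)=o(|H|^2)$, a contradiction. The paper instead reduces $r=1$ to $r=2$ via $F_0=K_{a,b}$ plus an edge in the $a$-side, using $N_{F_0}(G)\le N_{F_0}(K_f)N_F(G)$.
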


The Brualdi-Hoffman-Turán-type problem aims to determine the upper bound on the spectral radius of any \( F \)-free graph with \( m \) edges.
Notably, our result offers an asymptotic solution to this problem
and extends the original framework \cite{LZZ25,LZS2024,Li2025+,Li2025+C,LLLY26,ZLS21},
which was initially restricted to \( F \)-free graphs, to a broader class of graphs containing only a few copies of \( F \).

It is also important to note that the coefficient of the leading term $\sqrt{m}$ in Theorem \ref{thm1.3C} is optimal.
When $r=1$ and $\beta'(F)\geq 2$, this bound is achieved by $K_{1,m}$.
For $r\geq 2$, we can select \(n\) to be an integer such that \(e(T_{n,r})\leq m<e(T_{n+1,r})\),
and choose a graph \(G^\star\) with \(m\) edges such that \(T_{n,r}\subseteq G^\star \subsetneq T_{n+1,r}\).
Clearly, $G^\star$ is $F$-free since $\chi(F)>r$.
Next, we lower bound $\rho(G^\star)$.

It is known that $e(T_{n,r})\leq (1-\frac1r)\frac {n^2}2$.
Given that $m<e(T_{n+1,r})$,
it follows that $(n+1)^2\geq 2m/(1\!-\!\frac1r)$.
Furthermore, observe that $G^\star$ contains the $(r\!-\!1)\lfloor\frac{n}{r}\rfloor$-regular graph $T_{r\lfloor \frac{n}{r}\rfloor,r}$
as a subgraph. Thus, we deduce that
$$\rho(G^\star)\geq \rho(T_{r\lfloor \frac{n}{r}\rfloor,r})
=(r\!-\!1)\lfloor\frac{n}{r}\rfloor\geq\sqrt{\big(1\!-\!\frac{1}{r}\!-\!o(1)\big)2m}.$$
Hence, for $r\geq 2$, the coefficient $\sqrt{2(1\!-\!\frac{1}{r})}$ of $\sqrt{m}$ in Theorem \ref{thm1.3C} is optimal.

\subsection{An edge-spectral stability result for graphs with few copies of $F$}

Given two graphs $G$ and $H$ (which may have distinct vertex sets),
we define the \emph{distance} between the graphs $G$ and $H$ as
$$d(G, H):=|E(G)\setminus E(H)|+|E(H)\setminus E(G)|.$$
In other words, $G$ can be obtained from $H$ by adding or deleting a certain number of vertices and edges,
and $d(G, H)$ represents the total number of edge modifications (additions or deletions) required to transform
$H$ into $G$.
For disjoint vertex sets $V_1,\dots,V_r$,
we use $K_{V_1,\dots,V_r}$ to represent the complete bipartite graph on the parts $V_1,\dots,V_r$.
Our second result provides an edge-spectral supersaturation stability.

\begin{thm}[Edge-spectral supersaturation-stability]\label{thm1.6C}
Let $F$ be a fixed graph of order $f$ with $\chi(F)=r+1\geq2$,
and let $G$ be a graph of sufficiently large size $m$ with $N_F(G)=o(m^{f/2})$.
For every $\varepsilon >0$, there exists $\delta >0$ such that:

\vspace{1mm}
{\rm (i)}
If $r\geq 3$ and $\rho(G)\geq\sqrt{(1-\frac{1}{r}-\delta)2m}$,
then there exists a Tur\'{a}n graph $T_{n,r}$ such that $V(T_{n,r})\subseteq V(G)$ and $d(G,T_{n,r})\leq \varepsilon m$;

\vspace{0.5mm}
{\rm (ii)} If $r=2$ and $\rho(G)\geq \sqrt{(1-\delta)m}$, or if $r=1$, $\beta'(F)\geq 2$, and $\rho(G)\geq \sqrt{(1-\delta)m}$,
then there exist two disjoint subsets $U,V\subseteq V(G)$
with $d(G,K_{U,V})\leq \varepsilon m$.
\end{thm}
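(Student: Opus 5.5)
We plan to prove Theorem~\ref{thm1.6C} by reducing the spectral hypothesis to a structure on a dense ``core'' of $G$ and then applying a classical removal/stability argument. Throughout, $x$ denotes the unit Perron vector of $A(G)$ and $\rho=\rho(G)$.

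\textbf{Step 1: find a dense core.} Since $\rho\geq c\sqrt m$, almost all of the quadratic form $\rho=\sum_{uv\in E}2x_ux_v$ must come from few vertices. Each vertex satisfies $x_v\le \sqrt{d(v)/\rho^2}$ up to constants, because $\rho^2x_v=\sum_{u\sim v}\sum_{w\sim u}x_w$. Let $S=\{v:x_v\geq \eta/\sqrt{\rho}\}$ for a small $\eta$. Then $|S|=O(\sqrt m)$. We will show that the edges incident to $V\setminus S$, other than a set of at most $\varepsilon m/3$ edges, form a bipartite-like part that spectrally contributes little. Equivalently, the subgraph $G[S]$ (plus edges from $S$) must carry $\rho(G)-o(\sqrt m)$ of the spectrum.

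\textbf{Step 2: removal.} Since $N_F(G)=o(m^{f/2})$ and $|S|=\Theta(\sqrt m)$, the graph $G[S]$ has $o(|S|^f)$ copies of $F$. The graph removal lemma then lets us delete $o(|S|^2)=o(m)$ edges to obtain an $F$-free graph $G'$, with $\rho(G')\ge\rho(G)-o(\sqrt m)$ by the Frobenius norm bound $\rho(G)-\rho(G')\le\sqrt{2\,d(G,G')}$. Applying Theorem~\ref{thm1.3C} shows the counting condition is the only input needed here. From this point on we work with an $F$-free graph $G'$ satisfying $e(G')\le m$ and $\rho(G')\ge\sqrt{(1-1/r-2\delta)2m}$.

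\textbf{Step 3: stability for $F$-free graphs.} For $r\ge3$ (and $r=2$), Nikiforov's bound $\rho^2\le 2(1-1/\omega)e$ together with $F$-freeness (via Erd\H{o}s--Stone, so the clique-density is essentially that of $K_{r+1}$-free) gives near-equality in Motzkin--Straus. A near-equality in Motzkin--Straus forces the Perron weight to concentrate on an almost balanced almost complete $r$-partite structure on about $n\approx\sqrt{2m/(1-1/r)}$ vertices. Concretely, we take the $r$ parts to be the near-level sets of an optimal weighting and use the Erd\H{o}s--Simonovits stability theorem on $G'[S]$, whose edge count is at least $(1-1/r-o(1))|S|^2/2$ by the spectral bound $\rho^2\le 2e\,(1-1/r)$-type estimates. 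This yields $T_{n,r}$ with $d(G',T_{n,r})\le\varepsilon m/2$. The remaining $G$-edges outside are bounded by $m-e(T_{n,r})+o(m)=o(m)$, since $e(T_{n,r})\ge (1-o(1))m$ follows from $\rho(G')^2\le 2(1-1/r)e(G')$. For $r\in\{1,2\}$, the bound $\rho\ge\sqrt{(1-\delta)m}$ is close to $\rho^2\le m$ for triangle-poor graphs (a Nosal-type inequality). The standard near-equality analysis then gives $\sum_{uv}(x_u^2+x_v^2)$ tight, forcing a near complete bipartite $K_{U,V}$. When $r=1$ we use $\beta'(F)\ge2$ to exclude the star-like obstruction; this is handled via Theorem~\ref{thm1.3C}'s case.

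\textbf{Main obstacle.} The hard step is Step~1, i.e.\ the passage from size $m$ to a vertex set of size $O(\sqrt m)$. The difficulty is that $G$ may have many vertices, while removal and stability need a bounded-ratio setting. We would try progressive induction in the spirit of the proof of Theorem~\ref{thm1.3C}: iteratively delete a vertex $v$ with $x_v$ small, checking that $\rho^2/e$ does not decrease much. Either this process terminates on a core with $n=\Theta(\sqrt m)$ vertices, or the deleted low-weight edges total $o(m)$.
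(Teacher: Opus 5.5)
Your outline for $r\ge 3$ (pass to a core on $O(\sqrt m)$ vertices, apply removal, then use stability for $K_{r+1}$-free graphs) has the same architecture as the paper. However, both load-bearing steps are missing, and the core reduction is false in part (ii).

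\textbf{Step~1.} You need a single procedure that discards at most $\varepsilon m$ edges \emph{and} ends on $O(\sqrt m)$ vertices. Your ``either \dots\ or \dots'' sketch only checks that $\rho^2/e$ ``does not decrease much'', so it cannot bound the number of deletions. The missing idea is a monotone potential with a ceiling:
\begin{itemize}
\item The paper deletes \emph{light} edges ($x_ux_v\le 1/(8\sqrt{e})$), and each deletion \emph{raises} $\Phi=\rho/\sqrt e$ by at least $1/(5m)$ (Lemma~\ref{lem2.8C}(ii)).
\item Theorem~\ref{thm1.3C} caps $\Phi$ at $\sqrt{2(1-\frac1r)+o(1)}$. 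Since $\Phi(G)$ starts within $O(\delta)$ of the cap, fewer than $\varepsilon m$ deletions occur, and the final graph has no light edges.
\item For $r\ge 3$ we have $\rho^2\ge 1.1\,e$, so Lemma~\ref{lem2.8C}(iii) pins every Perron entry between constant multiples of $m^{-1/4}$. Hence the core has $O(\sqrt m)$ vertices.
\end{itemize}

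\textbf{Part (ii).} For $r\in\{1,2\}$ no such core exists in general. The star $K_{1,m}$ meets all hypotheses: $F$ is not a star, so $N_F(K_{1,m})=0$, and $\rho=\sqrt m$. Yet your $S=\{v:x_v\ge\eta/\sqrt\rho\}$ is the centre alone, $G[S]$ has no edges, and every edge meets $V\setminus S$. The target $K_{U,V}$ can be this unbalanced, and the removal lemma on $\Theta(m)$ vertices only gives an $o(m^2)$ error. After the same light-edge pruning, the paper therefore splits on $\max_v x_v$:
\begin{itemize}
\item If $\max_v x_v>\delta^{-1}h^{-1/4}$, Lemma~\ref{lem2.7C} gives $K_{U,V}$ directly, with no removal step.
\item Otherwise, the absence of light edges forces $\min_v x_v=\Omega(h^{-1/4})$, hence $O(\sqrt h)$ vertices. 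Removal, Erd\H{o}s--Frankl--R\"odl and the triangle-free stability Lemma~\ref{lem2.6C} then finish.
\end{itemize}
An unspecified ``Nosal-type near-equality analysis'' does not substitute for this.

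\textbf{Step~3 for $r\ge3$.} There are two further gaps.
\begin{itemize}
\item An $F$-free graph may still contain $K_{r+1}$. Before using Nikiforov's $\omega$-bound you must delete $o(n^2)$ edges via Erd\H{o}s--Frankl--R\"odl (Lemma~\ref{lem2.3C}); Erd\H{o}s--Stone does not give this.
\item More seriously, take a $K_{r+1}$-free core with $|S|\le C\sqrt m$, where $C$ is a large constant. You know neither $e(G'[S])\ge(1-\frac1r-o(1))|S|^2/2$ nor $\rho\ge(1-\frac1r-\delta)|S|$. The bounds $\rho^2\le 2(1-\frac1r)e$ and $\rho\le(1-\frac1r)|S|$ only give lower bounds on $e$ and $|S|$. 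What you need is $|S|\le(1+o(1))\sqrt{2m/(1-\frac1r)}$, so neither Erd\H{o}s--Simonovits nor Nikiforov's vertex-spectral stability applies yet.
\end{itemize}
The paper bridges this with the Li--Liu--Zhang edge-spectral stability (Lemma~\ref{lem2.5C}). It passes to an induced subgraph on at most $(1+\varepsilon)\sqrt{2m/(1-\frac1r)}$ vertices at a cost of $\varepsilon m$ edges, after which Lemma~\ref{lem2.4C} applies. Your Motzkin--Straus near-equality sketch would have to reprove such a statement, and it is not carried out.

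\textbf{The case $r=1$.} The hypothesis $\beta'(F)\ge 2$ is not there to exclude star-like $G$; a star is itself some $K_{U,V}$. Its role is to make $F$ a spanning subgraph of $F_0$, the graph $K_{\beta'(F),f-\beta'(F)}$ plus one edge inside the class of size $\beta'(F)$. Then $N_{F_0}(G)=O(N_F(G))$ and $\chi(F_0)=3$, which reduces this case to $r=2$.
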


From Theorem \ref{thm1.6C},
we can derive the following supersaturation stability result.
This  result generalizes the classical stability theorem of Erd\H{o}s \cite{Erdos-1967,Erdos-1968}
and Simonovits \cite{Simonovits1968},
extending its scope from the $F$-free condition to the more general setting of $N_F(G)=o(n^{f})$.

\begin{thm}[Supersaturation-stability] \label{thm1.5C}
Let $F$ be a graph on $f$ vertices with $\chi (F)=r+1\geq 3$.
For every $\varepsilon >0$,
there exist $\delta >0$ and $n_0$ such that if $G$
is a graph on $n\ge n_0$ vertices with $N_F(G)=o(n^{f})$ and
$e(G)\geq (1-\frac{1}{r}-\delta)\frac{n^2}{2}$,
then there exists a Tur\'{a}n graph $T_{n,r}$ such that $V(T_{n,r})=V(G)$ and  $d(G,T_{n,r})\leq \varepsilon n^2.$
\end{thm}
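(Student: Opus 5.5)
We derive Theorem \ref{thm1.5C} from Theorem \ref{thm1.6C} by passing from edges to the spectral radius. Let $m=e(G)$ and write $e(G)\ge (1-\frac1r-\delta)\frac{n^2}{2}$. Then $m=\Theta(n^2)$, since $m\le n^2/2$ and $1-\frac1r-\delta\ge \frac14$ once $\delta$ is small and $r\ge 2$. Hence $N_F(G)=o(n^f)=o(m^{f/2})$, and $m\to\infty$ as $n\to\infty$. Using the classical bound $\rho(G)\ge 2m/n$ together with $n^2\le 2m/(1-\frac1r-\delta)$, we obtain
$$\rho(G)\ge \frac{2m}{n}\ge \sqrt{2m\Big(1-\frac1r-\delta\Big)}.$$
This is exactly the hypothesis of Theorem \ref{thm1.6C}(i) when $r\ge 3$. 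When $r=2$ it reads $\rho(G)\ge\sqrt{(1-2\delta)m}$, which is the hypothesis of Theorem \ref{thm1.6C}(ii) with $2\delta$ in place of $\delta$.

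Given $\varepsilon>0$, apply Theorem \ref{thm1.6C} with $\varepsilon'=\varepsilon/4$. Let $\delta'$ be the resulting constant and set $\delta=\min(\delta',\delta'/2)$. For $r\ge 3$ we get a Turán graph $T_{n',r}$ with $V(T_{n',r})\subseteq V(G)$ and $d(G,T_{n',r})\le \varepsilon' m\le \varepsilon' n^2/2$. For $r=2$ we get disjoint sets $U,W$ with $d(G,K_{U,W})\le\varepsilon' m$. In either case we have a complete $r$-partite graph $H$ on parts $V_1,\dots,V_r\subseteq V(G)$ with $d(G,H)\le \varepsilon' n^2$.

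It remains to turn $H$ into a Turán graph on exactly $V(G)$, with almost balanced parts, at a cost of $O(\sqrt{\varepsilon'}+\sqrt\delta)n^2$ further edges. Let $s=\sum|V_i|\le n$. Then
$$e(H)\ge e(G)-\varepsilon' n^2\ge \Big(1-\frac1r-\delta-2\varepsilon'\Big)\frac{n^2}{2}.$$
On the other hand,
$$e(H)=\frac12\Big(s^2-\sum|V_i|^2\Big)\le \Big(1-\frac1r\Big)\frac{s^2}{2}-\frac12\sum_i\Big(|V_i|-\frac sr\Big)^2.$$
Comparing the two bounds gives $n-s=O(\delta+\varepsilon')n$ and $\sum_i(|V_i|-s/r)^2=O(\delta+\varepsilon')n^2$. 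So each part is within $O(\sqrt{\delta+\varepsilon'})n$ of $n/r$.

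Now distribute the $n-s$ leftover vertices, and move $O(\sqrt{\delta+\varepsilon'})n$ vertices between parts, so that the parts become balanced. The result is a Turán graph $T_{n,r}$ on $V(G)$. Each moved or added vertex changes at most $n$ edges, so $d(H,T_{n,r})=O(\sqrt{\delta+\varepsilon'})n^2$. By the triangle inequality for $d$, we get $d(G,T_{n,r})\le\varepsilon n^2$ once $\varepsilon'$ and $\delta$ are chosen sufficiently small in terms of $\varepsilon$. (If $\delta'$ is not small enough, shrinking $\delta$ is harmless, since the hypothesis only weakens.)

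The main obstacle is keeping the quantifiers straight. The $o(\cdot)$ hypothesis must be read as holding along a sequence with $n\ge n_0$, so that "sufficiently large $m$" in Theorem \ref{thm1.6C} is guaranteed by $m\ge n^2/8$. The balancing step also needs care, since it is the only place where the error worsens from linear to square-root order. Everything else is routine.
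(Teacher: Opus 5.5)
Your argument is correct, and its skeleton is the paper's: bound $\rho(G)\ge 2m/n$, invoke Theorem \ref{thm1.6C}, then repair the complete $r$-partite graph it returns into a balanced Tur\'{a}n graph on exactly $V(G)$. The differences are in the two auxiliary steps.

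To verify the spectral hypothesis, the paper first uses the removal lemma (Lemma \ref{lem2.1C}) and a Tur\'{a}n-type bound for $F$-free graphs to obtain the upper bound $m\le(1-\frac1r+2\delta)\frac{n^2}{2}$. It then shows $\rho(G)\ge(1-\frac1r-2\delta)n$ and uses that upper bound to rewrite this in terms of $m$. You observe that the lower bound $m\ge(1-\frac1r-\delta)\frac{n^2}{2}$ alone already gives $2m/n\ge\sqrt{(1-\frac1r-\delta)2m}$. This makes the detour unnecessary: you need neither the removal lemma nor any upper bound on $e(G)$.

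For the repair, the paper splits into cases. For $r\ge3$ it uses that Theorem \ref{thm1.6C}(i) already returns a \emph{balanced} $T_{n',r}$, shows $n'$ is close to $n$, and only adds the missing $n-n'$ vertices, so the loss is linear in the error. For $r=2$ it balances $U,V$ via $|U||V|\ge(1-\varepsilon')m$ and $|U|+|V|\le n$, paying a square root. Your identity $e(K_{V_1,\dots,V_r})=(1-\frac1r)\frac{s^2}{2}-\frac12\sum_i(|V_i|-\frac sr)^2$ treats all $r$ uniformly. It would even work if part (i) returned an unbalanced complete $r$-partite graph. The cost is a square-root loss also for $r\ge3$, which is harmless for this qualitative statement.

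One inconsistency needs fixing. You set $\varepsilon'=\varepsilon/4$, but your repair costs $O_r(\sqrt{\delta+\varepsilon'})\,n^2$, which exceeds $\varepsilon n^2$ for small $\varepsilon$ under that choice. Instead:
\begin{itemize}
\item take $\varepsilon'=c_r\varepsilon^2$ for a suitable constant $c_r>0$;
\item let $\delta'$ be the constant Theorem \ref{thm1.6C} gives for $\varepsilon'$;
\item put $\delta=\min\{\delta'/2,\,c_r\varepsilon^2,\,\frac14\}$.
\end{itemize}
Your closing sentence already anticipates this, so only the explicit choice needs to match it.
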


From Theorem \ref{thm1.6C},
we can also derive the following vertex-spectral supersaturation stability result,
originally established by Fang, Li, Lin, and Ma \cite{Fang2025+}, which generalizes
Nikiforov's spectral stability theorem in the $F$-free setting \cite{Nikiforov2009}.

\begin{thm}\emph{(Vertex-spectral supersaturation-stability \cite{Fang2025+})} \label{thm1.4C}
Let $F$ be a graph on $f$ vertices with $\chi (F)=r+1\geq 3$.
For every $\varepsilon >0$,
there exist $\delta >0$ and $n_0$ such that if $G$
is a graph on $n\ge n_0$ vertices with $N_F(G)=o(n^{f})$ and
$\rho(G)\geq (1-\frac{1}{r}-\delta)n$,
then there exists a Tur\'{a}n graph $T_{n,r}$ such that $V(T_{n,r})= V(G)$ and  $d(G,T_{n,r})\leq \varepsilon n^2.$
\end{thm}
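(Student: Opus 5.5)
We deduce Theorem \ref{thm1.4C} from Theorem \ref{thm1.6C}(i) for $r\geq 3$, and from Theorem \ref{thm1.6C}(ii) for $r=2$.

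\textbf{Step 1: translate hypotheses.} Let $m=e(G)$. Since $N_F(G)=o(n^{f})$ and we want $N_F(G)=o(m^{f/2})$, we first need $m=\Theta(n^2)$. Because $\rho(G)\le\sqrt{2m}$ (Stanley/Hong bound), $\rho(G)\geq(1-\frac1r-\delta)n$ gives
\[
m\geq \tfrac12(1-\tfrac1r-\delta)^2n^2 .
\]
Hence $m\to\infty$, $m=\Theta(n^2)$ (also $m\le n^2/2$), and $N_F(G)=o(n^f)=o(m^{f/2})$. For the spectral condition, combine $\rho\geq(1-\frac1r-\delta)n$ with an upper bound on $n$. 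Theorem \ref{thm1.3C} gives $\rho(G)\le\sqrt{(1-\frac1r+o(1))2m}$, so $(1-\frac1r-\delta)^2n^2\le(1-\frac1r+o(1))2m$. This yields
\[
m\ge \tfrac{(1-1/r-\delta)^2}{2(1-1/r)+o(1)}\,n^2 \approx (1-\tfrac1r-O(\delta))\tfrac{n^2}{2},
\]
so $e(G)$ is close to $e(T_{n,r})$. Then $\rho(G)\ge(1-\frac1r-\delta)n\ge \sqrt{(1-\frac1r-\delta')2m}$, because $2m(1-\frac1r)\le(1-\frac1r)n^2$ up to the needed direction: $\sqrt{(1-1/r)2m}\le (1-1/r)n\cdot\sqrt{2m/((1-1/r)n^2)}$. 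This requires $m\le(1-\frac1r+\eta)n^2/2$, which is not automatic. So instead I use the Wilf/Nikiforov-type inequality under few copies: $\rho(G)\le(1-\frac1r+o(1))n$, which follows from Theorem \ref{thm1.3C} applied... this is not directly available. I therefore handle the issue by cases. If $m\le(1-\frac1r+\eta)n^2/2$, the spectral condition of Theorem \ref{thm1.6C} holds with $\delta'=O(\delta+\eta)$. If $m>(1-\frac1r+\eta)n^2/2$, the density exceeds the Turán density, so the Erdős–Simonovits supersaturation theorem gives $\Omega(n^f)$ copies of $F$, which is a contradiction. (For $r=2$, the condition $\sqrt{(1-\delta)m}$ is the same as $\sqrt{(1-\frac12-\delta/2)2m}$.)

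\textbf{Step 2: apply Theorem \ref{thm1.6C}.} We obtain either a Turán graph $T_{n',r}$ with $V(T_{n',r})\subseteq V(G)$ and $d(G,T_{n',r})\le\varepsilon' m\le\varepsilon' n^2$, or, for $r=2$, disjoint sets $U,V$ with $d(G,K_{U,V})\le\varepsilon' m$. Since $e(G)\ge(1-\frac1r-O(\delta))n^2/2$ while $e(T_{n',r})\le(1-\frac1r)n'^2/2$, we get $n'\ge(1-O(\sqrt{\delta}+\sqrt{\varepsilon'}))n$. For $r=2$, the analogous argument gives $|U|,|V|=(\frac12\pm o_{\varepsilon',\delta}(1))n$: $|U||V|\ge m-\varepsilon'm$ and $|U|+|V|\le n$ force near-balance.

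\textbf{Step 3: extend to a spanning Turán graph.} Distribute the remaining $n-n'$ vertices, at most $O(\sqrt{\delta}+\sqrt{\varepsilon'})n$ of them, among the parts so that the parts stay balanced. This changes at most $(n-n')n$ edges, and rebalancing sizes moves $o(n)$ vertices, costing a further $o(n^2)$. Choosing $\varepsilon',\delta$ small relative to $\varepsilon$ then gives $d(G,T_{n,r})\le\varepsilon n^2$.

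The main obstacle is Step 1: converting the vertex-spectral bound into the edge-spectral bound $\rho\ge\sqrt{(1-\frac1r-\delta')2m}$, which needs the upper bound $m\lesssim(1-\frac1r)n^2/2$. I plan to get that bound from supersaturation, i.e.\ the fact that density above $1-\frac1r$ forces $\Omega(n^f)$ copies of $F$.
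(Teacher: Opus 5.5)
Your proof is correct and is essentially the paper's argument. The paper proves Theorem \ref{thm1.4C} by using Theorem \ref{thm1.3C} to get $e(G)\ge(1-\frac1r-\delta')\frac{n^2}{2}$ and then citing Theorem \ref{thm1.5C}, whose proof contains your remaining steps: an upper bound on $e(G)$ (removal lemma plus the Tur\'an density of $F$, equivalent to your Erd\H{o}s--Simonovits supersaturation step), conversion to the edge-spectral condition, Theorem \ref{thm1.6C}, and extension of $T_{n',r}$ to $T_{n,r}$ as in your Steps 2--3. You inline that second proof, and you usefully make explicit the check $m=\Theta(n^2)$ via $\rho\le\sqrt{2m}$, which is needed to apply Theorem \ref{thm1.3C} and which the paper leaves tacit.
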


It is worth noting that Theorems 1.4 and 1.5 focus exclusively on dense graphs, whereas Theorem 1.3 extends its applicability to both dense and sparse graphs with arbitrary edge densities.
To illustrate this distinction, consider the star \(K_{1,m}\),
which is a \(C_{\ell}\)-free graph with \(m+1\) vertices and \(m\) edges.
Its spectral radius is given by \(\rho(K_{1,m})=\sqrt{m}\),
while the edge-to-vertex ratio \( e(K_{1,m})/|K_{1,m}| \) is strictly less than 1.

\subsection{An edge-spectral supersaturation for color-critical graphs}

The study of counting problems related to spectral conditions was initiated by Bollob\'{a}s and Nikiforov \cite{Bollobas2007},
who established the inequality
$$\rho(G)\leq (r+1)N_{K_{r+1}}(G)+\sum_{s=2}^{r}(s-1)N_{K_s}(G)\rho^{r+1-s}(G)$$
for $r\geq 2$.
In particular, for $r=2$, this yields
\(N_{K_3}(G)\geq \frac{1}{3}\rho(G)\left(\rho^2(G) - m\right)\).
In 2021, Ning and Zhai \cite{NZ2021} demonstrated that if $G$ is an $m$-edge graph with $\rho(G)\geq \sqrt{m}$,
then \(N_{K_3}(G)\geq \lfloor \frac{\sqrt{m}-1}{2}\rfloor\), unless $G$ is a complete bipartite graph (possibly with isolated vertices), and this bound is the best possible.
Recently,
Li, Liu, and Zhang \cite{Li2025+B} further proved that $N_{K_{r+1}}(G)=\Omega_r(m^{\frac{r-1}{2}})$ for
every graph $G$ with $m$ edges and $\rho^2(G)>(1\!-\!\frac{1}{r})2m$.

Motivated by the aforementioned results, we aim to study the edge-spectral supersaturation for a general color-critical graph.
Let $F$ be a color-critical graph of order $f$ with $\chi(F)=r+1$.
Recall that $c(n, F)$ denotes the minimum number of copies of $F$ in the graph obtained by adding a single edge to $T_{n,r}$.
By Lemma \ref{lem2.2C} (to be presented later), we observe that $c(n, F)=\Theta(n^{f-2})$,
with $\alpha_F$ being precisely the coefficient of the leading item $n^{f-2}$ in $c(n, F)$.
The next main finding of this article presents an asymptotically tight result for a color-critical graph.

\begin{thm}\label{thm1.8C}
Let $F$ be a color-critical graph of order $f$ with $\chi(F)=r+1\geq 4$.
For sufficiently large $m$,
if $G$ is a graph of size $m$ with $\rho(G)\geq \sqrt{\big(1-\frac{1}{r}\big)2m}$, then we have
$$N_F(G)
\geq \Big(\big(\frac{2r}{r-1}\big)^{\frac{f-2}{2}}\alpha_F\!-\!o(1)\Big)m^{\frac{f-2}{2}},$$
unless $G$ is a regular complete $r$-partite graph.
Furthermore, this bound is asymptotically tight,
specifically, the leading term $m^{\frac{f-2}{2}}$ and its constant coefficient cannot be improved.
\end{thm}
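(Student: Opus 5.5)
The plan is to argue by contradiction and use stability to reduce to a near-Turán structure, then count copies of $F$ through a single "extra" edge or a local defect, as in Mubayi's argument behind Theorem \ref{thm1.1C}.

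Suppose $G$ has $m$ edges, $\rho(G)\ge\sqrt{(1-\frac1r)2m}$, $G$ is not a regular complete $r$-partite graph, and
\[
N_F(G)<\Big(\big(\tfrac{2r}{r-1}\big)^{\frac{f-2}{2}}\alpha_F-\eta\Big)m^{\frac{f-2}{2}}
\]
for some fixed $\eta>0$. In particular $N_F(G)=o(m^{f/2})$.

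\textbf{Step 1: reduce to a dense core.} Let $x$ be the Perron vector. The $\varepsilon$-dense subgraph device from the abstract is meant for this step. Delete low-weight vertices (those with $x_v$ tiny, or with degree below $\varepsilon\sqrt m$). Then use $\rho^2 x_v=\sum_{u\sim v}\sum_{w\sim u}x_w$ together with $\sum_v d_v=2m$ to show the following. Almost all of the quantity $\rho^2\ge(1-\frac1r)2m$ is carried by a subgraph $H$ on $n=(1+o(1))\sqrt{2rm/(r-1)}$ vertices with $e(H)\ge(1-o(1))m$ and minimum degree linear in $n$.

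\textbf{Step 2: apply stability.} Theorem \ref{thm1.6C}(i) (here $r\ge3$) gives a partition $V_1,\dots,V_r$ of the core. It has $|V_i|=(1+o(1))n/r$ and differs from $T_{n,r}$ in $o(m)$ edges. A cleaning step then does three things:
\begin{itemize}
\item move each vertex to the part where it has the fewest neighbours;
\item discard the $o(n)$ vertices having $\Omega(n)$ neighbours inside their own part;
\item record that every remaining vertex is adjacent to all but $o(n)$ vertices of each other part.
\end{itemize}

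\textbf{Step 3: locate a bad edge and count.} Next, show that $G$ contains an edge $uv$ inside some part $V_i$ with both endpoints "good". Counting as in Lemma \ref{lem2.2C} then produces at least $(\alpha_F-o(1))n^{f-2}$ copies of $F$ through $uv$: embed the colour-critical edge of $F$ as $uv$, and the remaining $f-2$ vertices greedily into common neighbourhoods. Since $n^{f-2}=(\frac{2r}{r-1})^{\frac{f-2}{2}}m^{\frac{f-2}{2}}(1+o(1))$, this is a contradiction.

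The main obstacle is proving that such an edge exists. Suppose there is none, so every edge of $G$ is incident to a bad vertex, lies between parts, or meets the deleted set. I would then derive
\[
\rho^2\le(1-\tfrac1r)2m,
\]
with equality only for regular complete $r$-partite graphs. This would be done via a weighted estimate:
\begin{itemize}
\item write $\rho^2=\sum_v x_v\sum_{u\sim v}\sum_{w\sim u}x_w$;
\item compare the $r$-partite part of $G$ with Nikiforov's bound for $K_{r+1}$-free graphs;
\item show that each bad or deleted vertex costs a definite amount in $\rho^2-(1-\frac1r)2m$ relative to the $x$-weights it receives.
\end{itemize}
The hypothesis $r\ge4$ in the theorem's notation, i.e.\ $\chi(F)\ge4$, should enter exactly here. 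When $r\ge3$, a vertex adjacent to many vertices of its own part creates $\Omega(n^{f-2})$ copies directly, or else lowers the spectral radius below the threshold. For triangles this fails; compare the $\sqrt m$-order bound of Ning--Zhai.

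\textbf{Step 4: tightness.} The construction is $T_{n,r}$ with $n$ chosen so that $e(T_{n,r})$ is about $m$. Add one edge inside a part, adjusting by deleting a few cross edges if necessary to keep exactly $m$ edges while keeping $\rho\ge\sqrt{(1-\frac1r)2m}$. This graph has $(\alpha_F+o(1))n^{f-2}$ copies of $F$, which matches the bound.
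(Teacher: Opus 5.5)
\textbf{There is a genuine gap in Step~3.} Your skeleton matches the paper's: stability, a max-cut partition, counting copies of $F$ through a class-edge between good vertices, and $T_{n,r}$ plus one edge for tightness. But the step you call the main obstacle is the heart of the theorem, and you give no mechanism for it. The statement sits exactly at Nikiforov's threshold and has an equality case, so no $o(1)$ loss is affordable. Your one-shot core in Step~1 keeps the spectral condition only approximately. Every discarded or bad vertex must therefore be paid for exactly in Step~3, and ``each bad or deleted vertex costs a definite amount'' is a hope, not an estimate. The missing idea is the paper's $\varepsilon$-dense subgraph iteration (Definition~\ref{def5.1}): repeatedly pass to a subgraph in which $\Phi=\rho/\sqrt{e}$ grows by at least $\varepsilon_0$ times the fraction of edges removed. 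Theorem~\ref{thm1.3C} caps $\Phi$, so fewer than $\varepsilon_0 m$ edges are removed in total. The terminal graph $H$ then has three properties. It satisfies $\Phi(H)\ge\Phi(G)$ exactly, strictly if anything was removed. It has no light edges, so Lemma~\ref{lem2.8C}(iii) gives minimum degree $\Omega(\sqrt h)$; with stability this yields $|H|\le(1+\varepsilon_1)\psi$, where $\psi=\sqrt{2h/(1-\frac1r)}$. And it is locally $\Phi$-maximal: deleting a vertex $u$ with $d_H(u)\le\psi$ cannot raise $\Phi$ by $\varepsilon_0 d_H(u)/h$.

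What makes a low-degree vertex removable is the pointwise bound $2hx_u^2\le(1-2\varepsilon_2)d_H(u)$ for every $u$ with $d_H(u)\le(1-\frac1r-4\varepsilon_2)n$ (Claim~\ref{claim5.12C}). This rests on $x_{\max}^2\le(1+\varepsilon_2)/\psi$. That bound in turn needs the Perron vector to be nearly constant on good vertices (Claim~\ref{claim5.11C}, via near-identical neighbourhoods) and $|H|\approx\psi$. Neither appears in your proposal. Both are awkward in the original $G$, where $|G|$ need not be $O(\sqrt m)$ and a max-cut partition of $V(G)$ need not have parts of size about $n/r$. Combined with $\rho(H)(1-2x_u^2)\le\rho(H-u)(1-x_u^2)$, the bound makes $H-u$ an $\varepsilon_0$-dense subgraph, contradicting terminality (Claim~\ref{claim5.13C}). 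Only after this does a single class-edge yield $(1-\varepsilon)c(n,F)$ copies. If no class-edge exists, $H$ is $r$-partite, and Theorem~\ref{thm1.7C} together with $\Phi(H)\ge\Phi(G)$ forces $H=G$ to be regular complete $r$-partite.

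Two smaller points. In Step~4, deleting cross edges does not help: each lowers $\rho$ by about $2/n$ but the threshold by only about $1/n$. None is needed, since tightness only requires $m=e(T_{n,r})+1$ with $r\mid n$; regularity gives $\rho(T_{n,r})=\sqrt{(1-\frac1r)2e(T_{n,r})}$ exactly. Also, $r\ge3$ is not about the order of magnitude. For $K_3$, the sharp Ning--Zhai bound $\lfloor(\sqrt m-1)/2\rfloor$ has exactly the order $m^{(f-2)/2}=\sqrt m$; only the constant is smaller. The issue is balance: for $r=2$ the near-extremal graphs include unbalanced $K_{s,t}$. So neither Theorem~\ref{thm1.6C}(i) nor Lemma~\ref{lem2.8C}(iii), which needs $2(1-\frac1r)>1$, is available.
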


From Theorem \ref{thm1.8C}, we can directly derive the following spectral extremal result for color-critical graphs established by Li, Liu and Zhang \cite{Li2025+C}.

\begin{cor}[\cite{Li2025+C}]\label{cor1.9C}
Let $F$ be a color-critical graph of order $f$ with  $\chi(F)=r+1\geq 4$.
For sufficiently large $m$,
if $G$ is an $F$-free graph with $m$ edges, then we have
 \begin{align*}
\rho(G)\leq \sqrt{\big(1\!-\!\frac{1}{r}\big)2m}.
\end{align*}
Equality holds if and only if \(G\) is a regular complete \(r\)-partite graph.
\end{cor}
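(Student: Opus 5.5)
Corollary \ref{cor1.9C} should follow from Theorem \ref{thm1.8C} by contradiction. Let $F$ be color-critical with $\chi(F)=r+1\geq 4$, and let $G$ be an $F$-free graph with $m$ edges, $m$ sufficiently large. Suppose $\rho(G)\geq\sqrt{(1-\frac1r)2m}$ and $G$ is not a regular complete $r$-partite graph. Then Theorem \ref{thm1.8C} gives
$$N_F(G)\geq\Big(\big(\tfrac{2r}{r-1}\big)^{\frac{f-2}{2}}\alpha_F-o(1)\Big)m^{\frac{f-2}{2}}.$$
By Lemma \ref{lem2.2C} we have $c(n,F)=\Theta(n^{f-2})$, so its leading coefficient satisfies $\alpha_F>0$. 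Also $f\geq r+1\geq 4$, so $m^{(f-2)/2}\to\infty$. Hence the right-hand side is positive, indeed at least $1$, once $m$ is large. This means $G$ contains a copy of $F$, contradicting that $G$ is $F$-free. Therefore every $F$-free $G$ either satisfies $\rho(G)<\sqrt{(1-\frac1r)2m}$ or is a regular complete $r$-partite graph.

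Next we check that a regular complete $r$-partite graph is an admissible extremal graph and attains equality. Take $G=T_{rt,r}$ with $m=\binom r2t^2$, possibly with isolated vertices. It is $r$-colorable, so it is $F$-free because $\chi(F)=r+1$. It is $(r-1)t$-regular, so
$$\rho(G)=(r-1)t=\sqrt{\tfrac{(r-1)^2}{\binom r2}\,m}=\sqrt{\big(1-\tfrac1r\big)2m}.$$
So equality holds for these graphs.

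Finally, the "only if" direction of the equality statement. If $G$ is $F$-free with equality, then $\rho(G)\geq\sqrt{(1-\frac1r)2m}$, so by the first paragraph $G$ must be a regular complete $r$-partite graph. Combining the two directions gives the bound together with its characterization of equality.

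The only step needing care is confirming $\alpha_F>0$, so that the lower bound from Theorem \ref{thm1.8C} is eventually positive. This follows because adding an edge inside a part of $T_{n,r}$ creates $\Theta(n^{f-2})$ copies of $F$: embed the critical edge of $F$ on the added edge, and place the remaining $f-2$ vertices according to an $r$-coloring of $F$ minus that edge, one part of $T_{n,r}$ per color class.
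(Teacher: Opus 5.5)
Your proposal is correct and follows essentially the same route as the paper, which derives Corollary~\ref{cor1.9C} directly from Theorem~\ref{thm1.8C}: the lower bound on $N_F(G)$ is eventually at least $1$ since $\alpha_F>0$ by Lemma~\ref{lem2.2C}. The regular complete $r$-partite graphs are $F$-free with $\rho^2(G)=(1-\frac1r)2m$, as the paper notes at the start of the proof of Theorem~\ref{thm1.8C}, and your treatment of both directions of the equality case is complete.
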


A straightforward calculation yields $c(n,K_{r+1})=(1+o(1))(\frac nr)^{r-1}$.
Since $\alpha_F$ is the coefficient of the leading item in $c(n,F)$,
it follows that $\alpha_{K_{r+1}}=(\frac1r)^{r-1}$.
Building on the foundation in Theorem \ref{thm1.8C},
we improve the lower bound for \(N_{K_{r+1}}(G)\) established by Li, Liu, and Zhang \cite{Li2025+B},
from \(\Omega_r(m^{\frac{r-1}{2}})\) to an asymptotically tight value.

\begin{cor}
For $r\geq 3$ and sufficiently large $m$,
if $G$ is a graph with $m$ edges such that $\rho(G)\geq \sqrt{\big(1-\frac{1}{r}\big)2m},$ then we have
$$N_{K_{r+1}}(G)
\geq \Big(\big(\frac{2}{r(r\!-\!1)}\big)^{\frac{r\!-\!1}{2}}\!-\!o(1)\Big)m^{\frac{r\!-\!1}{2}},$$
unless $G$ is a regular complete $r$-partite graph.
Furthermore, this bound is asymptotically tight.
\end{cor}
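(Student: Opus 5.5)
This is the special case $F=K_{r+1}$ of Theorem \ref{thm1.8C}, so the plan is to apply that theorem with $F=K_{r+1}$, which has $\chi(K_{r+1})=r+1\geq 4$ exactly because $r\geq 3$. Then we compute the constant explicitly and check tightness. First, $K_{r+1}$ is color-critical: deleting any edge leaves a graph that is $r$-colorable, since its two endpoints can share a color. Its order is $f=r+1$, so $\frac{f-2}{2}=\frac{r-1}{2}$. Hence for sufficiently large $m$, every graph with $m$ edges and $\rho(G)\geq\sqrt{(1-\frac1r)2m}$ that is not a regular complete $r$-partite graph satisfies
$$N_{K_{r+1}}(G)\geq \Big(\big(\tfrac{2r}{r-1}\big)^{\frac{r-1}{2}}\alpha_{K_{r+1}}-o(1)\Big)m^{\frac{r-1}{2}}.$$

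The only real work is to identify $\alpha_{K_{r+1}}$. Take $T_{n,r}$ with parts of sizes $\lfloor n/r\rfloor$ or $\lceil n/r\rceil$, and add one edge $uv$, necessarily inside some part $V_i$. A copy of $K_{r+1}$ in $T_{n,r}+uv$ must use $uv$, because $T_{n,r}$ itself is $K_{r+1}$-free. The other $r-1$ vertices are pairwise adjacent, so they lie in distinct parts. None of them lies in $V_i$, since such a vertex would have to be adjacent to $u$ or $v$, which are in the same part. So the copies containing $uv$ correspond exactly to transversals of the remaining $r-1$ parts. Their number is $\prod_{j\neq i}|V_j|=(1+o(1))(n/r)^{r-1}$, and choosing $uv$ in the largest part minimizes it. Thus $\alpha_{K_{r+1}}=r^{-(r-1)}$, matching the remark before the corollary. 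Substituting gives
$$\Big(\frac{2r}{r-1}\Big)^{\frac{r-1}{2}}\Big(\frac1r\Big)^{r-1}=\Big(\frac{2r}{(r-1)r^2}\Big)^{\frac{r-1}{2}}=\Big(\frac{2}{r(r-1)}\Big)^{\frac{r-1}{2}},$$
which is the stated constant.

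Asymptotic tightness is inherited from the tightness assertion of Theorem \ref{thm1.8C} specialized to $F=K_{r+1}$. To make it concrete, take $T_{n,r}$ with $r\mid n$ and add one edge. This graph has $m=(1-\frac1r)\frac{n^2}{2}+1$ edges. Adding an edge to the regular graph $T_{n,r}$ makes it connected with $\rho>(1-\frac1r)n$, so $\rho^2>(1-\frac1r)^2n^2=(1-\frac1r)2(m-1)$. The $+1$ edge needs care here, and the tightness construction of Theorem \ref{thm1.8C} presumably already handles it, for instance by checking that the perturbation raises $\rho^2$ by $\Theta(1)$ rather than $O(1/n)$. This graph contains exactly $(n/r)^{r-1}$ copies of $K_{r+1}$. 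Since $n=(1+o(1))\sqrt{2rm/(r-1)}$, that count equals $(1+o(1))\big(\frac{2}{r(r-1)}\big)^{\frac{r-1}{2}}m^{\frac{r-1}{2}}$.

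The only step needing any care is this tightness check; I would simply defer it to the corresponding part of Theorem \ref{thm1.8C}.
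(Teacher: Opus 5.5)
Your proposal is correct and follows the paper's argument. The corollary is Theorem~\ref{thm1.8C} with $F=K_{r+1}$: the count $c(n,K_{r+1})=(1+o(1))(n/r)^{r-1}$ gives $\alpha_{K_{r+1}}=r^{-(r-1)}$, and tightness is inherited from the construction $T_{n^\star,r}$ plus one class-edge.

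On the step you deferred: "$\Theta(1)$" is not quite enough. The construction needs $\rho$ to rise by more than about $1/n^\star$, that is, $\rho^2$ must rise by more than $2(1-\frac1r)$. The paper obtains this from Lemma~\ref{first-key}(i), which gives $\rho(G^\star)=\rho(T_{n^\star,r})+\frac{2}{n^\star}+O\big(\frac{1}{{n^\star}^2}\big)$.
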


\section{Preliminaries}

\subsection{Estimating the value of $c(n,F)$}

Let $F$ be a color-critical graph with $\chi (F)=r+1\geq3$,
In this context, we present the exact formula for $c(n,F)$ as derived by Mubayi~\cite{Mubayi2010},
and provide a detailed, self-contained proof for convenience and clarity in later applications.

By the definition of \( c(n,F) \),
there exists a graph \( T_{n,r}^* \) constructed by adding a single extra edge \( uv \) within one partition class of the Tur\'{a}n graph \( T_{n,r} \),
such that \( N_F(T_{n,r}^*) = c(n,F) \).
This partition class is referred to as the first partition class of \( T_{n,r} \).

An \emph{edge-preserving injection} from a graph \( F = (V(F), E(F)) \) to a graph \( G = (V(G), E(G)) \)
is an injective map \( \varphi: V(F) \to V(G) \) such that for every edge \(xy \in E(F) \), its image \( \varphi(x)\varphi(y) \) is an edge of \( G \).
Consequently,
\[
N_F(T_{n,r}^*) = \frac{\#\bigl\{\text{edge-preserving injections } V(F) \to V(T_{n,r}^*)\bigr\}}{\mathrm{Aut}(F)}.
\]

An edge \( xy \in E(F) \) is called \emph{good} if \( \chi(F\!-\!xy) = r \).
For any good edge $xy$, every proper $r$-coloring of $F\!-\!xy$ assigns the same color to $x$ and $y$.
Fix a proper \(r\)-coloring \(\chi_{xy}\) of \( F\!-\!xy\) in which both \( x \) and \( y \) receive color 1.
For each $i\in [r]$ where $[r]:=\{1,\ldots, r\}$, we define
$$ \tau_{xy}^{(i)} := \bigl|\{z \in V(F) \setminus \{x,y\} : \chi_{xy}(z) = i\}\bigr|.$$

Every edge-preserving injection $\varphi\colon V(F)\!\to\!V(T_{n,r}^*)$ is produced as follows:
First, choose a good edge $xy$ and map $\{x,y\}$ onto $\{u,v\}$ in one of the two possible ways,
namely, either $\varphi(x)=u$ or $\varphi(x)=v$.
Then, embed the remaining vertices of $F$ such that adjacent vertices never land in the same partition class of $T_{n,r}^*\!-\!uv$.
This embedding is determined by $\chi_{xy}$: vertices of $F$ with color $i$ must be placed inside the $i$-th partition class of $T_{n,r}^*\!-\!uv$.
For $i=1$, the vertices must avoid the already occupied vertices $u$ and $v$, leaving $(\frac nr\!-\!2)_{\tau_{xy}^{(1)}}$ possibilities.
For $i\neq 1$, there are $(\frac{n}{r})_{\tau_{xy}^{(i)}}$ choices.
Here, the falling factorial \((\frac{n}{r})_{\tau_{xy}^{(i)}}\) is defined as
\begin{align}\label{align-00H}
\Big(\frac{n}{r}\Big)_{\tau_{xy}^{(i)}}=\frac{n}{r}\big(\frac{n}{r}\!-\!1\big)\big(\frac{n}{r}\!-\!2\big)\cdots\big(\frac{n}{r}\!-\!\tau_{xy}^{(i)}\!+\!1\big).
\end{align}
We use $\text{Cont}_{T_{n,r}^*}(\chi_{xy})$ to represent the contribution of the \(r\)-coloring \(\chi_{xy}\)
to the total number of edge-preserving injections from graph $F$ to graph $T_{n,r}^*$.
Then, the contribution is given by:
\begin{align}\label{align-00G}
\text{Cont}_{T_{n,r}^*}(\chi_{xy})=2\,\big(\frac{n}{r}\!-\!2\big)_{\tau_{xy}^{(1)}}
\prod_{i=2}^{r}\Big(\frac{n}{r}\Big)_{\tau_{xy}^{(i)}}.
\end{align}

Summing over all good edges and accounting for the automorphisms of \( F \), we obtain the explicit formula for \( c(n,F) \):
\begin{align}\label{align-01F}
c(n,F) = \frac{1}{\mathrm{Aut}(F)} \sum_{\substack{xy \in E(F) \\ xy \text{ is good}}} \sum_{\chi_{xy}} 2\big(\tfrac{n}{r}\!-\!2\big)_{\tau_{xy}^{(1)}} \prod_{i=2}^{r} \big(\tfrac{n}{r}\big)_{\tau_{xy}^{(i)}}.
\end{align}

Furthermore, Mubayi \cite{Mubayi2010} provided a useful estimate for $c(n,F)$ as follows:

\begin{lem}[Mubayi \cite{Mubayi2010}]\label{lem2.2C}
Given a color-critical graph $F$ of order $f$ with $\chi (F)=r+1\geq3$,
there exist constants $\alpha_F>0$ and $\beta_F>0$ such that for sufficiently large $n$,
$$\big|c(n, F)-\alpha_F n^{f-2}\big| < \beta_F n^{f-3}.$$
In particular, we have $\frac{1}{2}\alpha_F  n^{f-2} < c(n, F) < 2 \alpha_F n^{f-2}$.
\end{lem}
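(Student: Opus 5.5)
The plan is to read off the leading term directly from the explicit formula \eqref{align-01F}. That formula expresses $c(n,F)$ as a finite sum, independent of $n$ in the number of terms, of expressions
\[
\frac{2}{\mathrm{Aut}(F)}\big(\tfrac{n}{r}-2\big)_{\tau_{xy}^{(1)}}\prod_{i=2}^{r}\big(\tfrac{n}{r}\big)_{\tau_{xy}^{(i)}}.
\]
The sum runs over good edges $xy$ and over the proper $r$-colorings $\chi_{xy}$ of $F-xy$ with $\chi_{xy}(x)=\chi_{xy}(y)=1$.

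For each fixed coloring, $\sum_{i=1}^r\tau_{xy}^{(i)}=f-2$. Hence each summand is a polynomial in $n$ of degree exactly $f-2$, with leading coefficient $\frac{2}{\mathrm{Aut}(F)}r^{-(f-2)}$; this follows by expanding each falling factorial $(\tfrac nr - c)_k=(\tfrac nr)^k+O(n^{k-1})$. Summing, $c(n,F)$ agrees with a polynomial $P(n)=\alpha_F n^{f-2}+\sum_{j<f-2}a_jn^j$, where
\[
\alpha_F=\frac{2\,M_F}{\mathrm{Aut}(F)\,r^{f-2}},
\]
with $M_F$ the total number of pairs (good edge, coloring). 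The coefficients $a_j$ depend only on $F$ and $r$.

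Positivity of $\alpha_F$ comes from color-criticality. Since $F$ is color-critical, there is at least one good edge $xy$. Then $F-xy$ is $r$-colorable, and as noted in the text every such coloring gives $x,y$ the same color; after permuting colors there is at least one coloring with $\chi_{xy}(x)=\chi_{xy}(y)=1$. Thus $M_F\ge1$ and $\alpha_F>0$.

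For the error term, I would set $\beta_F:=1+\sum_{j<f-2}|a_j|$. For $n\ge1$ this gives $|c(n,F)-\alpha_Fn^{f-2}|\le\sum_j|a_j|n^j<\beta_Fn^{f-3}$. The "in particular" clause then follows by taking $n>2\beta_F/\alpha_F$, so that $\beta_Fn^{f-3}<\tfrac12\alpha_Fn^{f-2}$.

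The main obstacle is that \eqref{align-01F} is written as if $r\mid n$. When $r\nmid n$, the class sizes of $T_{n,r}$ are $\lfloor n/r\rfloor$ or $\lceil n/r\rceil$, and $c(n,F)$ is a minimum over which class receives the extra edge. I would handle this by replacing $n/r$ in each falling factorial with the actual class size $n_i\in\{\lfloor n/r\rfloor,\lceil n/r\rceil\}$. Since $|n_i-n/r|<1$, each product still equals $r^{-(f-2)}n^{f-2}+O(n^{f-3})$, with an $O$-constant bounded uniformly by a function of $f$ alone; for instance, bound each factor $n_i-c$ by $n/r+O(1)$ with $O(1)\le f$. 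Taking the minimum over finitely many such expressions preserves this estimate. So the same $\alpha_F$ works, and $\beta_F$ is enlarged to absorb the uniform constant.

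Finally, I would check that $\chi(F)=r+1\ge3$ guarantees $f\ge3$. This ensures that $n^{f-3}$ is a genuine lower-order term.
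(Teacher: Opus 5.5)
Your proposal is correct and follows essentially the route the paper relies on. The paper derives the explicit formula \eqref{align-01F} and then cites Mubayi for the estimate; that estimate is exactly your extraction of the leading coefficient $\alpha_F=2M_F/(\mathrm{Aut}(F)\,r^{f-2})$ (positive by color-criticality), with the lower-order coefficients absorbed into $\beta_F$, and your treatment of $r\nmid n$ (actual class sizes, minimum over the class receiving the extra edge) fills in a point the paper leaves implicit by writing $n/r$ throughout.
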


\subsection{Some stability results for $F$-free graphs}

Koml\'{o}s and Simonovits \cite[Theorem 2.9]{Komlos1996} demonstrated the following classical result, known as the \emph{Graph Removal Lemma}.

\begin{lem}[Koml\'{o}s and Simonovits \cite{Komlos1996}]\label{lem2.1C}
Let $F$ be a fixed graph with $f$ vertices.
Suppose that an $n$-vertex graph $G$ contains $o(n^{f})$ copies of $F$.
Then, there exists a set of edges in $G$ of size $o(n^2)$ such that their removal from
$G$ results in an $F$-free graph.
\end{lem}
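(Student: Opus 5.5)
The approach is the standard regularity-lemma argument. Its quantitative form is: for every $\eta>0$ there are $c=c(\eta,F)>0$ and $n_1$ such that every $n$-vertex graph with $n\ge n_1$ and fewer than $c\,n^{f}$ copies of $F$ can be made $F$-free by deleting at most $\eta n^2$ edges. Since $N_F(G)=o(n^{f})$, for each fixed $\eta$ the hypothesis eventually beats $c\,n^f$, and this gives the $o(n^2)$ conclusion.

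\emph{Step 1: regularize and clean.} Fix $\eta>0$ and choose $d=\eta/4$ and $\epsilon\ll d^{e(F)}/f$ (also $\epsilon<\eta/4$). Apply Szemer\'edi's regularity lemma to $G$ with parameter $\epsilon$ and a lower bound $m_0\ge 4/\eta$ on the number of parts. This gives an equipartition $V_0\cup V_1\cup\dots\cup V_k$ with $|V_0|\le\epsilon n$ and $m_0\le k\le M(\epsilon)$, in which all but $\epsilon k^2$ pairs are $\epsilon$-regular. Form $G'$ by deleting:
\begin{itemize}[leftmargin=*]
\item edges meeting $V_0$ (at most $\epsilon n^2$);
\item edges inside some $V_i$ (at most $n^2/k\le \eta n^2/4$);
\item edges in irregular pairs (at most $\epsilon n^2$);
\item edges in regular pairs of density below $d$ (at most $d n^2/2$).
\end{itemize}
Altogether at most $\eta n^2$ edges are removed.

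\emph{Step 2: a copy of $F$ in $G'$ forces many copies in $G$.} Suppose $G'$ still contains a copy of $F$, with vertex $x\in V(F)$ lying in cluster $V_{\phi(x)}$. Because edges inside clusters were removed, $\phi$ is a homomorphism from $F$ to the reduced graph $R$. Here $R$ has an edge $ij$ whenever $(V_i,V_j)$ is $\epsilon$-regular with density at least $d$. Apply the counting (embedding) lemma to $\phi$: embed the vertices $x_1,\dots,x_f$ of $F$ greedily into their prescribed clusters.

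At each stage, maintain for every unembedded vertex $x$ a candidate set $C_x\subseteq V_{\phi(x)}$ of size at least $(d-\epsilon)^{\deg(x)}|V_{\phi(x)}|$. By $\epsilon$-regularity, all but at most $f\epsilon|V_{\phi(x_j)}|$ vertices of $C_{x_j}$ shrink each later neighbour's candidate set by a factor of at least $(d-\epsilon)$. We also exclude the at most $f$ vertices already used, which handles several vertices of $F$ sharing a cluster. This yields at least
\[
\Big(\tfrac12 (d-\epsilon)^{e(F)}\,\tfrac{n}{2k}\Big)^{f}\ \ge\ c\, n^{f}
\]
labelled copies, with $c$ depending only on $\eta$ and $F$ because $k\le M(\epsilon)$. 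Dividing by $f!$ still leaves $\Omega(n^f)$ copies of $F$ in $G\supseteq G'$, contradicting $N_F(G)=o(n^{f})$ for large $n$. Hence $G'$ is $F$-free.

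\emph{Main obstacle.} The only delicate point is the counting lemma when $\phi$ is not injective. The greedy argument must work with candidate sets inside shared clusters while keeping images distinct. The smallness of $\epsilon$ relative to $(d-\epsilon)^{e(F)}$ ensures each candidate set stays of linear size ($\gg f$ and $\gg\epsilon|V_i|$) throughout, so regularity can be applied at every stage. The rest is routine bookkeeping of the constants $\epsilon,d,k$ as functions of $\eta$.
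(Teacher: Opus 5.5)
Your proposal is correct. It is the standard regularity-lemma proof: you clean the Szemer\'edi partition at a cost of at most $\eta n^2$ edges, note that any surviving copy of $F$ induces a homomorphism into the reduced graph, and apply the counting lemma to obtain $\Omega_{\eta,F}(n^f)$ copies. This is exactly the argument behind the Koml\'os--Simonovits result, which the paper cites rather than reproves. The only loose end is trivial: an isolated vertex of $F$ may land in $V_0$, and you fix this by assigning it to an arbitrary cluster or by deleting isolated vertices of $F$ at the outset.
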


\begin{lem}[Erd\H{o}s, Frankl, and R\"odl \cite{EFR1986}]\label{lem2.3C}
Let \(F\) be a fixed graph with chromatic number $r+1$. For any \(\varepsilon>0\),
there exists an integer \(n_0\) such that if \(G\) is an \(F\)-free graph on \(n\) vertices with \(n\ge n_0\),
then at most \(\varepsilon n^2\) edges can be removed from \(G\) to make it \(K_{r+1}\)-free.
\end{lem}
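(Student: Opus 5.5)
The plan is to apply Szemerédi's regularity lemma, clean up the graph, and then use the embedding (blow-up/key) lemma to show the cleaned graph cannot contain $K_{r+1}$. Fix $\varepsilon>0$, let $f=|F|$, and choose $d=\varepsilon/4$. Then choose $\varepsilon'>0$ small enough, depending on $d$, $r$ and $f$, that the embedding lemma applies. That lemma says: if $V_1,\dots,V_{r+1}$ are clusters with every pair $(V_i,V_j)$ $\varepsilon'$-regular of density at least $d$, then the union contains every subgraph of the blow-up $K_{r+1}(f)$, i.e.\ the complete $(r+1)$-partite graph with parts of size $f$. Also take $\varepsilon'\le \varepsilon/4$ and a lower bound $k_0\ge 4/\varepsilon$ on the number of clusters. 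The regularity lemma then gives $M=M(\varepsilon',k_0)$ and $n_0$.

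\emph{Step 1 (partition and cleaning).} Take an $\varepsilon'$-regular partition $V_0\cup V_1\cup\dots\cup V_k$ of $G$ with $k_0\le k\le M$ and $|V_0|\le\varepsilon' n$. Let $G'$ be obtained from $G$ by deleting four kinds of edges:
\begin{itemize}
\item edges meeting $V_0$, at most $\varepsilon' n^2$ of them;
\item edges inside some $V_i$, at most $k\,(n/k)^2/2\le n^2/(2k_0)$ of them;
\item edges in irregular pairs, at most $\varepsilon' k^2 (n/k)^2=\varepsilon' n^2$ of them;
\item edges in pairs of density below $d$, at most $d\,n^2/2$ of them.
\end{itemize}
With the choices above the total is at most $\varepsilon n^2$.

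\emph{Step 2 (no $K_{r+1}$ survives).} Suppose $G'$ contains a copy of $K_{r+1}$. Every edge of $G'$ joins two distinct clusters $V_i,V_j$ forming an $\varepsilon'$-regular pair of density at least $d$. The $r+1$ vertices of the clique therefore lie in $r+1$ distinct clusters, and these clusters are pairwise regular and dense, so they form a $K_{r+1}$ in the reduced graph.

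Since $\chi(F)=r+1$, a proper $(r+1)$-colouring of $F$ embeds $F$ into $K_{r+1}(f)$. Because $n\ge n_0$, each cluster has size at least $(1-\varepsilon')n/M$, which is large. The embedding lemma then places a copy of $F$ inside $G'\subseteq G$. This contradicts $F$-freeness, so $G'$ is $K_{r+1}$-free.

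The only nontrivial ingredient is the embedding/counting lemma for regular pairs. The main care needed is the order of constants: first $d$, then $\varepsilon'$ depending on $d$ and $f$, then $k_0$, and finally $n_0$, which depends on $M(\varepsilon',k_0)$. This order guarantees that the cluster sizes are large enough for the embedding. Everything else is routine edge counting.
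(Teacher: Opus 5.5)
Your proof is correct. The four cleaning bounds sum to at most $\tfrac34\varepsilon n^2$. Any $K_{r+1}$ surviving in $G'$ must have its vertices in $r+1$ distinct clusters that are pairwise $\varepsilon'$-regular of density at least $d$. The key lemma then embeds $F\subseteq K_{r+1}(f)$ into $G$, and your order of constants ($d$, then $\varepsilon'$, then $k_0$, then $M$ and $n_0$) guarantees the clusters are large enough.

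The paper gives no proof of this lemma; it simply cites Erd\H{o}s, Frankl, and R\"odl. Your argument is the standard regularity-lemma proof, essentially the one in that source, so there is nothing further to compare.
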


The following lemma provides a vertex-spectral stability result,
which can be derived from the work of Nikiforov \cite{Nikiforov2009}.

\begin{lem}[Nikiforov \cite{Nikiforov2009}] \label{lem2.4C}
Let $F$ be a graph with $\chi(F)=r\!+\!1\geq 3$.
For any $\varepsilon>0$, there exist $\delta>0$ and an integer $n_0$ such that,
if $G$ is an $F$-free graph of order $n\geq n_0$ with $\rho(G)\geq(1\!-\frac{1}{r}\!-\delta)n$,
then there exists a Tur\'{a}n graph $T_{n,r}$ such that $V(T_{n,r})=V(G)$ and $d(G,T_{n,r})\leq \varepsilon n^2$.
\end{lem}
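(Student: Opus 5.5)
This lemma is stated as a consequence of Nikiforov's work \cite{Nikiforov2009}. The plan is to combine two ingredients: Nikiforov's spectral version of the Erd\H{o}s--Stone--Simonovits theorem, and the classical Erd\H{o}s--Simonovits stability theorem, bridged by a removal step. Fix $\varepsilon>0$ and let $G$ be $F$-free on $n$ vertices with $\rho(G)\ge (1-\frac1r-\delta)n$, where $\delta$ will be chosen small.

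\textbf{Step 1: reduce to a $K_{r+1}$-free graph.} By Lemma~\ref{lem2.3C}, applied with a small parameter $\eta$, we may delete a set $E_0$ of at most $\eta n^2$ edges from $G$ to obtain a $K_{r+1}$-free graph $G'$. Let $\mathbf{x}$ be the unit Perron vector of $G$. Then
$$\rho(G')\ge \mathbf{x}^TA(G')\mathbf{x}=\rho(G)-2\sum_{uv\in E_0}x_ux_v.$$
The sum is bounded by $\sqrt{2|E_0|}$, since deleting a graph with $e$ edges lowers $\mathbf{x}^TA\mathbf{x}$ by at most $\rho(E_0)\le\sqrt{2e}$. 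This gives
$$\rho(G')\ge \big(1-\tfrac1r-\delta-\sqrt{2\eta}\big)n.$$

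\textbf{Step 2: edge density from the spectral bound, via a stability lemma for $K_{r+1}$-free graphs.} This is the main obstacle. The inequality $\rho\le\sqrt{2e}$ only gives $e(G')\gtrsim (1-\frac1r)^2n^2/2$, which is too weak. Instead I would invoke Nikiforov's theorem that a $K_{r+1}$-free graph satisfies $\rho(G')\le(1-\frac1r)n$, together with its stability form from \cite{Nikiforov2009}: if $\rho(G')\ge(1-\frac1r-\delta')n$, then $G'$ contains an induced complete $r$-partite graph on at least $(1-\varepsilon')n$ vertices with nearly equal parts.

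Should a self-contained route be preferred, I would argue as follows. First, pass to a subgraph $H$ of minimum degree at least $(1-\frac1r-2\delta'')n$ on at least $(1-\varepsilon'')n$ vertices. This works because repeatedly deleting vertices of degree below $\rho-\text{small}$ keeps the spectral radius high (a standard Nikiforov-type lemma), and the process must stop early since $\rho(G')\le \Delta$ and $\rho$ cannot exceed $(1-\frac1r)|V|$ for $K_{r+1}$-free graphs. Second, a $K_{r+1}$-free graph with such a minimum degree has $e(H)\ge(1-\frac1r-\delta''')\frac{n^2}{2}$, so the Erd\H{o}s--Simonovits stability theorem for $K_{r+1}$ (via Füredi's quantitative version) shows $H$ is $\varepsilon n^2/3$-close to a Turán graph on $V(H)$.

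\textbf{Step 3: assemble.} Extend the Turán partition of $H$ to all of $V(G)$ by distributing the at most $\varepsilon''n$ leftover vertices into parts so that the part sizes stay balanced. This costs at most $\varepsilon''n^2$ edge changes. Adding back the at most $\eta n^2$ edges of $E_0$, and choosing $\eta,\varepsilon'',\delta$ small relative to $\varepsilon$, gives $d(G,T_{n,r})\le\varepsilon n^2$ for all $n\ge n_0$.
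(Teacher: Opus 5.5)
Your Steps 1 and 3 are fine, but Step 2, which you rightly call the crux, is not established by either route you offer.

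First, the stability statement you attribute to Nikiforov is false as written. Delete from $T_{n,r}$ a perfect matching between two of its parts $V_1,V_2$. The graph is still $K_{r+1}$-free and $\rho\ge\delta(G)\ge(1-\frac1r)n-1$. In a complete multipartite graph non-adjacency is transitive. So if an induced complete multipartite subgraph on $S$ contained both ends $u\in V_1$, $v\in V_2$ of a deleted edge, every other $w\in S\cap V_1$ would have to be non-adjacent to $v$, which it is not. Hence either $S$ meets $V_1$ or $V_2$ in at most one vertex, or $S$ misses an endpoint of every deleted edge. In both cases $|S|\le(1-\frac1r)n+1$. What one can actually get is an induced $r$-partite (not complete $r$-partite) subgraph of large order and minimum degree.

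Second, your self-contained vertex-deletion argument does not work as sketched. The available estimates are $\rho(G-u)\ge\rho\,(1-2x_u^2)/(1-x_u^2)$ and $\rho^2x_u^2\le d(u)(1-x_u^2)$. Together they only bound the loss from deleting $u$ by about $d(u)/\rho$. For $d(u)$ just below $(1-\frac1r)n$ this is close to $1$, while the ceiling $(1-\frac1r)|V|$ drops by only $1-\frac1r$ per deletion. So no contradiction arises unless $d(u)\lesssim(1-\frac1r)^2n$. A sharper loss estimate needs a nearly flat Perron vector, which is essentially the structure you are trying to prove.

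The repair is already in the paper: Theorem~\ref{thm1.7C} gives $\rho^2\le(1-\frac1r)2e$ for $K_{r+1}$-free graphs. You abandoned the edge-counting route after trying only $\rho\le\sqrt{2e}$. With the sharper bound, Step 1 yields $e(G)\ge e(G')\ge\rho(G')^2/\bigl(2(1-\frac1r)\bigr)\ge(1-\frac1r-2\delta-2\sqrt{2\eta})\frac{n^2}{2}$. The Erd\H{o}s--Simonovits stability theorem, applied directly to the $F$-free graph $G$, then gives a $T_{n,r}$ on $V(G)$ with $d(G,T_{n,r})\le\varepsilon n^2$. You need no minimum-degree subgraph and no redistribution of leftover vertices. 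Repaired this way, your argument is a genuinely different and elementary derivation: removal lemma, edge-spectral bound, combinatorial stability.

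The paper gives no proof, only the attribution. The intended derivation is Nikiforov's spectral stability dichotomy for arbitrary graphs: if $\rho(G)>(1-\frac1r-\delta)n$, then either
\begin{itemize}
\item $G$ contains a complete $(r+1)$-partite subgraph with every part of size at least $\lfloor c\ln n\rfloor$, which contains $F$ once $n$ is large and is therefore excluded; or
\item $G$ is within $g(\delta,c)n^2$ edges of $T_{n,r}$, where $g\to0$.
\end{itemize}
That route needs no removal step, but relies on a much deeper theorem.
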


The following edge-spectral extremal result for $K_{r+1}$ is given by Nikiforov.

\begin{thm}[\cite{Nikiforov2002,Nikiforov2006,Nikiforov2009-JCTB}]\label{thm1.7C}
Let $r\geq2$ and $G$ be a $K_{r+1}$-free graph with $m$ edges.
Then we have
\begin{align}\label{align-02F}
\rho(G)\le\sqrt{\big(1\!-\!\frac{1}{r}\big)2m};
\end{align}
Equality holds if and only if one of the following conditions holds:

\vspace{1mm}
{\rm (i)}  $r= 2$ and $G$ is a complete bipartite graph.

\vspace{1mm}
{\rm (ii)}  $r\ge 3$ and $G$ is a regular complete \(r\)-partite graph.
\end{thm}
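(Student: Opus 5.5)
The plan is to combine the Rayleigh quotient for $\rho(G)$ with the Motzkin--Straus inequality, joined by a single Cauchy--Schwarz step. Equality is then characterized by tracking when each of these inequalities is tight.

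\emph{The inequality.} Let $\mathbf{x}$ be a nonnegative unit Perron eigenvector of $A(G)$, so that $\rho(G)=2\sum_{ij\in E(G)}x_ix_j$. Cauchy--Schwarz over the $m$ edges gives
\[
\Big(\sum_{ij\in E(G)}x_ix_j\Big)^2\le m\sum_{ij\in E(G)}x_i^2x_j^2 .
\]
Put $y_i=x_i^2$, so that $y\ge 0$ and $\sum_i y_i=1$. Since $G$ is $K_{r+1}$-free, the Motzkin--Straus theorem gives
\[
\sum_{ij\in E(G)}y_iy_j\le \frac12\Big(1-\frac1r\Big).
\]
Combining the two bounds yields $\rho^2(G)\le 4m\cdot\frac12(1-\frac1r)=(1-\frac1r)2m$, which is \eqref{align-02F}.

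\emph{Reducing the equality case to a connected graph.} Suppose equality holds. The Perron vector can be chosen to be supported on a single component $H$ attaining $\rho(G)$. If some other component contains an edge, then the Cauchy--Schwarz step is strict, because an edge with $x_ix_j=0$ forces all products $x_ix_j$ to vanish, which is impossible. Hence, up to isolated vertices, $G=H$ is connected and $\mathbf{x}>0$. Equality in Cauchy--Schwarz then means that $x_ix_j=c$ is constant over all edges.

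\emph{Case 1: $G$ contains an odd cycle.} Walking around an odd closed walk, the relation $x_ix_j=c$ alternately forces $x$ to take the values $a$ and $c/a$, and the odd length forces $a=c/a$. By connectivity $\mathbf{x}$ is then constant, so $G$ is $d$-regular with $\rho=d=2m/n$. Equality in Motzkin--Straus at the uniform vector reads $e(G)\cdot\frac{1}{n^2}=\frac12(1-\frac1r)$, that is, $e(G)=(1-\frac1r)\frac{n^2}{2}\ge e(T_{n,r})$. By Tur\'an's theorem (with uniqueness), $G=T_{n,r}$ and $r\mid n$, so $G$ is a regular complete $r$-partite graph. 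Note that $r=2$ is impossible in this case, since $T_{n,2}$ is bipartite.

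\emph{Case 2: $G$ is bipartite with parts $X,Y$.} The condition $x_ix_j=c$ together with connectivity gives $x\equiv a$ on $X$ and $x\equiv b$ on $Y$. The eigen-equations then force the degrees to be constant on each side, say $d_X$ and $d_Y$. Hence $\rho^2=d_Xd_Y$, while $m=|X|d_X=|Y|d_Y$. Since $d_Y\le |X|$, we get $\rho^2\le m$, with equality if and only if $G$ is complete bipartite. For $r\ge3$ we have $m<(1-\frac1r)2m$, so this case cannot give equality. For $r=2$ it gives exactly the complete bipartite graphs.

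\emph{Converse.} Both families attain equality, by direct computation of their spectral radii.

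\emph{Main obstacle.} The delicate step is the equality analysis, specifically the passage from ``$x_ix_j$ constant on edges'' to regularity or a biregular bipartite structure, and the use of Tur\'an uniqueness to pin down $G$. The inequality itself is a two-line argument.
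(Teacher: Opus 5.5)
The paper does not prove this statement; it quotes it from Nikiforov's papers, so there is no in-paper argument to compare against. Your proof is correct.

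\emph{The inequality.} Your argument (Motzkin--Straus applied to $y_i=x_i^2$, joined to the Rayleigh quotient by Cauchy--Schwarz over the edges) is the classical route to Nikiforov's bound.

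\emph{The equality case.} The paper relies on the cited works here, whereas you give one self-contained argument. Your key move is a good one. You extract structure from tightness of Cauchy--Schwarz first: the Perron vector has constant product $x_ix_j=c$ on edges. So it is constant on a non-bipartite component and constant on each side of a bipartite one. This makes $H$ regular or biregular, and you never need to characterize the maximizers of the Motzkin--Straus form, which are not unique and awkward to use directly. After that you only need two facts:
\begin{itemize}
\item In the non-bipartite case, evaluate the form at the uniform vector and apply Tur\'an's theorem with uniqueness, forcing $r\mid n$ and $H\cong T_{n,r}$.
\item In the bipartite case, use the elementary bound $\rho^2=d_Xd_Y\le d_X|X|=m$. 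This also rules out bipartite extremal graphs for $r\ge 3$.
\end{itemize}

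\emph{Reduction to one component.} Choosing $\mathbf{x}$ supported on a component attaining $\rho(G)$, and observing that an edge outside the support would give a zero product, is correct. Your only implicit assumption is $m\ge 1$, which is the intended setting; for $m=0$ equality holds trivially.
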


We also require the following edge-stability results, established by Li, Liu, and Zhang \cite{Li2025+}.

\begin{lem}[Li, Liu, and Zhang \cite{Li2025+}] \label{lem2.5C}
For any $\varepsilon>0$ and integer $r\ge 3$, there exist $\delta>0$ and $m_{0}$ such that,
if $G$ is a $K_{r+1}$-free graph with size $m\ge m_{0}$ and spectral radius
$$\rho(G)\ge \sqrt{\big(1\!-\!\frac{1}{r}\!-\!\delta\big)2m},$$
then $G$ admits an induced subgraph $G'$ such that
$|G'|\leq (1\!+\!\varepsilon)\sqrt{2m/(1\!-\!\frac1r)}$ and $d(G,G')\leq\varepsilon m$.
\end{lem}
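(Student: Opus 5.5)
We prove Lemma~\ref{lem2.5C}, the edge-stability result of Li, Liu and Zhang: for every $\varepsilon>0$ and $r\ge3$ there are $\delta>0$ and $m_0$ such that any $K_{r+1}$-free $G$ with $m\ge m_0$ edges and $\rho(G)\ge\sqrt{(1-\frac1r-\delta)2m}$ has an induced subgraph $G'$ with $|G'|\le(1+\varepsilon)\sqrt{2m/(1-\frac1r)}$ and $d(G,G')\le\varepsilon m$.

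The approach is to localize the spectral mass of $G$ via the Perron vector and then discard vertices of small entry. Let $\lambda=\rho(G)$ and let $\mathbf x$ be the unit nonnegative Perron vector. For $K_{r+1}$-free graphs, Nikiforov's proof of (\ref{align-02F}) is a Motzkin--Straus type argument:
\[
\lambda^2=\sum_{v}\Big(\sum_{u\sim v}x_u\Big)^2 \le \cdots \le \Big(1-\frac1r\Big)2m .
\]
It comes from expanding $\lambda^2 x_v=\sum_u d(u,v)x_u$, bounding walks of length two, and then applying Motzkin--Straus to $\mathbf x$ restricted to the common neighbourhoods. Under the hypothesis $\lambda^2\ge(1-\frac1r-\delta)2m$, every inequality in that chain is tight up to an additive $O(\delta m)$. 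The first step is to extract two approximate consequences from this near-tightness:
\begin{itemize}
\item[(a)] Almost all edges $uv$ carry weight consistent with $x_u\approx x_v$, in the sense that $\sum_{uv\in E}(x_u-x_v)^2$ is small relative to $\lambda$.
\item[(b)] The mass $\sum_v x_v^2$ is concentrated on vertices whose entry is at least $\eta/\sqrt{n'}$, where $n'=\sqrt{2m/(1-\frac1r)}$ and $\eta$ is a small constant.
\end{itemize}

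The second step is to take $G'$ to be the subgraph induced by $S=\{v: x_v\ge \eta\lambda^{-1}\}$, or by a comparable threshold. Since $\lambda x_v=\sum_{u\sim v}x_u$ and $\sum_v x_v^2=1$, one gets $|S|\cdot \eta^2\lambda^{-2}\le 1$. This only gives $|S|\le\lambda^2/\eta^2$, which is too weak. The sharp bound $|S|\le(1+\varepsilon)n'$ must instead come from degrees: a vertex with large $x_v$ must have degree close to $\lambda$. Indeed, $\lambda x_v\le\sqrt{d(v)}\cdot\sqrt{\sum_{u\sim v}x_u^2}$, and near-tightness forces $d(v)\ge(1-\varepsilon')\lambda\cdot\frac{r}{r-1}$ for typical $v\in S$. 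Counting edges then gives
\[
|S|\cdot(1-\varepsilon')\frac{r}{r-1}\lambda\le 2m,
\]
which yields $|S|\le(1+\varepsilon)n'$, because $\frac{r}{r-1}\lambda\approx\frac{2m}{n'}$.

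The third step is to bound the edges lost, namely those with an endpoint outside $S$. Edges $uv$ with both $x_u,x_v$ small contribute little to $\lambda=2\sum_{uv\in E}x_ux_v$, but there can be many of them. I plan to handle this by combining $\lambda^2\approx(1-\frac1r)2m$ with the identity
\[
\lambda^2=\sum_v\Big(\sum_{u\sim v}x_u\Big)^2\le\sum_v d_S(v)\sum_{u\sim v}x_u^2+(\text{error}).
\]
A $\delta$-slack argument should then show that $e(G)-e(G[S])\le\varepsilon m$. The underlying point is that each edge outside $S$ is "wasted" in the Nikiforov inequality $\lambda^2\le 2m(1-1/r)$: its contribution to the $2m$ side exceeds its contribution to $\lambda^2$ by a constant fraction.

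The main obstacle is making this waste quantitative, that is, proving a refined inequality of the form
\[
\lambda^2\le\Big(1-\frac1r\Big)2e(G[S])+c\,\big(e(G)-e(G[S])\big)
\]
with some constant $c<2(1-\frac1r)$. I would try to obtain it by splitting $\mathbf x$ into $\mathbf x|_S$ and $\mathbf x|_{\bar S}$. On $S$ I would apply Theorem~\ref{thm1.7C}. The cross and outside terms would be bounded by Cauchy--Schwarz, using that $\sum_{v\notin S}x_v^2$ is small by (b).
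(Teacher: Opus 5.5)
The paper does not prove this lemma; it is quoted from Li, Liu and Zhang. So I judge your argument on its own terms, and as written it has genuine gaps.

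\textbf{Where the sketch fails.}

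First, nothing in your sketch uses $r\ge 3$, yet the statement is false for $r=2$. The star $K_{1,m}$ is triangle-free with $\rho^2=m=(1-\frac12)2m$, and any induced subgraph on at most $(1+\varepsilon)2\sqrt m$ vertices misses all but $O(\sqrt m)$ edges. Your claims (a) and (b) also fail for this exactly tight example: $\sum_{uv\in E}(x_u-x_v)^2\approx m/2\gg\lambda$, and half of the $\ell_2$-mass sits on leaves with entry $(2m)^{-1/2}\ll m^{-1/4}$. So they cannot simply be ``extracted from near-tightness.''

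What near-tightness actually gives is a statement about products. Nikiforov's bound is
$\lambda=2\sum_{uv\in E}x_ux_v\le\sqrt{2m}\,\bigl(2\sum_{uv\in E}x_u^2x_v^2\bigr)^{1/2}\le\sqrt{(1-\frac1r)2m}$,
by Cauchy--Schwarz and then Motzkin--Straus for the probability vector $(x_v^2)$. Near-equality in the Cauchy--Schwarz step yields $\sum_{uv\in E}(x_ux_v-\bar a)^2\le\delta/2$ with $\bar a=\lambda/2m\approx 1/n'$. Thus $x_ux_v\approx 1/n'$ on all but $O(\delta\eta^{-2}m)$ edges; this does not give $x_u\approx x_v$.

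Second, your degree step is miscalibrated. One has $\frac{r}{r-1}\lambda\approx n'$, whereas $2m/n'=(1-\frac1r)n'\approx\lambda$. In $T_{n',r}$ every degree equals $\lambda$, so $d(v)\ge(1-\varepsilon')\frac{r}{r-1}\lambda$ fails at every vertex. Your count would then give $|S|\lesssim(1-\frac1r)n'$, which contradicts the extremal example. Moreover, Cauchy--Schwarz does not give $d(v)\gtrsim\lambda$ for vertices merely above a small threshold; that requires $x_v\ge(1-\varepsilon')x_{\max}$, via $\lambda x_v\le d(v)x_{\max}$. And a statement about ``typical'' $v$ cannot bound $|S|$.

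Third, the decisive waste inequality is left as a hope, and its proposed lever does not work. Smallness of $\sum_{v\notin S}x_v^2$ does not control the cross terms. A vertex of large entry joined to $k$ vertices of entry $\theta$ contributes $\Theta(k\theta)$ to $\lambda$, while the outside mass is only $k\theta^2$.

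\textbf{The missing idea.} Threshold at the scale $n'^{-1/2}\asymp m^{-1/4}$ (your $\eta\lambda^{-1}$ is of order $m^{-1/2}$), and exploit the product condition through a bipartite decomposition.

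Put $t_v=\log(x_v\sqrt{n'})$, and define
$M=\{|t_v|\le\kappa\}$, $L=\{t_v<-\kappa\}$, $H=\{t_v>\kappa\}$.
Normalization alone gives $|M|\le e^{2\kappa}n'$, so no degree counting is needed.

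A good edge, meaning $|t_u+t_v|\le\eta$, lies in $E(M)$ or in $E(L,H)$ unless it meets the band $\{\kappa-\eta<|t_v|\le\kappa+\eta\}$. Choose $\kappa$ among $K$ values whose bands are disjoint; then some band meets at most $2m/K$ edges. Bad edges and band edges are disposed of as follows. For any $E'\subseteq E(G)$, Cauchy--Schwarz plus Motzkin--Straus give $2\sum_{uv\in E'}x_ux_v\le\sqrt{(1-\frac1r)2|E'|}$.

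Since $M$ and $L\cup H$ are disjoint, apply the Rayleigh quotient separately to each part. Use Theorem~\ref{thm1.7C} on $G[M]$ and the bound $\rho\le\sqrt{e}$ on the bipartite graph $G[L,H]$. This gives
$\lambda\le\max\{\sqrt{(1-\frac1r)2e(M)},\sqrt{e(L,H)}\}+\sqrt{2|E_{\mathrm{rest}}|}$,
with $\delta\ll\eta\ll K^{-1}\ll\varepsilon$.

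Because $2(1-\frac1r)>1$ exactly when $r\ge 3$, the second alternative is excluded. Hence $e(M)\ge(1-\varepsilon)m$, and $G'=G[M]$ works. This is the quantitative ``waste'' you were looking for. Edges outside the flat part form, up to negligible error, a bipartite graph. Its spectral efficiency is $1$, strictly below $2(1-\frac1r)$.
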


\begin{lem}[Li, Liu, and Zhang \cite{Li2025+}] \label{lem2.6C}
 For every $\varepsilon\in (0,0.01)$,
 there exists $\delta=\delta(\varepsilon)>0$ such that if $H$
 is a triangle-free graph with $h$ edges and $\rho(H)\geq\sqrt{(1-\delta)h}$,
 then there exist disjoint vertex subsets $U,V\subseteq V(H)$ such that $d(H, K_{U,V}) \leq \varepsilon h$.
\end{lem}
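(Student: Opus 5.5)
The statement to prove is Lemma \ref{lem2.6C}: a triangle-free $H$ with $h$ edges and $\rho(H)\ge\sqrt{(1-\delta)h}$ is $\varepsilon h$-close to some $K_{U,V}$. The plan is spectral. Let $\mathbf{x}$ be a unit Perron vector of $A(H)$ and $\rho=\rho(H)$. Triangle-freeness means the neighbourhood $N(v)$ of every vertex is independent, so the standard walk-counting identity becomes
\[
\rho^2 x_v=\sum_{u\sim v}\sum_{w\sim u}x_w=\sum_{u}|N(u)\cap N(v)|\,x_u .
\]
I would also use Nikiforov's argument behind Theorem \ref{thm1.7C} in quantitative form. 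For a triangle-free graph,
\[
\rho^2=\sum_{v}\rho^2 x_v^2\le \sum_{uv\in E}(x_u^2+x_v^2)\cdot(\text{stuff})\le h,
\]
and equality holds only in the complete bipartite case. The aim is to make every step of this chain robust, so that near-equality $\rho^2\ge(1-\delta)h$ forces near-equality in each inequality.

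Concretely, I would use the inequality $\rho^2\le \sum_{v}\sum_{u\sim v}x_u^2$, valid for triangle-free graphs, and compare it with $\sum_{uv\in E}\ldots$. The cleanest route is the following.
\begin{enumerate}
\item Pick the vertex $w$ with maximum $x_w$. Then $\rho x_w=\sum_{u\in N(w)}x_u$. Let $U=N(w)$, an independent set, and set $V$ to be the set of vertices with a neighbour in $U$.
\item Apply the identity $\rho^2x_w=\sum_{u\in U}\sum_{y\sim u}x_y\le x_w\,e(U,V(H))$. This gives $e(U,V(H))\ge(1-\delta)h$, so almost all edges of $H$ touch $U$.
\item Since $U$ is independent, these edges go from $U$ to $V\setminus U$. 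Hence, after deleting at most $\delta h$ edges, $H$ is bipartite with parts $U$ and $V'=V\setminus U$.
\end{enumerate}

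It remains to show that the bipartite graph $B=H[U,V']$ is nearly complete. Here I would use $\rho(B)\ge\rho(H)-\sqrt{2\delta h}$, which follows from the eigenvalue perturbation bound $\rho(H)\le\rho(B)+\rho(H-E(B))$ together with $\rho(H')\le\sqrt{2e(H')}$. For a bipartite graph $B$ with $e$ edges, $\rho(B)^2\le e$. The plan is then to show that $\rho(B)^2\ge(1-\delta')e$ forces $B$ to be close to complete bipartite on a subset of its vertices. The tool is the Frobenius-norm identity $\sum_i\lambda_i^2=2e$ together with the symmetry of the spectrum: $\rho^2\ge(1-\delta')e$ leaves total mass at most $2\delta' e$ for all other eigenvalues. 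Consequently $A(B)$ is close to the rank-two matrix $\rho(\mathbf{y}\mathbf{z}^T+\mathbf{z}\mathbf{y}^T)$ in Frobenius norm, where $\mathbf{y},\mathbf{z}$ are the restrictions of the Perron vector to the two sides. Rounding that rank-one $0/1$ approximation gives $U_0\subseteq U$ and $V_0\subseteq V'$ such that $A(B)$ differs from $K_{U_0,V_0}$ in $O(\sqrt{\delta'})e$ entries. The rounding keeps the vertices where $y_u$, respectively $z_v$, exceeds a threshold, and uses that the entries of $\rho\,\mathbf{y}\mathbf{z}^T$ must be near $0$ or $1$.

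The main obstacle is this rounding step: turning a $0/1$ matrix that is close in Frobenius norm to a rank-one matrix into one close to an all-ones block. I would first try the direct route of choosing a threshold $t$ on the products $y_uz_v$ and counting the entries that round incorrectly. If that is too lossy, the fallback is an iterative cleaning argument: delete the low-weight vertices, whose total edge contribution is small because $\sum_v x_v^2=1$, and check that the remaining bipartite graph has density close to $1$. Finally, choose $\delta$ small enough in terms of $\varepsilon$ that all the losses, namely $\delta h$, the $O(\sqrt{\delta})h$ from perturbation and the rounding error, sum to at most $\varepsilon h$.
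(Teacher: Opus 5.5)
\paragraph{The paper's proof, and what you got right.} The paper does not prove this lemma. It is imported from Li, Liu and Zhang and used as a black box in the proof of Theorem~\ref{thm1.6C}(ii), so your argument has to stand on its own.

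Your reduction is sound. Since $N(w)$ is independent, $\sum_{u\in N(w)}d(u)$ counts each edge meeting $N(w)$ exactly once. So $\rho^2x_w\le x_w\sum_{u\in N(w)}d(u)$ puts all but $\delta h$ edges into the bipartite graph $B$ between $U=N(w)$ and $V$, and the perturbation bound gives $\rho(B)^2\ge(1-O(\sqrt{\delta}))e(B)$.

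\paragraph{The gap.} The remaining claim is that a bipartite $B$ with $\rho(B)^2\ge(1-\delta')e(B)$ is $o(e(B))$-close to some $K_{U_0,V_0}$. This is the real content of the lemma, and you leave it as a plan. Neither mechanism you offer works as stated.

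The Frobenius bound only says that the biadjacency matrix is close to the nonnegative rank-one matrix $2\rho\,\mathbf y\mathbf z^{T}$. (The factor is $2$, not $1$, since $\|\mathbf y\|^2=\|\mathbf z\|^2=\tfrac12$.) A level set $\{y_uz_v\ge t\}$ of such a matrix is, after sorting, a Ferrers (staircase) pattern, not a rectangle. Thresholding $y_u$ and $z_v$ separately does give a rectangle, but proving it is nearly full and captures almost all edges requires knowing the correct scale on each side. Ruling out a multi-scale staircase is exactly the work, and it is not done.

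The stated reason for your fallback is false. In $K_{s,t}$ with $s\ll t$, every edge meets the $t$ vertices of weight $(2t)^{-1/2}$, far below the weight $(2s)^{-1/2}$ on the other side. The identity $\sum_v x_v^2=1$ bounds the number of heavy vertices, not the number of edges at light ones.

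What is missing is a single-scale concentration statement: on each side, the vertices carrying almost all of the Perron mass have nearly equal weights.

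\paragraph{How to supply it.} Use near-equality in the Cauchy--Schwarz proof of $\rho(B)\le\sqrt e$, where $e=e(B)$:
$\rho=2\sum_{uv\in E(B)}y_uz_v\le 2\sqrt e\,\bigl(\sum_{uv\in E(B)}y_u^2z_v^2\bigr)^{1/2}\le\sqrt e$.

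Put $\mu_U(u)=2y_u^2$, $\mu_V(v)=2z_v^2$ and $\mu=\mu_U\otimes\mu_V$. Near-equality gives two facts:
\begin{itemize}
\item[(a)] $\mu(E(B))\ge 1-\delta'$;
\item[(b)] over a uniform edge, $y_uz_v$ has variance at most $\frac{\delta'}{1-\delta'}$ times its squared mean.
\end{itemize}
By Chebyshev there is $E'\subseteq E(B)$ with $\mu(E')\ge 1-O(\sqrt{\delta'})$ and $\mu(u,v)=(1\pm O((\delta')^{1/4}))/e$ on $E'$.

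By Fubini, all $u\in U$ outside a set of $\mu_U$-mass $O(\sqrt{\delta'})$ satisfy $\mu_V(E'_u)\ge\tfrac23$. Any two such $u,u'$ share some $v\in E'_u\cap E'_{u'}$, which forces $y_u^2\approx y_{u'}^2=:p/2$. Similarly the typical $z_v^2$ cluster at some $q/2$, with $pq\approx 1/e$.

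The typical sets $U_0,V_0$ then satisfy $|U_0||V_0|=(1\pm O((\delta')^{1/4}))e$ and $e(U_0,V_0)\ge(1-O((\delta')^{1/4}))e$. Hence $d(H,K_{U_0,V_0})=O(\delta^{1/8})h$, and the Frobenius detour becomes unnecessary.

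\paragraph{A side remark.} Drop your opening inequality $\rho^2\le\sum_v\sum_{u\sim v}x_u^2=\sum_u d(u)x_u^2$. It fails for triangle-free graphs: subdividing every edge of $K_{1,L}$ gives $\rho^2=L+1$ but $\sum_u d(u)x_u^2=\frac{L^2+2L+3}{2(L+1)}$. You never actually use it.
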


\begin{lem}[Li, Liu, and Zhang \cite{Li2025+}] \label{lem2.7C}
 For every $\varepsilon > 0$, there exists $\delta=\delta(\varepsilon)>0$ such that the following holds.
 Let $H$ be a graph with $h$ edges,
 and let $\boldsymbol{x}$ be the non-negative unit eigenvector corresponding to $\rho(H)$.
 If $\rho(H) \geq \sqrt{(1 - \delta) h}$ and
$\max\{x_i : i \in V(G)\} > \delta^{-1} h^{-1/4},$
then there are disjoint vertex subsets $U, V\subseteq V(H)$ such that $d(H, K_{U,V}) \leq \varepsilon m$.
\end{lem}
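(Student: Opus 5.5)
The plan is to compare $\rho^2$ with $h$ through a weighted degree identity, then use the large entry to locate the two sides of the bipartition. Write $\rho=\rho(H)$ and let $\boldsymbol{x}$ be the unit Perron vector. By Cauchy--Schwarz, $\rho^2=\|A\boldsymbol{x}\|^2=\sum_v\big(\sum_{w\sim v}x_w\big)^2\le\sum_v d_v\sum_{w\sim v}x_w^2=\sum_w x_w^2 D(w)$, where $D(w):=\sum_{v\in N(w)}d_v$. Counting edges gives the identity
$$D(w)=2e(N(w))+e\big(N(w),V\setminus N(w)\big)=h+e(N(w))-e\big(V\setminus N(w)\big).$$
Since $\sum_w x_w^2=1$, the hypothesis $\rho^2\ge(1-\delta)h$ becomes
$$\sum_w x_w^2\Big(e\big(V\setminus N(w)\big)-e(N(w))\Big)\le \delta h,$$
together with near-equality in each Cauchy--Schwarz step. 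Heuristically, for a typical heavy vertex $w$, almost every edge of $H$ meets $N(w)$, unless this is compensated by edges inside $N(w)$.

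Next I use the large entry. Let $u$ be the vertex with $x_u=\max_i x_i>\delta^{-1}h^{-1/4}$. From $\rho x_u=\sum_{v\sim u}x_v\le\sqrt{d_u}\sqrt{1-x_u^2}$ we get
$$d_u\ge\rho^2x_u^2/(1-x_u^2)\ge(1-\delta)\delta^{-2}\sqrt h.$$
Since $x_u$ is the maximum entry, we also have $\rho x_w\le d_wx_u$ for every $w$. The aim is to rule out the dense, nearly regular regime; $K_n$ is the model example, with all entries of order $h^{-1/4}$. In that regime $e(N(w))$ can be as large as $\Theta(h)$ and compensate in the identity above, and the large-entry hypothesis is exactly what excludes it. I would bound $e(N(u))$ by considering $\rho^2x_u=d_ux_u+\sum_{w\ne u}|N(u)\cap N(w)|\,x_w$. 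I would also use a third-power count, $\rho^3x_u\ge$ (triangles through $u$, weighted). The goal is to show that the weighted contribution of edges inside neighbourhoods is $O(\sqrt\delta\,h)$. The heuristic is that a heavy vertex of very large degree, sitting in many triangles, would push $\rho$ well above $\sqrt h$.

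Once $e(N(w))$ is small in the weighted sense, the identity gives that for most of the $x^2$-mass, $e(V\setminus N(w))\le\sqrt\delta h$. Take $U:=N(u)$. Let $V$ be the set of vertices outside $U$ having almost all their neighbours in $U$, essentially $N(N(u))\setminus U$. Then $e(V\setminus U)\le\sqrt\delta h$, and the near-equality in Cauchy--Schwarz (the vectors $(x_w)_{w\sim v}$ being nearly constant on the support) forces most pairs in $U\times V$ that carry weight to be edges. This is what converts the smallness of $e(V\setminus N(u))$ and $e(N(u))$ into $d(H,K_{U,V})\le\varepsilon h$. Finally I would remove the vertices of negligible weight, absorbing their edges into the $\varepsilon h$ error, and choose $\delta$ small in terms of $\varepsilon$.

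The main obstacle is the middle step: showing that edges inside neighbourhoods, and hence triangles, contribute only $o(h)$ in the weighted sum. This is the only place the hypothesis on $\max x_i$ is used, and it has to distinguish $H$ from a near-clique. I would first try the walk-count comparison $\rho^3x_u$ versus $\rho\cdot\rho^2x_u$ at the vertex $u$ together with $d_u\gg\sqrt h$. If that fails, I would try a reduction showing that $H[U\cup N(U)]$ is nearly triangle-free, and then invoke Lemma~\ref{lem2.6C} on it.
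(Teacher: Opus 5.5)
\textbf{Assessment.} The paper does not prove this lemma; it is quoted from Li, Liu and Zhang. I have therefore judged your plan on its own terms. There is a genuine gap, and it is more than an unfinished step: the intermediate claim your middle step aims for is false, and so is your final choice of $U$.

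\textbf{A counterexample to the plan.} Take $H=K_s\vee\overline{K_n}$ with $s\ge 2$ fixed and $n\to\infty$ (for $s=2$ this is the book). Then $h=sn+\binom{s}{2}$ and $\rho^2=sn+(s-1)\rho>h$. Each hub has entry $x\to(2s)^{-1/2}\gg\delta^{-1}h^{-1/4}$, so the hypotheses hold. The conclusion also holds, with $U$ the hubs and $V$ the leaves.

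\begin{itemize}
\item Each hub $w$ has $e(N(w))\ge (s-1)n$, so $\sum_w x_w^2e(N(w))\approx\frac{s-1}{2s}h$, not $O(\sqrt\delta\,h)$. A heavy vertex lying in about $(s-1)n$ triangles coexists with $\rho^2=h+O(\sqrt h)$, contrary to your heuristic.
\item For the same reason your Cauchy--Schwarz step is far from tight, because $\boldsymbol{x}$ is very non-constant on a hub's neighbourhood. For $s=2$ one gets $\sum_w x_w^2D(w)\approx\frac54 h$ while $\rho^2\approx h$. So ``near-equality in each Cauchy--Schwarz step'' is not available. Your identity then only gives $\sum_w x_w^2e(V\setminus N(w))\le\delta h+\sum_w x_w^2e(N(w))$, which carries no information here.
\item With $u$ a hub, $U=N(u)$ contains the other hubs, so your $V$ is just $\{u\}$. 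Then $d(H,K_{U,V})=(s-1)n+\binom{s-1}{2}\approx(1-\frac1s)h$.
\end{itemize}
The side of the bipartition to look for is the set of \emph{heavy} vertices, not $N(u)$.

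\textbf{A repair.} Your fallback, reducing to a nearly triangle-free graph and invoking Lemma~\ref{lem2.6C} as a black box (as the paper does), is the right idea. The reduction, however, comes from the first-moment inequality in the proof of Lemma~\ref{lem2.8C}(iii), not from $\rho^3$ or $D(w)$.

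\begin{itemize}
\item Since $|N(u)\cap N(w)|\le d_w$, we have $\rho^2x_u\le\sum_w d_wx_w=\sum_{vw\in E(H)}(x_v+x_w)$.
\item Let $L=\{w:x_w\ge\eta x_u\}$ with $\delta\le\eta^2$ and $\eta\ll\varepsilon$. From $\sum_w x_w^2=1$ we get $|L|\le\eta^{-2}x_u^{-2}<\eta^{-2}\delta^2\sqrt h\le\eta^2\sqrt h$, so $e(L)<\eta^4h/2$. This is the only use of the large-entry hypothesis, and it is exactly what rules out near-cliques.
\item Bound $x_v+x_w$ by $2x_u$, $(1+\eta)x_u$ or $2\eta x_u$ according as the edge has two, one or no ends in $L$. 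This gives $(1-\delta)h\le(1+\eta)h+(1-\eta)\big(e(L)-e(\overline{L})\big)$, hence $e(\overline{L})\le 3\eta h$.
\item So $H$ is within $4\eta h$ edges of the bipartite graph $B=H[L,\overline{L}]$. From $\rho(B)\ge\rho(H)-\sqrt{8\eta h}$ we get $\rho^2(B)\ge(1-7\sqrt\eta)\,e(B)$.
\item Apply Lemma~\ref{lem2.6C} to $B$ with $\varepsilon'=\min\{\varepsilon/2,0.009\}$, choosing $\eta$ so that $7\sqrt\eta$ is below the corresponding $\delta$. This gives $U,V$ with $d(H,K_{U,V})\le 4\eta h+\varepsilon' h\le\varepsilon h$, reading the statement's $\varepsilon m$ as $\varepsilon h$.
\end{itemize}
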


Let $K=K_r(n_1,\dots,n_r)$ denote the complete $r$-partite graph
whose class sets have sizes $n_1\ge \cdots \ge n_r$.
If we add an edge within some color class of $K$, we refer to it as a class-edge,
and if we delete an edge between two color classes of $K$, we refer to it as a cross-edge.
The following result provides an estimate of \( \rho(G) \) for any graph \( G \)
that differs from a complete multipartite graph by only a small number of edges.

\begin{lem}[Fang, Li, Lin, and Ma \cite{Fang2025+}]\label{first-key}
Let $n$ be sufficiently large, and let $G$ be a graph obtained from an $n$-vertex complete $r$-partite graph $K=K_r(n_1,n_2,\dots,n_r)$ by
 adding $\alpha_1$ class-edges and deleting  $\alpha_2$ cross-edges,
where $\max\{\alpha_1,\alpha_2\} \le \frac{n}{(20r)^3}$.

\vspace{1mm}
{\rm (i)} If $n_1-n_r \le \frac{n}{400}$, then by denoting $\phi=\max\{n_1-n_r,2(\alpha_1 +\alpha_2)\}$, we have
$$\Big| \rho(G)\!-\!\rho(K)\!-\!\frac{2(\alpha_1\!-\!\alpha_2)}{n} \Big| \le\frac{56(\alpha_1\!+\!\alpha_2)\phi}{n^2}. $$

{\rm (ii)} If $n_1-n_r\geq 2k$ for some integer $k\le \frac{n}{(20r)^3}$, then we have
\begin{align*}
\rho(G) &\leq
    \rho(T_{n,r})\!+\!\frac{2(\alpha_1\!-\!\alpha_2)}{n}\!-\!\frac{2(r\!-\!1)k^2}{rn} \cdot \Big(1\!-\!\frac{28r\psi}{n} \Big)^4
    \!+\!\frac{56(\alpha_1\!+\!\alpha_2)\cdot 7r \psi}{n^2},
    \end{align*}
where $\psi=\max\{3k,2(\alpha_1+\alpha_2)\}$.
\end{lem}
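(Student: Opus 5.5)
The statement is Lemma~\ref{first-key}. My plan is to compare $\rho(G)$ with $\rho(K)$ through Rayleigh quotients, in both directions, so that the only contributions are the added class-edges and the deleted cross-edges. For $K=K_r(n_1,\dots,n_r)$, the Perron vector $\boldsymbol{y}$ is constant on each class. Its value $y^{(i)}$ on class $i$ satisfies $(\rho(K)+n_i)y^{(i)}=\sum_j n_jy^{(j)}$, so $y^{(i)}\propto 1/(\rho(K)+n_i)$. Also $\rho(K)$ is the root of $\sum_i n_i/(\rho+n_i)=1$.

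\emph{Lower bound in (i).} First I will show that, when $n_1-n_r\le n/400$, every coordinate of $\boldsymbol{y}$ equals $n^{-1/2}\bigl(1+O((n_1-n_r)/n)\bigr)$. Then
\[
\rho(G)\ge \boldsymbol{y}^{T}A(G)\boldsymbol{y}=\rho(K)+2\sum_{uv\text{ added}}y_uy_v-2\sum_{uv\text{ deleted}}y_uy_v .
\]
Inserting the coordinate estimate gives $\rho(K)+\frac{2(\alpha_1-\alpha_2)}{n}-O\bigl(\frac{(\alpha_1+\alpha_2)\phi}{n^2}\bigr)$.

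\emph{Upper bound in (i).} This is the hard direction. Let $\boldsymbol{x}$ be the unit Perron vector of $G$. Then
\[
\rho(G)=\boldsymbol{x}^{T}A(K)\boldsymbol{x}+2\sum_{\text{added}}x_ux_v-2\sum_{\text{deleted}}x_ux_v\le\rho(K)+2\sum_{\text{added}}x_ux_v-2\sum_{\text{deleted}}x_ux_v .
\]
So I need $x_v=n^{-1/2}\bigl(1+O(\phi/n)\bigr)$ for every $v$: an upper bound for the added edges and a lower bound for the deleted ones. This entrywise control is the main obstacle. I will get it from the eigen-equations.
\begin{itemize}
\item Two vertices $u,w$ in the same class $V_i$ satisfy $\rho(G)(x_u-x_w)=\sum_{N_G(u)\triangle N_G(w)}\pm x_z$. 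Their neighbourhoods differ only in perturbed edges, and the total perturbation $\alpha_1+\alpha_2\le n/(20r)^3$ is tiny. With a crude bound $x_{\max}\le C/\sqrt n$ (obtained first from $\rho x_{\max}\le\sum_z x_z\le\sqrt n$ and $\rho(G)\ge cn$), this shows the class-wise values are nearly equal.
\item Averaging the equations over a class then reduces to the $r$-dimensional quotient system of $K$. Its solution is nearly uniform because $n_1-n_r\le n/400$.
\item Vertices incident to many perturbed edges must be handled separately. Since every vertex has degree $n-n_i\pm(\alpha_1+\alpha_2)$, I expect the error to stay within $O(\phi/n)$ relative.
\end{itemize}
Tracking constants gives the stated $56(\alpha_1+\alpha_2)\phi/n^2$.

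\emph{Part (ii).} Here I will first bound $\rho(K)$ against $\rho(T_{n,r})$ using $\sum_i n_i/(\rho+n_i)=1$. The plan is to show that moving one vertex from the largest class to the smallest strictly increases $\rho$, and to quantify the gain. Write $n_i=n/r+d_i$ with $\sum_i d_i=0$ and $\sum_i d_i^2\ge 2k^2$ (since $n_1-n_r\ge 2k$). A second-order expansion of the characteristic equation should give
\[
\rho(K)\le\rho(T_{n,r})-\frac{2(r-1)k^2}{rn}\Bigl(1-\frac{28r\psi}{n}\Bigr)^4 .
\]
The perturbation term is then handled exactly as in the upper bound of (i). The only change is that the entrywise estimate now has relative error $O(r\psi/n)$, since the classes are unbalanced by up to $3k$ in the relevant range; this produces the term $56(\alpha_1+\alpha_2)\cdot7r\psi/n^2$. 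If $n_1-n_r$ is very large, I would reduce to the case $n_1-n_r\approx 2k$ by monotonicity of $\rho(K)$ under balancing moves.
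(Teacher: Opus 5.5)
The paper gives no proof of this lemma. It is imported from Fang, Li, Lin and Ma, and only part (i) is used, to compute $\rho(G^\star)$ in the tightness example of Theorem~\ref{thm1.8C}. So your plan has to stand on its own.

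For (i) it does. Testing $A(G)$ on the explicit Perron vector of $K$ gives the lower bound. Testing $A(K)$ on the Perron vector $\boldsymbol{x}$ of $G$ reduces the upper bound to $x_v=n^{-1/2}\bigl(1+O(\phi/n)\bigr)$. Your within-class comparison plus averaging over classes delivers this, since every vertex lies on at most $\alpha_1+\alpha_2\le\phi/2$ perturbed edges, and the constants leave room below $56$.

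Part (ii), however, has a genuine gap. The hypothesis $n_1-n_r\ge 2k$ puts no upper bound on the imbalance. Once $n_1-n_r$ is much larger than $\psi$, the Perron vector of $G$ is \emph{not} uniform up to relative error $O(r\psi/n)$. Its class values are roughly proportional to $1/(\rho+n_i)$, so they spread by a relative amount of order $(n_1-n_r)/n$. For instance, take $k=\alpha_1=1$, $\alpha_2=0$, $n_1-n_r\approx n/1000$, and put the added edge inside the smallest class. Then $2x_ux_v$ exceeds $2/n$ by order $1/n$, while your accounting only allows $O(1/n^2)$.

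Monotonicity of $\rho(K)$ under balancing moves does not repair this. It controls only the term $\rho(K)$, whereas the perturbation term is evaluated on the eigenvector of the fixed graph $G$. Relative to a rebalanced $K'$, the graph $G$ differs by $\Theta(n)$ edges per moved vertex, so the lemma's hypotheses are lost. The legitimate way to reach $n_1-n_r\le 2k+1\le\psi$ is to \emph{enlarge} $k$, since the right-hand side of (ii) is decreasing in $k$ for $k\le n/(20r)^3$. This works only while $\lfloor (n_1-n_r)/2\rfloor\le n/(20r)^3$.

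In the remaining case, $k=\lfloor n/(20r)^3\rfloor$ and $n_1-n_r\ge 2k+2$, you need a different mechanism. First bound the perturbation crudely: $\rho(G)\le \boldsymbol{x}^{T}A(K)\boldsymbol{x}+2\alpha_1x_{\max}^2\le\rho(K)+32\alpha_1/n$ when $\rho(G)\ge n/4$ (otherwise (ii) is trivial). Then pay for this $O(1)$ loss with slack in the bound on $\rho(K)$. Such slack exists, because the factor $(1-28r\psi/n)^4$ gives away at least $\frac{2(r-1)k^2}{rn}\cdot\frac{84rk}{n}$, which here is of order $n/(20r)^9$, i.e.\ linear in $n$. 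To use it, bound $\rho(K)\le\rho(K'')$, where $K''$ is obtained by balancing moves down to gap $2k$ or $2k+1$; this is where monotonicity is rightly used. Then expand $\rho(K'')$ with relative error $O(k/n)$, with a constant well below $84r$.

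There is also a smaller slip in that expansion. The bound $\sum_i d_i^2\ge 2k^2$, measured from $n/r$, only yields $\rho(K)\lesssim(1-\frac1r)n-\frac{2(r-1)k^2}{rn}$. But $(1-\frac1r)n$ exceeds $\rho(T_{n,r})$ by about $\frac{(r-1)s(r-s)}{r^2n}$ when $n\equiv s\pmod r$. This is a loss of order $1/n$ that nothing absorbs when $k$ is small (e.g.\ $G=K$, $k=1$). What you need instead is that $\sum_i n_i^2$ exceeds its value for $T_{n,r}$ by at least $2k(n_1-n_r-k)\ge 2k^2$. This follows by moving $k$ vertices from $V_1$ to $V_r$.
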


Let $G$ be a non-empty graph,
and let $\bm{x}=(x_v)_{v\in V(G)}$ be a non-negative unit eigenvector of $G$ corresponding to $\rho(G)$.
An edge $uv\in E(G)$ is said to be \emph{light} if $x_{u}x_{v}\leq\frac{1}{8\sqrt{e(G)}}$.

\begin{definition}\label{def1.1}
Given an $\varepsilon$ with $0<\varepsilon<1$,
we define the following iterative procedure:

\vspace{1mm}
{\rm (i)} Initialize \(G_1\) to be the subgraph induced by $E(G)$.

\vspace{0.5mm}
{\rm (ii)} If \(G_i\) contains no light edges, stop.

\vspace{0.5mm}
{\rm (iii)} If \(G_i\) has a light edge \(u_iv_i\), set $G_{i+1}$ to be the subgraph induced by $E(G_i)\setminus\{u_iv_i\}$.

\vspace{1mm}\noindent
This produces a sequence of graphs $G_1\supset \cdots\supset G_\ell$ and a sequence of edges \(u_1v_1,\dots,u_{\ell-1}v_{\ell-1}\),
where each \(u_iv_i\) is a light edge of \(G_i\).
The process terminates when either \(G_\ell\) has no light edges or $\ell=\lfloor\varepsilon m\rfloor$.
We call the final graph $G_{\ell}$ an $\varepsilon$-subgraph of $G$.
\end{definition}

\begin{lem}\label{lem2.8C}
Let $G$ be a graph of sufficiently large size $m$ such that $\rho(G) \geq \sqrt{am}$,
where $0.81\leq a\leq2$. Let $G_i$ be defined as in Definition \ref{def1.1}.
Then, the following statements hold:

\vspace{1mm}
{\rm (i)} For $i\in [\ell]$, we have $\rho(G_{i})
\geq \sqrt{a\cdot e(G_{i})}+\frac{i-1}{5\sqrt{m}}$.

\vspace{0.5mm}
{\rm (ii)} For $i\in [\ell]$, we have
$\Phi(G_{i})-\Phi(G_{1})\geq\frac{i-1}{5m}$, where $\Phi(G_{1}):=\rho(G_{1})\big/\sqrt{e(G_{1})}$.

\vspace{0.5mm}
{\rm (iii)} If $a>1$ and $G$ admits no light edges and isolated vertices,
then $\big(\frac{a-1}{8}\big)^5m^{-1/4}\!<\!x_v\!<\!\big(\frac{8}{a-1}\big)^4m^{-1/4}$ for any $v\in V(G)$.
Furthermore, $|G|\!\leq\!\big(\frac{8}{a-1}\big)^{10}\sqrt{m}$ and $\delta(G)\!\geq\!\big(\frac{a-1}{8}\big)^{9}\sqrt{m}$.
\end{lem}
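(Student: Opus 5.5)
The plan is to compare, edge by edge, the loss in $\rho$ against the loss in $\sqrt{a\cdot e}$. Write $e_i=e(G_i)$; then $e_{i+1}=e_i-1$ and $m-\varepsilon m\le e_i\le m$. Let $\bm{x}$ be the non-negative unit Perron vector of $G_i$, and let $u_iv_i$ be the light edge deleted, so $x_{u_i}x_{v_i}\le \frac{1}{8\sqrt{e_i}}$. By the Rayleigh principle,
$$\rho(G_{i+1})\ \ge\ \bm{x}^{T}A(G_{i+1})\bm{x}=\rho(G_i)-2x_{u_i}x_{v_i}\ \ge\ \rho(G_i)-\frac{1}{4\sqrt{e_i}}.$$
On the other hand, since $a\ge 0.81$,
$$\sqrt{a e_i}-\sqrt{a(e_i-1)}=\frac{a}{\sqrt{ae_i}+\sqrt{a(e_i-1)}}\ \ge\ \frac{\sqrt a}{2\sqrt{e_i}}\ \ge\ \frac{0.45}{\sqrt{e_i}}.$$
Hence the quantity $\rho(G_i)-\sqrt{a e_i}$ increases by at least $\frac{0.45-0.25}{\sqrt{e_i}}\ge\frac{1}{5\sqrt m}$ at each step. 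Starting from $\rho(G_1)=\rho(G)\ge\sqrt{am}$ with $e_1=m$ (isolated vertices are irrelevant), induction on $i$ gives (i).

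For (ii), I would use the same one-step inequality together with $\rho(G_i)\ge\sqrt{ae_i}$ from (i). This gives
$$\Phi(G_{i+1})-\Phi(G_i)\ \ge\ \rho(G_i)\Big(\frac{1}{\sqrt{e_i-1}}-\frac{1}{\sqrt{e_i}}\Big)-\frac{1}{4\sqrt{e_i}\sqrt{e_i-1}}.$$
The first term is at least $\frac{\rho(G_i)}{2e_i^{3/2}}\ge \frac{\sqrt a}{2e_i}\ge\frac{0.45}{e_i}$. The second term is at most $\frac{0.25+o(1)}{e_i}$. So each step gains at least $\frac{0.2-o(1)}{e_i}\ge\frac{1}{5m}$ for $m$ large; the constant $0.2$ has slack because $e_i\le m$. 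Summing over the steps gives (ii).

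For (iii), every edge $uv$ satisfies $x_ux_v>\frac{1}{8\sqrt m}$, and $\rho^2\ge am$ with $a>1$. The key step, and the one I expect to be the main obstacle, is an upper bound on $M=\max_v x_v$ of order $\big(\frac{8}{a-1}\big)^4m^{-1/4}$.

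\begin{itemize}
\item I would first write, for the maximizing vertex $v$,
$$\rho^2 M=\sum_{u\sim v}\sum_{w\sim u}x_w .$$
I would then split the inner sum according to whether $x_w\ge t$ or $x_w<t$ for a threshold $t\asymp m^{-1/4}$. There are at most $t^{-2}$ vertices with $x_w\ge t$, since $\sum_w x_w^2=1$. The walks landing in $\{w: x_w<t\}$ contribute at most $t$ times the number of edges, roughly $t\,m$.
\item Next I would use the no-light-edge condition: a vertex $w$ with $x_w<t$ has all its neighbours satisfying $x_u>\frac{1}{8\sqrt m\,t}$. This caps how much weight such walks can carry, and the surplus $\rho^2-m\ge(a-1)m$ then forces $M$ to be small.
\item This bookkeeping is where the powers of $\frac{8}{a-1}$ arise. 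I would try the cruder route first: apply Cauchy--Schwarz to $\rho M=\sum_{u\sim v}x_u\le\sqrt{d(v)}$, combined with $\rho^2\ge am$ and the edge-product constraint.
\end{itemize}

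Once $M$ is bounded, the remaining claims in (iii) are routine.

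\begin{itemize}
\item Every $v$ has a neighbour $u$ (there are no isolated vertices), so $x_v>\frac{1}{8\sqrt m\,x_u}\ge\frac{1}{8\sqrt m\,M}$. This yields the lower bound $\big(\frac{a-1}{8}\big)^5m^{-1/4}$.
\item From $1=\sum_v x_v^2\ge |G|\min_v x_v^2$ we get $|G|\le\big(\frac{8}{a-1}\big)^{10}\sqrt m$.
\item From $\rho\, x_v=\sum_{u\sim v}x_u\le d(v)M$, together with $\rho\ge\sqrt m$ and the lower bound on $x_v$, we get $\delta(G)\ge\big(\frac{a-1}{8}\big)^9\sqrt m$.
\end{itemize}
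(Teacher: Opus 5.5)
Parts (i) and (ii) follow the paper. You set the Rayleigh bound $\rho(G_{i+1})\ge\rho(G_i)-\frac{1}{4\sqrt{e_i}}$ against the drop $\frac{\sqrt a}{2\sqrt{e_i}}\ge\frac{0.45}{\sqrt{e_i}}$ of $\sqrt{ae}$. One bookkeeping fix is needed in (ii). At $i=1$ you have $e_1=m$, so ``slack from $e_i\le m$'' cannot absorb your $-o(1)$. Keep the common factor instead. The first term is at least $\frac{\rho(G_i)}{2\sqrt{e_i}}\cdot\frac{1}{\sqrt{e_i(e_i-1)}}\ge\frac{0.45}{\sqrt{e_i(e_i-1)}}$, and the second equals $\frac{0.25}{\sqrt{e_i(e_i-1)}}$. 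So each step gains more than $\frac{1}{5e_i}\ge\frac{1}{5m}$. The paper does the equivalent by first proving $\rho(G_{i+1})\sqrt{e_i}-\rho(G_i)\sqrt{e_i-1}\ge\frac15$.

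The genuine gap is in (iii). The bound $M=\max_v x_v<\big(\frac{8}{a-1}\big)^4m^{-1/4}$ is the entire content of that part, and you do not establish it. Your ``cruder route'' cannot work. Cauchy--Schwarz gives $d(v)\ge\rho^2M^2\ge amM^2$. The no-light-edge condition gives $x_u>\frac{1}{8\sqrt m\,M}$ for every $u\sim v$, hence $d(v)<64mM^2$. These two bounds are compatible for every value of $M$. More fundamentally, the light-edge condition is not what makes $M$ small: $K_{1,m}$ has no light edges (its centre--leaf products are $\frac{1}{2\sqrt m}$), yet $M=1/\sqrt2$. Only the surplus $\rho^2\ge am$ with $a>1$ can force $M$ down, and the paper's argument for this step never uses the light-edge condition. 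In your first bullet, the walks ending at vertices with $x_w<t$ are handled correctly. The walks ending in $L_t=\{w:x_w\ge t\}$, however, are left uncontrolled, and ``capping'' them via light edges is the wrong tool.

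The missing step is to charge the walk count to edges. Since at most $d(w)$ walks of length two run from $v$ to $w$,
\[
\rho^2M\le\sum_{w}d(w)x_w=\sum_{uw\in E(G)}(x_u+x_w).
\]
An edge contributes at most $M+t$ unless both ends lie in $L_t$, in which case it contributes at most $(M+t)+M$. As $|L_t|t^2\le1$, there are at most $\frac12 t^{-4}$ such edges, so $\rho^2M\le m(M+t)+\frac12t^{-4}M$. With $\rho^2\ge am$ and $t=((a-1)m)^{-1/4}$, this yields $M\le\frac{2t}{a-1}=2(a-1)^{-5/4}m^{-1/4}$, well within the required bound. So your threshold framework works once this counting step is added.

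The paper runs the same edge-splitting with the relative threshold $L=\{u:x_u\ge\varepsilon^2M\}$, where $\varepsilon=\frac{a-1}{8}$. Edges not inside $L$ contribute at most $(1+\varepsilon^2)M$, and $|L|\le\varepsilon^{-4}M^{-2}$. Hence $\rho^2\le(1+\varepsilon^2)m+\varepsilon^{-8}M^{-4}$, which contradicts $\rho^2\ge(1+8\varepsilon)m$ when $M\ge\varepsilon^{-4}m^{-1/4}$. Once $M$ is bounded, your derivations of the lower bound on $x_v$, of $|G|$, and of $\delta(G)$ coincide with the paper's.
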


\begin{proof}
(i) The proof proceeds by induction on $i$.
If $i=1$, then the statement holds trivially. Now assume $i\geq 2$,
and let $\bm{y}$ be a non-negative unit eigenvector corresponding to $\rho(G_{i-1})$.
Since $y_{u_{i-1}}y_{v_{i-1}}\leq\frac{1}{8\sqrt{e(G_{i-1})}}$,
it follows that
\begin{align}\label{align-03F}
\rho(G_{i\!-\!1})&=\bm{y}^\top A(G_{i\!-\!1})\bm{y}=\bm{y}^\top A(G_{i})\bm{y}\!+\!2y_{u_{i-1}}y_{v_{i-1}}
\leq \rho(G_i)\!+\!\frac{1}{4\sqrt{e(G_{i\!-\!1})}}.
\end{align}
Since $a\geq0.81$ and $e(G_{i-1})-e(G_{i})=1$,
we have
\begin{align}\label{align-04F}
\sqrt{a\cdot e(G_{i-1})}\!-\!\sqrt{a\cdot e(G_{i})}=\frac{\sqrt{a}}{\sqrt{e(G_{i\!-\!1})}\!+\!\sqrt{e(G_i)}}
\geq\frac{9}{20\sqrt{e(G_{i\!-\!1})}}.
\end{align}
Moreover, by the induction hypothesis, we have $\rho(G_{i-1})\geq \sqrt{a \cdot e(G_{i-1})}+\frac{i-2}{5\sqrt{m}}$.
Combining this with \eqref{align-03F} and \eqref{align-04F}, we deduce that
\begin{align*}
\rho(G_{i})
\geq \sqrt{a\cdot e(G_{i\!-\!1})}\!+\!\frac{i\!-\!2}{5\sqrt{m}}\!-\!\frac{1}{4\sqrt{e(G_{i\!-\!1})}}
\geq \sqrt{a\cdot e(G_{i})}\!+\!\frac{i\!-\!2}{5\sqrt{m}}\!+\!\frac{1}{5\sqrt{e(G_{i\!-\!1})}}.
\end{align*}
Noting that $e(G_{i-1})\leq m$,
we can further derive that $\rho(G_{i})\geq \sqrt{a\cdot e(G_{i})}+\frac{i-1}{5\sqrt{m}}$,
as desired.

(ii) In light of inequality \eqref{align-03F}, we know that $\rho(G_{i+1})\geq \rho(G_{i})-\frac{1}{4\sqrt{e(G_i)}}$.
Thus, we obtain:
\begin{align*}
\rho(G_{i+1})\sqrt{e(G_{i})}\!-\!\rho(G_{i})\sqrt{e(G_{i+1})}
&\geq \rho(G_{i})\Big(\sqrt{e(G_{i})}\!-\!\sqrt{e(G_{i})\!-\!1}\Big)\!-\!\frac{1}{4}\\
&\geq \frac{\rho(G_{i})}{2\sqrt{e(G_i)}}\!-\!\frac{1}{4}
\geq \frac{1}{5},
\end{align*}
where the last inequality follows from $\rho(G_i)\geq0.9\sqrt{e(G_i)}$ as indicated in (i).
It follows that
\begin{align*}
\Phi(G_{i+1})\!-\!\Phi(G_{i})
=\frac{\rho(G_{i+1})\sqrt{e(G_{i})}
\!-\!\rho(G_{i})\sqrt{e(G_{i+1})}}{\sqrt{e(G_{i+1})}\sqrt{e(G_{i})}}
\geq \frac{1}{5m}.
\end{align*}
By iterating this inequality, we can deduce that
$\Phi(G_i)-\Phi(G_1)\geq (i\!-\!1)/5m$, as required.

(iii) Let $x_{u^*}=\max_{v\in V(G)}x_v$ and $L=\{u\in V(G): x_u\geq \varepsilon^{2}x_{u^*}\}$.
Define $\varepsilon:=\frac{a-1}{8}$.
Since $1<a\leq2$, it follows that $0<\varepsilon\leq\frac{1}{8}$.
We first prove that $x_{u^*}<\varepsilon^{-4} m^{-1/4}$.
Suppose, for the sake of contradiction, that $x_{u^*}\geq \varepsilon^{-4} m^{-1/4}$.
Since $|L|\varepsilon^{4}x_{u^*}^2\leq\sum_{u\in L}x_u^2\leq\sum_{u\in V(G)}x_u^2=1$, we have $|L|\leq \varepsilon^{4}\sqrt{m}$.
Observe that for every $u\in V(G)$, there are at most $d_G(u)$ walks of length two from $u^*$ to $u$ in $G$.
From this observation, we deduce that
\begin{align*}
\rho^2(G) x_{u^*}
&\leq \!\!\!\sum_{u\in V(G)}\!\!\!d_G(u)x_u=\!\!\!\sum_{uv\in E(G)}\!\!\!(x_u\!+\!x_v)
\leq \!\!\!\sum_{uv\in E(G[L])}\!\!\!(x_u\!+\!x_v)+\!\!\!\sum_{uv\in E(G)\setminus E(G[L])}\!\!\!(x_u\!+\!x_v)\nonumber\\
&\leq 2\binom{|L|}{2}x_{u^*}+ \big(1+\varepsilon^2\big)mx_{u^*}
<\big(1+\varepsilon^2+\varepsilon^8 \big)mx_{u^*},
\end{align*}
which contradicts the assumption that $\rho^2(G)\geq am=(1+8\varepsilon)m$.
Hence, $x_{u^*}<\varepsilon^{-4} m^{-1/4}$.

Since $G$ contains no light edges,
we have $x_ux_v>\frac{1}{8}m^{-1/2}>\varepsilon m^{-1/2}$ for each $uv\in E(G)$.
Together with the fact that $x_{u^*}<\varepsilon^{-4}m^{-1/4}$ and the assumption that $\delta(G)\geq1$,
this implies that $\min_{v\in V(G)}x_v>\varepsilon^{5}m^{-1/4}.$
Moreover, since $\bm{x}$ is a unit eigenvector of $G$,
we have
$$\big|G\big|\cdot\big(\varepsilon^{5}m^{-1/4}\big)^2\leq \sum_{v\in V(G)}x_v^2=1.$$
Thus, we obtain the inequality $|G|\leq \varepsilon^{-10}\sqrt{m}$, as desired.

Recall that $x_{u^*}<\varepsilon^{-4}m^{-1/4}$. For any vertex $v\in V(G)$, we have
$x_v\geq \varepsilon^{5}m^{-1/4}>\varepsilon^{9}x_{u^*}$.
It follows that
$$\sqrt{m}\cdot \varepsilon^{9}x_{u^*}\leq\rho(G)x_{v}=\sum_{u\in N_G(v)}\!\!x_u\leq d_G(v)x_{u^*}.$$
This implies that $d_G(v)\geq\varepsilon^{9}\sqrt{m}$ for each $v\in V(G)$.
Thus, we have
$\delta(G)\geq \varepsilon^{9}\sqrt{m}$.
\end{proof}

\section{Proof of Theorem \ref{thm1.3C}}

The method of \emph{progressive induction}, introduced by Simonovits \cite{Simonovits1968},
is a powerful technique used to demonstrate statements that hold only for sufficiently large $n$.
It resembles mathematical induction and the Euclidean algorithm, combining elements of both in a certain sense.

\begin{lem}[Simonovits \cite{Simonovits1968}] \label{lem3.1}
Let $\mathfrak{U}=\cup_{i=1}^{\infty}\mathfrak{U}_i$ be a set of given elements,
where $\mathfrak{U}_i$ are disjoint finite subsets of $\mathfrak{U}$.
Let $B$ be a condition or property defined on $\mathfrak{U}$
(the elements of $\mathfrak{U}$ may either satisfy or not satisfy B).
Let $\varphi(a)$ be a function defined on $\mathfrak{U}$ such that $\varphi(a)$ is a non-negative integer, and
the following conditions hold:

\vspace{1mm}
{\rm (i)}
If $a$ satisfies $B$, then $\varphi(a)=0$;

\vspace{0.5mm}
{\rm (ii)} There exists an $M_0$ such that, for all $m>M_0$ and all $a\in \mathfrak{U}_m$,
either $a$ satisfies $B$, or there exist $m^*$ and $a^*$ with
$\frac{m}{2}<m^*<m$, $a^*\in \mathfrak{U}_{m^*}$, and $\varphi(a)<\varphi(a^*)$.

\vspace{1mm}\noindent
Then, there exists an $m_0$ such that for all $m>m_0$, every $a\in \mathfrak{U}_m$ satisfies the property $B$.
\end{lem}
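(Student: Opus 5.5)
We argue by contradiction, following a failing element down a chain of smaller and smaller indices, along which $\varphi$ strictly increases. The key observation is that this chain must run for many steps before its index drops into the finite range $\{1,\dots,M_0\}$, while $\varphi$ is bounded on that range.

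\emph{Setting the threshold.} Put
$$C:=\max\Big\{\varphi(b): b\in \textstyle\bigcup_{i\le M_0}\mathfrak{U}_i\Big\}.$$
This is finite because each $\mathfrak{U}_i$ is finite and $\varphi$ takes non-negative integer values. (If the union is empty, take $C=0$.) We choose $m_0:=2^{C+1}M_0$ and claim that it works.

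\emph{Building the chain.} Suppose, for contradiction, that some $a_0\in\mathfrak{U}_{m}$ with $m>m_0$ fails $B$. Set $m_0':=m$ and build a chain inductively. While the current element $a_j\in\mathfrak{U}_{m_j'}$ satisfies $m_j'>M_0$ and fails $B$, condition (ii) gives $a_{j+1}\in\mathfrak{U}_{m_{j+1}'}$ with
$$\tfrac{m_j'}{2}<m_{j+1}'<m_j' \quad\text{and}\quad \varphi(a_{j+1})>\varphi(a_j).$$

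\emph{Properties of the chain.} We record three facts.
\begin{itemize}
\item[(a)] Since $\varphi$ is integer-valued and $\varphi(a_0)\ge 0$, induction gives $\varphi(a_j)\ge j$ for every $j$. In particular $\varphi(a_j)\ge 1$ for $j\ge1$, so by condition (i) every $a_j$ with $j\ge 1$ fails $B$. Hence the chain can only stop because the index has dropped to $m_J'\le M_0$.
\item[(b)] It does stop, since $m_j'$ is a strictly decreasing sequence of positive integers. So there is a first index $J$ with $m_J'\le M_0$.
\item[(c)] Since $m_j'>m_{j-1}'/2$, we get $m_J'>m/2^{J}$. Combined with $m_J'\le M_0$ this gives $2^J>m/M_0>2^{C+1}$, so $J\ge C+1$.
\end{itemize}

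\emph{The contradiction.} The element $a_J$ lies in $\bigcup_{i\le M_0}\mathfrak{U}_i$, so $\varphi(a_J)\le C$. On the other hand, fact (a) gives $\varphi(a_J)\ge J\ge C+1$. This contradiction shows that every $a\in\mathfrak{U}_m$ with $m>m_0$ satisfies $B$.

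The only delicate point is the bookkeeping in facts (a)–(c). We need $\varphi$ to gain at least $1$ per step, which uses integrality, and we need the index to shrink by at most a factor of $2$ per step, which uses the lower bound $m^*>m/2$. Together these force the chain to be long, at least $\log_2(m/M_0)$ steps, before it reaches the finite range where $\varphi$ is bounded by $C$.
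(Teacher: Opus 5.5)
Your proof is correct. It is the standard progressive-induction argument: bound $\varphi$ by $C$ on the finitely many classes $\mathfrak{U}_i$ with $i\le M_0$, then note that a chain of failing elements started at index $m>2^{C+1}M_0$ gains at least $1$ in $\varphi$ per step and at most halves its index per step, so it must take more than $C+1$ steps to reach that range, which contradicts the bound $C$ there. The paper gives no proof of this lemma; it cites Simonovits and only applies the lemma in the proof of Theorem~\ref{thm1.3C}, so there is nothing further to compare (apart from the harmless clash between your starting index $m_0'$ and the threshold $m_0$).
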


Let $F$ be a graph of order $f$ satisfying $\chi(F)=r+1\geq2$,
and let $G$ be a graph of sufficiently large size $m$ such that $N_F(G)=o(m^{f/2})$.
Theorem \ref{thm1.3C} asserts that
 $$\rho(G)\leq \left\{
                                       \begin{array}{ll}
                                         \sqrt{\big(1-\frac{1}{r}+o(1)\big)2m},  & \hbox{if $r\geq 2$;} \\
                                         \sqrt{\big(1+o(1)\big)m}, & \hbox{if $r=1$ and $\beta'(F)\geq 2$.}
                                       \end{array}
                                     \right.$$
In what follows, we present the proof of Theorem \ref{thm1.3C} by means of Lemma \ref{lem3.1}.

\begin{proof}[\textbf{Proof of Theorem \ref{thm1.3C}}]
we will divide the proof into two cases.

\vspace{1mm}
\textbf{Case 1. $\chi(F)=r+1\geq 3$.}
Let $\varepsilon$ be a sufficiently small positive number, i.e., $0<\varepsilon\ll 1$,
and let $\mathfrak{U}_m$ denote a family of graphs defined as follows:
$$\mathfrak{U}_m:=\big\{G:~e(G)=m, \delta(G)\geq 1, N_F(G)=o(m^{\frac{f}{2}})\big\}.$$
Given the arbitrariness of $\varepsilon$, it suffices to show that
$\rho(G)\leq\sqrt{(1\!-\!\frac{1}{r}\!+\!\varepsilon)2m}$ for any $G\in \mathfrak{U}_m$.

Let $B$ be the property on $\mathfrak{U}=\cup_{i=1}^{\infty}\mathfrak{U}_i$ requiring that
$\rho(G)\leq \sqrt{(1\!-\!\frac{1}{r}\!+\!\varepsilon)2e(G)}$.
Recall that $\Phi(G):=\rho(G)\big/\sqrt{e(G)}$.
For every graph $G\in \mathfrak{U}$, we define
$$\varphi(G):=\max\Big\{0,~\Big\lfloor\Big(\Phi(G)
\!-\!\sqrt{2\big(1\!-\!\frac{1}{r}\!+\!\varepsilon\big)}\Big)\cdot\frac{6}{\varepsilon}\Big\rfloor\Big\}.$$

Clearly,
$\varphi(G)$ is a non-negative integer,
and if $G$ satisfies the property $B$, then $\varphi(G)=0$.
Thus, Condition (i) of Lemma \ref{lem3.1} holds.
If Condition (ii) of Lemma \ref{lem3.1} also holds, then for sufficiently large $m$,
every graph $G\in\mathfrak{U}_m$ satisfies the property $B$, i.e., $\rho(G)\leq \sqrt{(1\!-\!\frac{1}{r}\!+\!\varepsilon)2m}$.
This completes the proof.

Therefore,
it suffices to prove that Condition (ii) of Lemma \ref{lem3.1} holds.
Specifically, we need to show that
if $G\in \mathfrak{U}_m$,
then either $G$ satisfies $B$,
or there exist an integer $m^*\in (\frac{m}{2},m)$ and a graph $G^*\in \mathfrak{U}_{m^*}$ such that $\varphi(G)<\varphi(G^*)$.

Select an arbitrary graph $G^\star\in  \mathfrak{U}_m$.
If $G^\star$ satisfies the property $B$, then we are done.
Now, assume that $G^\star$ does not satisfy $B$, which means that
$\rho(G^\star)>\sqrt{(1\!-\!\frac{1}{r}\!+\!\varepsilon)2m}$.
Our goal is to find an integer $m^*\in (\frac{m}{2},m)$ and a graph $G^*\in \mathfrak{U}_{m^*}$ such that $\varphi(G^{\star})<\varphi(G^{*})$.

Set $a:=2(1\!-\!\frac{1}{r}\!+\!\varepsilon)$, define $G_1:=G^\star$, and construct $G_i$ for $1\leq i \leq \ell$ as described  in Definition \ref{def1.1}.
Since $r\geq 2$ and $0<\varepsilon\ll1$, we have
$1+2\varepsilon\leq a<2$ and $\frac{8}{a-1}\leq\frac{4}{\varepsilon}$.
Recall that $\rho(G^\star)>\sqrt{am}$.
By part (i) of Lemma \ref{lem2.8C}, for each $i\in \{1,\ldots,\ell\}$,
we know that
\begin{align}\label{align-05F}
\rho(G_{i})\geq \sqrt{2(1\!-\!\frac{1}{r}\!+\!\varepsilon)e(G_{i})}+\frac{i\!-\!1}{5\sqrt{m}}.
\end{align}

We first prove that \(\ell=\lfloor\varepsilon m\rfloor\).
Suppose, to the contrary, that $\ell<\lfloor\varepsilon m\rfloor$.
Then, by Definition \ref{def1.1}, $G_{\ell}$ admits no light edges and isolated vertices.
Denote $m'=e(G_{\ell})$ and $n'=|G_{\ell}|$.
Clearly, $m'=m-\ell+1=\Theta(m)$, and from inequality (\ref{align-05F}),
we know that $n'\geq\rho(G_{\ell})\geq\sqrt{(1-\frac{1}{r}+\varepsilon)2m'}$.
On the other hand, by (iii) of Lemma \ref{lem2.8C},
we have $n'\leq(\frac{8}{a-1})^{10}\sqrt{m'}\leq(\frac{4}{\varepsilon})^{10}\sqrt{m'}$.
Hence, $n'=\Theta\big(\sqrt{m'}\big)$ because $\varepsilon$ is a constant. Recall that $N_F(G)=o\big(m^{f/2}\big).$
It follows that $G_{\ell}$ contains at most $o\big({n'}^{f}\big)$ copies of $F$.

By Lemma \ref{lem2.1C},
removing at most $\varepsilon^{24}{n'}^2$ edges from \(G_{\ell}\) results in an \(F\)-free graph
$G''$ with $m''$ edges.
By Lemma \ref{lem2.3C}, we can further delete
at most another $\varepsilon^{24}{n'}^2$ edges from \(G''\) to obtain a \(K_{r+1}\)-free graph
$G'''$ with $m'''$ edges.
Since $n'\leq \big(\frac{4}{\varepsilon}\big)^{10}\sqrt{m'}$ and $\varepsilon$ is sufficiently small,
the total number of removed edges satisfies $2\varepsilon^{24}{n'}^2\leq \varepsilon^{3} m'$.

For two edge-disjoint subgraphs $H'$ and $H''$ of $H$ such that $E(H)=E(H')\cup E(H'')$,
it is known that $\rho(H)\leq \sqrt{2e(H)}$ and $\rho(H)\leq \rho(H')+\rho(H'')$.

Let $H_0$ be the subgraph induced by those removed edges from $E(G_\ell)\setminus E(G''')$.
Then, $\rho(H_0)\leq\sqrt{2e(H_0)}\leq \sqrt{2\varepsilon^{3} m'}$.
Since $G'''$ is $K_{r+1}$-free,
by \eqref{align-02F}, we also obtain $\rho(G''')\leq \sqrt{(1\!-\!\frac{1}{r})2m'''}$, where $m'''\leq m'$.
Thus, we have
\begin{align*}
\rho(G_\ell)\leq \rho(G''')\!+\!\rho(H_0)\leq
\sqrt{\big(1\!-\!\frac{1}{r}\big)2m'}\!+\!\sqrt{2\varepsilon^{3} m'}<\sqrt{\big(1\!-\!\frac{1}{r}\!+\!\varepsilon\big)2m'},
\end{align*}
where the last inequality holds because $\varepsilon$ is sufficiently small.
However, this contradicts inequality (\ref{align-05F}).
Therefore, we conclude that \(\ell=\lfloor\varepsilon m\rfloor\).

In view of (\ref{align-05F}), we have $\Phi(G_1)\geq\sqrt{2(1\!-\!\frac{1}{r}\!+\!\varepsilon)}$.
Now, by part (ii) of Lemma \ref{lem2.8C} and the fact that \(\ell=\lfloor\varepsilon m\rfloor\),
we further obtain
$\Phi(G_\ell)\!-\!\Phi(G_1)\geq\frac{\ell-1}{5m}>\frac\varepsilon6$.
Consequently,
$$\Big\lfloor\Big(\Phi(G_\ell)
\!-\!\sqrt{2\big(1\!-\!\frac{1}{r}\!+\!\varepsilon\big)}\Big)\cdot\frac{6}{\varepsilon}\Big\rfloor
>\Big\lfloor\Big(\Phi(G_1)
\!-\!\sqrt{2\big(1\!-\!\frac{1}{r}\!+\!\varepsilon\big)}\Big)\cdot\frac{6}{\varepsilon}\Big\rfloor\geq 0.$$
It follows that $\varphi(G_\ell)>\varphi(G_1)\geq 0$.
Define $G^*:=G_\ell$ and $m^*:=e(G_\ell)$.
Note that $G_1=G^\star$ and $m^*=m-\ell+1$, where $\ell=\lfloor\varepsilon m\rfloor$.
Consequently, we obtain that $m^*\in (\frac{m}{2},m)$, $G^*\in \mathfrak{U}_{m^*}$, and $\varphi(G^\star)<\varphi(G^*)$, as desired.

\vspace{1mm}
\textbf{Case 2. $\chi(F)=r+1=2$ and $\beta'(F)\geq 2$.}
In this case, $F$ is a bipartite graph of order $f$ with an independent covering of size $\beta'(F)$.
For convenience, we define $a=\beta'(F)$ and $b=f-a$.
Let $F_0$ be the graph obtained from $K_{a,b}$ by
adding an edge within the color class of size $a$.
Clearly, $F$ is a spanning subgraph of $F_0$.
For any copy $F'$ of $F$ in $G$, there are at most $N_{F_0}(K_f)$ copies of $F_0$ containing $F'$,
which implies that $N_{F_0}(G)\leq N_{F_0}(K_f) N_F(G)$.
Recall that $N_F(G)=o(m^{f/2})$.
Thus, we have $N_{F_0}(G)=o(m^{f/2})$.
Noting that $\chi(F_0)=3$,
we can apply the result obtained from our earlier discussion in Case 1 to the graph $F_0$.
It follows that $$\rho(G)\leq \sqrt{\Big(1\!-\!\frac1{\chi(F_0)\!-\!1}\!+\!o(1)\Big)2m}
=\sqrt{\big(1\!+\!o(1)\big)m}.$$

This completes the proof of Theorem \ref{thm1.3C}.
\end{proof}

\section{Proofs of Theorems \ref{thm1.6C}, \ref{thm1.5C}, and \ref{thm1.4C}}

In the present section, we provide the proof of Theorem \ref{thm1.6C}, a central result pertaining to edge-spectral supersaturation stability,
along with its applications to Theorems \ref{thm1.5C} and \ref{thm1.4C}.

\begin{proof}[\textbf{Proof of Theorem \ref{thm1.6C}}]
Let $\varepsilon$ be a sufficiently small positive number, i.e., $0<\varepsilon\ll 1$.
For convenience, we establish the following hierarchy:
\begin{align}\label{align-06F}
0<\delta\ll \varepsilon_{\ref{def1.1}}\ll \delta_{\ref{lem2.5C}}\lll \varepsilon_{\ref{lem2.5C}}
\ll \delta_{\ref{lem2.4C}} \lll \varepsilon_{\ref{lem2.4C}}\ll \varepsilon\ll 1,
\end{align}
where
$\varepsilon_{\ref{def1.1}}$ is derived from Definition \ref{def1.1},
$\delta_{\ref{lem2.5C}}$ and $\varepsilon_{\ref{lem2.5C}}$
are taken from Lemma \ref{lem2.5C},
and $\delta_{\ref{lem2.4C}}$ and $\varepsilon_{\ref{lem2.4C}}$
are obtained from Lemma \ref{lem2.4C}.
The relation \( \varepsilon \ll \eta \) means that \( \varepsilon \)
is taken to be a sufficiently small function of \(\eta\) in order to satisfy every inequality required in the forthcoming arguments.
The notation \( \delta\lll \varepsilon\) indicates that we are invoking a theorem whose input is \(\varepsilon\) and whose output is $\delta$
(the quantification being \( \forall \varepsilon\ \exists \delta \));
furthermore, we assume that $\delta$ is much smaller than $\varepsilon$.
Construct $G_i$ for $1\leq i \leq \ell$ as described in Definition \ref{def1.1}.
From Definition \ref{def1.1}, we know that $\ell\leq\lfloor\varepsilon_{\ref{def1.1}} m\rfloor$,
and that $G_1$ is obtained from $G$ by deleting possibly isolated vertices.
Since $N_F(G_\ell)\leq N_F(G)=o(m^{f/2})$ and $e(G_\ell)=m\!-\!\ell\!+\!1=\Theta(m)$,
we have $N_F(G_\ell)=o\big((e(G_\ell))^{f/2}\big).$
Since it trivially holds that $e(G_\ell)\leq \frac12|G_\ell|^2$,
we further derive that
\begin{align}\label{align-07H}
N_F(G_\ell)=o\big(|G_\ell|^f\big).
\end{align}
By Theorem \ref{thm1.3C}, we conclude that for $r\geq2$, the following inequality holds:
\begin{align}\label{align-07F}
\rho(G_\ell) \leq \sqrt{\big(1\!-\!\frac{1}{r}\!+\!o(1)\big)2e(G_\ell)}.
\end{align}

(i) We first address the case when $r\geq 3$ and $\rho(G)\geq\sqrt{(1\!-\!\frac{1}{r}\!-\!\delta)2m}$.
Recall that $\ell\leq\lfloor\varepsilon_{\ref{def1.1}} m\rfloor$.
We now claim that
\begin{align*}
\ell<\lfloor\varepsilon_{\ref{def1.1}} m\rfloor.
\end{align*}
Assume, for the sake of contradiction, that $\ell=\lfloor\varepsilon_{\ref{def1.1}} m\rfloor$.
Since $\rho(G_1)\geq\sqrt{(1\!-\!\frac{1}{r}\!-\!\delta)2e(G_1)}$ and $m$ is sufficiently large,
a straightforward calculation shows that
\begin{align}\label{align-08F}
\Phi(G_1)=\frac{\rho(G_{1})}{\sqrt{e(G_1)}}
\geq \sqrt{2\big(1\!-\!\frac{1}{r}-\delta\big)}
\geq \sqrt{2\big(1\!-\!\frac{1}{r}\!+\!\frac{\varepsilon_{\ref{def1.1}}}{6}\big)}\!
-\!\frac{\varepsilon_{\ref{def1.1}}}{6}.
\end{align}

Let $\bm{y}$ be a unit eigenvector corresponding to $\rho(G_i)$, where $i\in \{1,\dots,\ell-1\}$.
Then $E(G_{i+1})=E(G_i)\setminus\{u_iv_i\}$, where $u_iv_i$ is a light edge of $G_i$.
Hence, $y_{u_i}y_{v_i}\leq\frac{1}{8\sqrt{e(G_i)}}$, and thus
\begin{align*}
\rho(G_{i})&=\bm{y}^\top A(G_{i})\bm{y}
\leq\bm{y}^\top A(G_{i+1})\bm{y}+\frac{1}{4\sqrt{e(G_i)}}.
\end{align*}

Since $r\geq 3$ and $0<\varepsilon_{\ref{def1.1}}\ll1$,
It follows from \eqref{align-08F} that $\rho(G_1)\geq \sqrt{m}$.
Applying part (ii) of Lemma \ref{lem2.8C} and using the assumption that $\ell=\lfloor\varepsilon_{\ref{def1.1}}m\rfloor$,
we deduce that
\begin{align*}
\Phi(G_\ell)\geq\Phi(G_1)\!+\!\frac{\ell\!-\!1}{5m}
\geq \Phi(G_1)\!+\!\frac{\varepsilon_{\ref{def1.1}}}6.
\end{align*}
Combining this with \eqref{align-08F},
we derive $\Phi(G_\ell)\geq\sqrt{2(1\!-\!\frac{1}{r}\!+\!\frac{\varepsilon_{\ref{def1.1}}}{6})}$,
which is equivalent to
$$\rho(G_{\ell})\geq \sqrt{\big(\!1-\!\frac{1}{r}\!+\!\frac{\varepsilon_{\ref{def1.1}}}{6}\big)2e(G_\ell)}.$$
This leads to a contradiction with \eqref{align-07F}.
Therefore,
$\ell<\lfloor\varepsilon_{\ref{def1.1}} m\rfloor$, as claimed.

As stated in \eqref{align-07H}, we have $N_F(G_\ell)=o(|G_\ell|^f)$,
and by Lemma \ref{lem2.1C},
we can remove at most $10^{-40}\varepsilon_{\ref{def1.1}} |G_\ell|^2$ edges from \(G_{\ell}\) to obtain an \(F\)-free graph $G'$.
Furthermore, by Lemma \ref{lem2.3C},
we can remove at most $10^{-40}\varepsilon_{\ref{def1.1}} |G_\ell|^2$ additional edges from \(G'\) to obtain a \(K_{r+1}\)-free graph
$G''$.
Let $G'''$ be the subgraph induced by $E(G)\setminus E(G'')$.
Then,
\begin{align}\label{align-09F}
e(G''')\leq (\ell-1)+2\times 10^{-40}\varepsilon_{\ref{def1.1}}|G_\ell|^2<\varepsilon_{\ref{def1.1}} m+2\times 10^{-40}\varepsilon_{\ref{def1.1}}|G_\ell|^2.
\end{align}
Based on Definition \ref{def1.1} and the fact that $\ell<\lfloor\varepsilon_{\ref{def1.1}} m\rfloor$,
we know that $G_\ell$ has no light edges and isolated vertices.
Define $a:=1.1$. Since $\rho(G)\geq\sqrt{(1\!-\!\frac{1}{r}\!-\!\delta)2m}\geq\sqrt{am}$,
by Lemma \ref{lem2.8C}, we have $\rho(G_\ell)\geq \sqrt{a\cdot e(G_\ell)}$ and thus $|G_\ell|\leq ({\frac{8}{a-1}})^{10}\sqrt{e(G_\ell)}\leq 10^{20}\sqrt{m}$.
Combining (\ref{align-09F}) yields $$d(G,G'')=e(G''')\leq 3\varepsilon_{\ref{def1.1}}m.$$
Hence, $\rho(G''')\leq\sqrt{2e(G''')}\leq\sqrt{6\varepsilon_{\ref{def1.1}}m}$.
A straightforward calculation shows that
$$\rho(G'')\geq \rho(G)\!-\!\rho(G''')
\geq \sqrt{\big(1\!-\!\frac{1}{r}\!-\!\delta\big)2m}\!-\!\sqrt{6\varepsilon_{\ref{def1.1}} m}
\geq \sqrt{\big(1\!-\!\frac{1}{r}\!-\!6\sqrt{\varepsilon_{\ref{def1.1}}}\big)2m}$$
for sufficiently large $m$.
Therefore, we can further derive that
$\rho(G'')\geq \sqrt{(1\!-\!\frac{1}{r}\!-\!6\sqrt{\varepsilon_{\ref{def1.1}}})2e(G'')}.$

In view of \eqref{align-06F}, we can choose $\delta_{\ref{lem2.5C}}\geq 6\sqrt{\varepsilon_{\ref{def1.1}}}$.
Then, $\rho(G'')\geq\sqrt{(1\!-\!\frac{1}{r}\!-\!\delta_{\ref{lem2.5C}})2e(G'')}.$
By Lemma \ref{lem2.5C},
there exists an induced subgraph $G^{(4)}\subseteq G''$ such that
\begin{align}\label{align-010F}
\big|G^{(4)}\big|\le (1\!+\!\varepsilon_{\ref{lem2.5C}})\sqrt{2e(G'')\big/\big(1\!-\!\frac1r\big)}\leq \sqrt{(1\!+\!3\varepsilon_{\ref{lem2.5C}})2m\big/\big(1\!-\!\frac1r\big)},
\end{align}
and $d(G''\!,\!G^{(4)})\leq\varepsilon_{\ref{lem2.5C}}e(G'')\leq\varepsilon_{\ref{lem2.5C}}m$.
Let $G^{(5)}$ be the subgraph induced by $E(G)\setminus E(G^{(4)})$.
It is easy to see that
\begin{align}\label{align-011F}
e\big(G^{(5)}\big)=d\big(G,\!G^{(4)}\big)\leq d\big(G,\!G''\big)\!+\!d\big(G''\!,\!G^{(4)}\big)\leq\big(3\varepsilon_{\ref{def1.1}}\!+\!\varepsilon_{\ref{lem2.5C}}\big)m.
\end{align}
Recall that $\varepsilon_{\ref{def1.1}}\ll \delta_{\ref{lem2.5C}}\lll \varepsilon_{\ref{lem2.5C}}$.
Then, from (\ref{align-011F}), we have $e(G^{(5)})\leq 2\varepsilon_{\ref{lem2.5C}}m$, and hence
$\rho(G^{(5)})\leq\sqrt{2e(G^{(5)})}\leq\sqrt{4\varepsilon_{\ref{lem2.5C}}m}$.
Combining this with $\rho(G)\geq  \sqrt{(1-\frac{1}{r}-\delta)2m}$, we obtain
\begin{align*}
\rho(G^{(4)})
&\geq \rho(G)\!-\!\rho(G^{(5)})
\geq \sqrt{\big(1\!-\!\frac{1}{r}-\delta\big)2m}\!-\!\sqrt{4\varepsilon_{\ref{lem2.5C}} m}\\
&\geq \sqrt{\big(1\!-\!\frac{1}{r}\!-\!6\sqrt{\varepsilon_{\ref{lem2.5C}}}\big)2m}
\geq \big(1\!-\!\frac{1}{r}\!-\!9\varepsilon_{\ref{lem2.5C}}^{\frac14}\big)\big|G^{(4)}\big|,
\end{align*}
where the final inequality follows from \eqref{align-010F} and a straightforward calculation.

Recall that $\varepsilon_{\ref{lem2.5C}}
\ll \delta_{\ref{lem2.4C}}.$ Then, we have
$9\varepsilon_{\ref{lem2.5C}}^{1/4}<\delta_{\ref{lem2.4C}}.$
Assume that $|G^{(4)}|=n$.
Then, by the above inequality, we know that $\rho(G^{(4)})>(1\!-\!\frac{1}{r}\!-\!\delta_{\ref{lem2.4C}})n$.
Since $G''$ (and thus $G^{(4)}$) is $K_{r+1}$-free,
by Lemma \ref{lem2.4C},
there exists a Tur\'{a}n graph $T_{n,r}$ such that $V(T_{n,r})=V(G)$ and $d(G^{(4)}\!,T_{n,r})\leq \varepsilon_{\ref{lem2.4C}}n^2$.
Recall that $G^{(4)}\subseteq G_\ell$ and that $|G_\ell|\leq 10^{20}\sqrt{m}$.
It follows that $n=|G^{(4)}|\leq 10^{20}\sqrt{m}$.
Combining this with (\ref{align-011F}) gives
$$d\big(G,T_{n,r}\big)\leq d\big(G,\!G^{(4)}\big)+d\big(G^{(4)}\!,T_{n,r}\big)
\leq (3\varepsilon_{\ref{def1.1}}+\varepsilon_{\ref{lem2.5C}})m+\varepsilon_{\ref{lem2.4C}} n^2\leq\varepsilon m,$$
where the last inequality follows from \eqref{align-06F}.
This completes the proof of part (i).

\vspace{1mm}
(ii) Assuming now that $r\leq2$ and $\rho(G)\geq\sqrt{(1\!-\!\delta)m}$, we will divide the proof into two cases.

\vspace{1mm}
\textbf{Case 1. $\chi(F)=r+1=3$.}
Recall that $0<\varepsilon\ll 1$.
It suffices to prove that $d(G,K_{U,V})\leq \varepsilon m$.
For simplicity,
we establish the following hierarchy:
\begin{align}\label{align-012F}
0<\delta\ll \delta_{\ref{lem2.7C}}\lll \varepsilon_{\ref{lem2.7C}}
\ll \delta_{\ref{lem2.6C}}\lll \varepsilon_{\ref{lem2.6C}}\ll\varepsilon_{\ref{def1.1}}\ll\varepsilon_{\ref{def1.1}}
\ll \varepsilon\ll 1,
\end{align}
where
$\varepsilon_{\ref{def1.1}}$ is derived from Definition \ref{def1.1},
$\delta_{\ref{lem2.6C}}$ and $\varepsilon_{\ref{lem2.6C}}$
are obtained from Lemma \ref{lem2.6C},
and $\delta_{\ref{lem2.7C}}$ and $\varepsilon_{\ref{lem2.7C}}$
are taken from Lemma \ref{lem2.7C}.

Recall that $\ell\leq\lfloor\varepsilon_{\ref{def1.1}} m\rfloor$,
and that $G_1$ is obtained from $G$ by deleting possibly isolated vertices.
We now claim that
$\ell<\lfloor\varepsilon_{\ref{def1.1}} m\rfloor.$
Assume, for the sake of contradiction, that $\ell=\lfloor\varepsilon_{\ref{def1.1}} m\rfloor$.

Let $\bm{y}$ be a unit eigenvector corresponding to $\rho(G_i)$, where $i\in\{1,\ldots,\ell-1\}$.
Then $E(G_{i+1})=E(G_i)\setminus\{u_iv_i\}$, where $u_iv_i$ is a light edge of $G_i$.
Hence, $y_{u_i}y_{v_i}\leq\frac{1}{8\sqrt{e(G_i)}}$, and thus
\begin{align*}
\rho(G_{i})&=\bm{y}^\top A(G_{i})\bm{y}
\leq\bm{y}^\top A(G_{i+1})\bm{y}+\frac{1}{4\sqrt{e(G_i)}}.
\end{align*}

Note that $\rho(G_1)=\rho(G)\geq\sqrt{(1-\delta)m}$ and $\delta\ll \varepsilon_{\ref{def1.1}}$.
Then, $\rho(G_1)\geq (1-\frac{\varepsilon_{\ref{def1.1}}}{60})\sqrt{m}$.
By part (ii) of Lemma \ref{lem2.8C} and the assumption that $\ell=\lfloor\varepsilon_{\ref{def1.1}}m\rfloor$,
we derive that
\begin{align*}
\Phi(G_{\ell})\geq \Phi(G_{1})\!+\!\frac{\ell\!-\!1}{5m}
\geq 1\!+\!\frac{\varepsilon_{\ref{def1.1}}}6.
\end{align*}
It follows that
$\rho(G_{\ell})\geq\sqrt{\big(\!1\!+\!\frac{\varepsilon_{\ref{def1.1}}}{6}\big)e(G_\ell)}.$
However, by \eqref{align-07F}, we have $\rho(G_\ell)\leq\sqrt{\big(1\!+\!o(1)\big)e(G_\ell)},$
which leads to a contradiction.
Therefore, we conclude that $\ell<\lfloor\varepsilon_{\ref{def1.1}} m\rfloor$, as claimed.

Now, let $H=G_{\ell}$, $h=e(H)$, and let $\bm{x}=(x_v)_{v\in V(H)}$ be a non-negative unit eigenvector corresponding to $\rho(H)$.
Define $a:=1-\delta$. Since $\rho(G)\geq\sqrt{(1-\delta)m}=\sqrt{am}$,
by part (i) of Lemma \ref{lem2.8C}, we know that $\rho(H)\geq\sqrt{ah}=\sqrt{(1-\delta)h}$.
We will now divide the remainder of the proof of Case 1 into two subcases.

{\textbf{Subcase 1.1.} $\max\{x_u : u\in V(H)\} > \delta_{\ref{lem2.7C}}^{-1} h^{-1/4}$.}

By Lemma \ref{lem2.7C},
there exist disjoint vertex subsets $U, V$ of $H$ such that $d(H, K_{U,V}) \leq\varepsilon_{\ref{lem2.7C}} h$.
On the other hand,
we have $d(G,H)=\ell-1<\varepsilon_{\ref{def1.1}} m$.
Combining these results with the hierarchy established in \eqref{align-012F}, we obtain
\begin{align*}
d(G,K_{U,V})\leq d(G,H)+d(H, K_{U,V})\leq \varepsilon_{\ref{def1.1}} m+\varepsilon_{\ref{lem2.7C}} h\leq \varepsilon m.
\end{align*}

{\textbf{Subcase 1.2.} $\max\{x_u : u\in V(H)\}\leq \delta_{\ref{lem2.7C}}^{-1} h^{-1/4}$.}

By Definition \ref{def1.1} and the fact that $\ell<\lfloor\varepsilon_{\ref{def1.1}} m\rfloor$,
we know that $H$ contains no light edges, i.e.,
$x_{u}x_{v}>\frac18h^{-1/2}$ for each $uv\in E(H)$.
Together with the fact that $\max\{x_u : u \in V(H)\}\leq \delta_{\ref{lem2.7C}}^{-1} h^{-1/4}$ and $\delta(H)\geq 1$,
this implies that $\min_{v\in V(G)}x_v>\frac{1}{8}\delta_{\ref{lem2.7C}} h^{-1/4}.$
Combining this result with the normalization condition
$\sum_{u\in V(H)}x_u^2=1$, we deduce that $|H|\leq 64\delta_{\ref{lem2.7C}}^{-2} \sqrt{h}$.

As stated in \eqref{align-07H}, we have $N_F(H)=o(h^f)$,
and by Lemma \ref{lem2.1C},
we can remove at most $\delta_{\ref{lem2.7C}}^{7} |H|^2$ edges from \(H\) to obtain an \(F\)-free graph $H'$.
Furthermore, by Lemma \ref{lem2.3C},
we can remove at most $\delta_{\ref{lem2.7C}}^{7} |H|^2$ additional edges from \(H'\) to obtain a triangle-free graph
$H''$.

Let $H'''$ be the subgraph induced by $E(H)\setminus E(H'')$.
Then, we can observe that
$$d(H,H'')=e(H''')\leq 2\delta_{\ref{lem2.7C}}^{7} |H|^2\leq \frac{1}{8}\delta_{\ref{lem2.7C}}^2 h.$$
It follows that $\rho(H''')\leq\sqrt{2e(H''')}\leq \frac{1}{2}\delta_{\ref{lem2.7C}}\sqrt{ h}$.
A straightforward calculation shows that
$$\rho(H'')\geq \rho(H)\!-\!\rho(H''')
\geq \sqrt{(1-\delta)h}\!-\frac{1}{2}\delta_{\ref{lem2.7C}}\sqrt{ h}
\geq \sqrt{\big(1-2\delta_{\ref{lem2.7C}}\big)h}
> \sqrt{\big(1-\delta_{\ref{lem2.6C}}\big)e(H'')},$$
where the last inequality follows from \eqref{align-012F} and the fact that $h\geq e(H'')$.
By Lemma \ref{lem2.6C},
there exist two disjoint subsets $U,V\subseteq V(H'')$ such that
$d(H'', K_{U,V}) \leq \varepsilon_{\ref{lem2.6C}} m$.
Therefore,
$$d(G,K_{U,V})\leq d(G,H)\!+\!d(H,H'')\!+\!d(H'',K_{U,V})
\leq \varepsilon_{\ref{def1.1}} m
\!+\!\frac{1}{8}\delta_{\ref{lem2.7C}}^2 h\!+\!\varepsilon_{\ref{lem2.6C}} m
\leq \varepsilon m,$$
as required.

\vspace{1mm}
\textbf{Case 2. $\chi(F)=r+1=2$ and $\beta'(F)\geq 2$.}
In this case, $F$ is a bipartite graph of order $f$ with an independent cover of size $\beta'(F)$.
For convenience, we define $a=\beta'(F)$ and $b=f-a$.
Let $F_0$ be the graph obtained from $K_{a,b}$ by
adding an edge within the color class of size $a$.
Clearly, $F$ is a spanning subgraph of $F_0$.
For any copy $F'$ of $F$ in $G$, there are at most $N_{F_0}(K_f)$ copies of $F_0$ containing $F'$,
which implies that $N_{F_0}(G)\leq N_{F_0}(K_f) N_F(G)$.
Recall that $N_F(G)=o(m^{f/2})$.
Thus, we have $N_{F_0}(G)=o(m^{f/2})$.
Moreover, noting that $\chi(F_0)=3$ and $\rho(G)\geq \sqrt{(1-\delta)m}$,
we can apply the result from our earlier discussion in Case 1 to the graph $F_0$.
Consequently,
there exist disjoint vertex subsets $U,V\subseteq V(G)$
such that $d(G,K_{U,V})\leq \varepsilon m$, as required.

This completes the proof of Theorem \ref{thm1.6C}.
\end{proof}

\begin{proof}[\textbf{Proof of Theorem \ref{thm1.5C}}]
Let $\varepsilon$ be a sufficiently small positive number, i.e., $0<\varepsilon\ll 1$.
For convenience, we establish the following hierarchy:
\begin{align}\label{align-013F}
0<\delta\ll \delta_{\ref{thm1.6C}} \lll \varepsilon_{\ref{thm1.6C}}\ll \varepsilon\ll 1,
\end{align}
where $\delta_{\ref{thm1.6C}}$ and $\varepsilon_{\ref{thm1.6C}}$
are obtained from Theorem \ref{thm1.6C}.
Let $n$ be sufficiently large and $G$ be a graph with $n$ vertices and $m$ edges such that $N_F(G)=o(n^{f})$ and
$m\geq (1-\frac{1}{r}-\delta)\frac{n^2}{2}$.
Our goal is to prove that there exists
a Tur\'{a}n graph $T_{n,r}$ such that $V(T_{n,r})=V(G)$ and  $d(G,T_{n,r})\leq \varepsilon n^2.$

By Lemma \ref{lem2.1C},
removing at most $\frac12\delta n^2$ edges from \(G\) results in an \(F\)-free graph
$G'$ with $m'$ edges. Since $G'$ is \(F\)-free,
the classical stability theorem implies $m'\leq (1-\frac{1}{r}+\delta)\frac{n^2}{2}$.
Thus,
\begin{align}\label{align-014F}
(1-\frac{1}{r}-\delta)\frac{n^2}{2}\leq m\leq m'+\frac12\delta n^2\leq (1-\frac{1}{r}+2\delta)\frac{n^2}{2}\leq \frac12n^2.
\end{align}
Hence, we have $N_F(G)=o(n^{f})=o(m^{f/2})$.
In light of \eqref{align-014F}, we also derive that $(1-\frac{1}{r}+2\delta)\frac{n^2}{2}\geq m$
and $\rho(G)\geq \rho(G')\geq \frac{2m'}{n}\geq(1-\frac{1}{r}-2\delta)n$.
Therefore, we deduce that
\begin{align}\label{align-014G}
\rho(G)\geq  (1\!-\!\frac{1}{r}\!-\!2\delta)n
\geq \big(1\!-\!\frac{1}{r}\!-\!2\delta\big)\sqrt{\frac{2m}{1\!-\!\frac{1}{r}\!+\!2\delta}}
\geq \sqrt{\big(1\!-\!\frac{1}{r}\!-\!\frac12\delta_{\ref{thm1.6C}}\big)2m},
\end{align}
where the last inequality follows from $\delta\ll\delta_{\ref{thm1.6C}},$ as assumed in \eqref{align-013F}.

We first consider the case $r\geq 3$. Given \eqref{align-014G} and $N_F(G)=o(m^{f/2})$,
it follows from part (i) of Theorem \ref{thm1.6C} that there exists a Tur\'{a}n graph $T_{n',r}$
satisfying $V(T_{n',r})\subseteq V(G)$ and $d(G,T_{n',r})\leq \varepsilon_{\ref{thm1.6C}} m$.
Hence,
$e(T_{n',r})\geq (1-\varepsilon_{\ref{thm1.6C}})m.$
It is also known that $e(T_{n',r})\leq \frac{r-1}{2r}{n'}^2$.
Combining the left-hand side of \eqref{align-014F}, we deduce that
\begin{align*}
\frac{r-1}{2r}n'^2\geq (1-\varepsilon_{\ref{thm1.6C}})m\geq (1-\varepsilon_{\ref{thm1.6C}})(1-\frac{1}{r}-\delta)\frac{n^2}{2}.
\end{align*}
Together with the condition $\delta\ll\varepsilon_{\ref{thm1.6C}}$,
this inequality implies that $n'\geq (1-\varepsilon_{\ref{thm1.6C}})n$.

Recall that $|G|=n$ and $V(T_{n',r})\subseteq V(G)$.
We may construct a graph $T^*$ isomorphic to $T_{n,r}$ such that
$V(T^*)=V(G)$ and $T_{n',r}\subseteq T^*$.
Clearly, $d(T^*,T_{n',r})\leq (n-n')n\leq\varepsilon_{\ref{thm1.6C}} n^2$ since $n'\geq (1-\varepsilon_{\ref{thm1.6C}})n$.
Moreover, we recall that $d(G,T_{n',r})\leq \varepsilon_{\ref{thm1.6C}}m$, and $m\leq \frac12 n^2$ by \eqref{align-014F}.
Thus,
   $$d(G,T^*)\leq d(G,T_{n',r})+d(T_{n',r},T^*)
   \leq \frac32 \varepsilon_{\ref{thm1.6C}} n^2
   <\varepsilon n^2.$$
Therefore, $T^*$ is the desired Tur\'{a}n graph for $r\geq3$.

Next, we consider the case $r=2$.
Given \eqref{align-014G} and $N_F(G)=o(m^{f/2})$,
it follows from part (ii) of Theorem \ref{thm1.6C} that
there exist disjoint subsets $U,V\subseteq V(G)$
such that $d(G,K_{U,V})\leq \varepsilon_{\ref{thm1.6C}} m$.
Thus,
$|U|\cdot |V|\geq(1-\varepsilon_{\ref{thm1.6C}})m.$
Combining the left-hand side of \eqref{align-014F}, we deduce that
\begin{align}\label{align-015F}
|U|\cdot |V|\geq\big(1-\varepsilon_{\ref{thm1.6C}}\big)\big(\frac{1}{2}-\delta\big)\frac{n^2}{2}
\geq\big(\frac14-\varepsilon_{\ref{thm1.6C}}\big)n^2.
\end{align}
Since $U$ and $V$ are disjoint subsets of $V(G)$,
we have $|U|+|V|\leq n$.
Combining this with \eqref{align-015F}, we derive
$(\frac12\!-\!\sqrt{\varepsilon_{\ref{thm1.6C}}})n\leq |U|,|V|\leq (\frac12\!+\!\sqrt{\varepsilon_{\ref{thm1.6C}}})n$.
Assume without loss of generality that $|U|\leq |V|$,
and let $U'\cup V'$ be a partition of $V(G)$ with $|U'|=\lfloor\frac{n}{2}\rfloor$, such that $U\subseteq U'$ and

\vspace{2mm}
$\bullet$ if $|V|\geq\lceil \frac{n}{2}\rceil$, then $V'\subseteq V$;

\vspace{2mm}
$\bullet$ if $|V|<\lceil \frac{n}{2}\rceil$, then $V\subseteq V'$.

\vspace{2mm}
\noindent
Clearly, $\big||V'|-|V|\big|\leq 2\sqrt{\varepsilon_{\ref{thm1.6C}}} n$ and $\big||U'|-|U|\big|\leq 2\sqrt{\varepsilon_{\ref{thm1.6C}}} n$.
It follows that
 $$d(K_{U',V'},K_{U,V})\leq d(K_{U',V'},K_{U',V})+d(K_{U',V},K_{U,V})
\leq 4\sqrt{\varepsilon_{\ref{thm1.6C}}}n^2.$$
Recalling that $d(G,K_{U,V})\leq \varepsilon_{\ref{thm1.6C}} m\leq \frac12 \varepsilon_{\ref{thm1.6C}}n^2\leq\sqrt{\varepsilon_{\ref{thm1.6C}}} n^2$,
we deduce that
$$d(G,K_{U',V'})\leq d(G,K_{U,V})+d(K_{U,V},K_{U',V'})\leq5\sqrt{\varepsilon_{\ref{thm1.6C}}}n^2<\varepsilon n^2.$$
Therefore, $K_{U',V'}$ is the desired Tur\'{a}n graph for $r=2$, thus completing the proof.
\end{proof}

\begin{proof}[\textbf{Proof of Theorem \ref{thm1.4C}}]
Let $\varepsilon$ be a sufficiently small positive number, i.e., $0<\varepsilon\ll 1$.
For convenience, we establish the following hierarchy:
\begin{align}\label{align-016F}
0< \delta \ll \delta_{\ref{lem2.1C}}\ll\delta_{\ref{thm1.5C}} \ll \varepsilon_{\ref{thm1.5C}}
\ll\varepsilon\ll 1,
\end{align}
where $\delta_{\ref{lem2.1C}}$ comes from Lemma \ref{lem2.1C},
and $\delta_{\ref{thm1.5C}}$ and $\varepsilon_{\ref{thm1.5C}}$
are taken from Theorem \ref{thm1.5C}.

Let $n$ be sufficiently large, and let $G$ be a graph with $n$ vertices and $m$ edges such that $N_F(G)=o(n^{f})$ and
$\rho(G)\geq (1-\frac{1}{r}-\delta)n$.
Our goal is to prove that there exists an $r$-partite Tur\'{a}n graph $T_{n,r}$ such that $V(T_{n,r})=V(G)$ and  $d(G,T_{n,r})\leq \varepsilon n^2.$
To achieve this, by Theorem \ref{thm1.5C},
it suffices to show that
\begin{align}\label{align-017F}
m\geq (1-\frac{1}{r}-\delta_{\ref{thm1.5C}})\frac{n^2}{2}.
\end{align}

By Lemma \ref{lem2.1C},
removing at most $\delta_{\ref{lem2.1C}} n^2$ edges from \(G\) results in an \(F\)-free graph
$G'$ with $m'$ edges.
Let $H$ be the subgraph of $G$ induced by those removed edges. 
Then, $\rho(H)\leq \sqrt{2e(H)}\leq \sqrt{2\delta_{\ref{lem2.1C}}}n$.
Combining with \eqref{align-016F} and the assumption that $\rho(G)\geq (1-\frac{1}{r}-\delta)n$,
we obtain
$$\rho(G')\geq \rho(G)\!-\!\rho(H)\geq \big(1\!-\!\frac{1}{r}\!-\!\frac12\delta_{\ref{thm1.5C}}\big)n.$$
On the other hand, Theorem \ref{thm1.3C} implies that
$\rho(G')\leq \rho(G)\leq \sqrt{(1\!-\!\frac{1}{r}\!+\!\frac14\delta_{\ref{thm1.5C}}^2)2m}$.
Combining these two inequalities, we immediately obtain \eqref{align-017F}.
This completes the proof.
\end{proof}

\section{Proof of Theorem \ref{thm1.8C}}

In this section, we present the proof of Theorem \ref{thm1.8C}.
Prior to proceeding with the proof, we first introduce a key definition that underpins the subsequent arguments.

\begin{definition}\label{def5.1}
Let $\varepsilon\in(0,\frac15)$ be a constant, and define $\Phi(G):=\rho(G)\big/\sqrt{e(G)}$.
A subgraph $G'$ is called an {\bf \emph{$\varepsilon$-dense subgraph}} of $G$,
if $G'$ is induced by an edge subset of $G$ such that
$$1\leq e(G)-e(G')\leq \sqrt{2e(G)\big/\big(1\!-\!\frac1r\big)}~~
\text{and}~~\Phi(G')\!-\!\Phi(G)\geq\frac{\varepsilon\big(e(G)\!-\!e(G')\big)}{e(G)}.$$
\end{definition}

Specially, let \( G \) be a graph as defined in Lemma \ref{lem2.8C}.
If \( G \) admits a light edge \( uv \), we can denote \( G' \) as the subgraph induced by \( E(G) \setminus \{uv\} \).
Then, by (ii) of Lemma \ref{lem2.8C}, \( G' \) is clearly an \(\varepsilon\)-dense subgraph of \( G \).

Let $m$ be sufficiently large and $F$ be a color-critical graph of order $f$ with  $\chi(F)=r+1\geq 4$.
Theorem \ref{thm1.8C} asserts that
if $G$ is an $m$-edge graph with $\delta(G)\geq1$ and
$\rho^2(G)\geq (1-\frac{1}{r})2m$, then
\begin{equation}\label{align-018H}
N_F(G)
\geq \Big(\big(\frac{2r}{r-1}\big)^{\frac{f-2}{2}}\alpha_F\!-\!o(1)\Big)m^{\frac{f-2}{2}},
\end{equation}
unless $G$ is a regular complete $r$-partite graph.
Furthermore, this bound is asymptotically tight.

\begin{proof}[\textbf{Proof of Theorem \ref{thm1.8C}}]
First, we establish the asymptotic tightness of the bound in \eqref{align-018H} by constructing a graph $G^{\star}$ with $m$ edges
satisfying
$$\rho^2(G^\star)\geq \big(1-\frac{1}{r}\big)2m~~\text{and}~~
N_F(G^\star)=\Big(\big(\frac{2r}{r-1}\big)^{\frac{f-2}{2}}\alpha_F\!-\!o(1)\Big)m^{\frac{f-2}{2}},$$
where $m=e(T_{n^\star,r})+1$
and $n^\star$ is a sufficiently large integer divisible by $r$.
We note that $T_{n^\star,r}$ is $(1\!-\!\frac1r)n^\star$-regular with $m-1$ edges.
It follows that $n^\star=\sqrt{2(m\!-\!1)/(1\!-\!\frac{1}{r})}$ and
\begin{equation}\label{align-018F}
\rho(T_{n^\star,r})=(1\!-\!\frac1r)n^\star
=\sqrt{(1\!-\!\frac{1}{r})2(m\!-\!1)}
=\sqrt{(1\!-\!\frac{1}{r})2m}\!-\!\frac{1}{n^\star}\!+\!o\big(\frac{1}{n^\star}\big).
\end{equation}
Let $G^\star$ be a graph obtained from $T_{n^\star,r}$ by adding a single edge within one of its color classes.
Clearly, $e(G^\star)=m$. Combining \eqref{align-018F} with part (i) of Lemma \ref{first-key}, we derive
  $$\rho(G^\star)=\rho(T_{n^\star,r})+\frac{2}{n^\star}+O\big(\frac{1}{{n^\star}^2}\big)>\sqrt{\big(1-\frac{1}{r}\big)2m}.$$
From the definition of $c(n^\star,F)$, we know that $c(n^\star,F)=N_F(G^\star)$. By Lemma \ref{lem2.2C}, we also have
$$c(n^\star,F)=\big(\alpha_F\!+\!o(1)\big)(n^\star)^{f-2}
=\Big(\big(\frac{2r}{r-1}\big)^{\frac{f-2}{2}}\alpha_F\!-\!o(1)\Big)m^{\frac{f-2}{2}}.$$
Thus, $G^\star$ is the desired graph.

For a regular complete $r$-partite graph $G$,
it is known that $\rho^2(G)=\big(1\!-\!\frac{1}{r}\big)2m$.
Additionally, we have $N_F(G)=0$ since $\chi(F)=r+1$.
Hence, for the remainder of the proof,
we assume that the graphs under consideration are not regular complete $r$-partite graphs,
and we will show that the bound in \eqref{align-018H} holds for any graph $G$ of size $m$ with $\delta(G)\geq1$ and
$\rho^2(G)\geq(1\!-\!\frac{1}{r})2m$.

By way of contradiction,
suppose that for some sufficiently large $m$,
there exists an $m$-edge graph $G$ such that $\rho^2(G)\geq (1-\frac{1}{r})2m$
and there exists a constant $\varepsilon$ ($0<\varepsilon\ll 1$) satisfying
\begin{align}\label{align-019F}
N_F(G)<\alpha_F\bigl(1-2\varepsilon\bigr)\Big(2m\big/\big(1\!-\!\frac1r\big)\Big)^{\frac{f-2}{2}}.
\end{align}
For convenience,
we fix the following hierarchy:
\begin{align}\label{align-020F}
0<\varepsilon_{0}\ll \varepsilon_{1} \ll \varepsilon_{2} \ll \varepsilon \ll 1.
\end{align}


We proceed by constructing a sequence of graphs $G_{(1)}\supset G_{(2)}\supset\cdots\supset G_{(\ell)}$,
where $G_{(1)}=G$ and $G_{(i+1)}$ is an $\varepsilon_{0}$-dense subgraph of $G_{(i)}$ for $1\leq i\leq \ell-1$.
The process stops as soon as either \(e(G_{(1)})-e(G_{(\ell)})\geq\lfloor\varepsilon_{0} m\rfloor\)
or \(G_{(\ell)}\) contains no \(\varepsilon_{0}\)-dense subgraphs.

\begin{claim}\label{claim5.1C}
We have $e(G_{(1)})-e(G_{(\ell)})<\lfloor\varepsilon_{0} m\rfloor$.
Moreover, $G_{(\ell)}$ contains no light edges.
\end{claim}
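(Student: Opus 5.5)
The plan is to prove the two assertions separately. The first one I will get by contradiction with Theorem \ref{thm1.3C}. The second one follows from the first together with the one-step computation behind part (ii) of Lemma \ref{lem2.8C}.

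\textbf{Step 1: the total number of removed edges is small.} Suppose, for contradiction, that $e(G_{(1)})-e(G_{(\ell)})\ge\lfloor\varepsilon_0 m\rfloor$. Each step removes at most $\sqrt{2e(G_{(i)})/(1-\frac1r)}=O(\sqrt m)$ edges, and the process stops as soon as the threshold is reached. Hence $\varepsilon_0 m-1\le e(G_{(1)})-e(G_{(\ell)})\le \varepsilon_0 m+O(\sqrt m)$, and so $e(G_{(\ell)})=\Theta(m)$. Telescoping the defining inequality of $\varepsilon_0$-dense subgraphs, and using $e(G_{(i)})\le m$, gives
\begin{align*}
\Phi(G_{(\ell)})-\Phi(G)\ \ge\ \sum_{i=1}^{\ell-1}\frac{\varepsilon_0\big(e(G_{(i)})-e(G_{(i+1)})\big)}{e(G_{(i)})}\ \ge\ \frac{\varepsilon_0\big(e(G_{(1)})-e(G_{(\ell)})\big)}{m}\ \ge\ \frac{\varepsilon_0^2}{2}.
\end{align*}
Since $\Phi(G)\ge\sqrt{2(1-\frac1r)}$, this yields $\rho(G_{(\ell)})\ge\big(\sqrt{2(1-\frac1r)}+\varepsilon_0^2/2\big)\sqrt{e(G_{(\ell)})}$.

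On the other hand, \eqref{align-019F} gives $N_F(G_{(\ell)})\le N_F(G)=O(m^{(f-2)/2})$. Because $e(G_{(\ell)})=\Theta(m)$, this is $o\big(e(G_{(\ell)})^{f/2}\big)$. Theorem \ref{thm1.3C} with $r\ge3$ then gives $\rho(G_{(\ell)})\le\sqrt{(1-\frac1r+o(1))2e(G_{(\ell)})}$. This contradicts the previous bound for $m$ sufficiently large, since $\varepsilon_0$ is a fixed constant.

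\textbf{Step 2: no light edges remain.} By Step 1 the process did not stop because of the edge-count condition. Therefore it stopped because $G_{(\ell)}$ has no $\varepsilon_0$-dense subgraph. It suffices to show that any light edge $uv$ of $G_{(\ell)}$ would produce one, namely $G'$, the subgraph induced by $E(G_{(\ell)})\setminus\{uv\}$.

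Write $e=e(G_{(\ell)})$. The condition $1\le e-e(G')=1\le\sqrt{2e/(1-\frac1r)}$ is trivial. For the density gain I repeat the computation in the proof of Lemma \ref{lem2.8C}(ii). By the Rayleigh quotient with the Perron vector of $G_{(\ell)}$, we have $\rho(G')\ge\rho(G_{(\ell)})-\frac{1}{4\sqrt e}$. Hence
\begin{align*}
\rho(G')\sqrt{e}-\rho(G_{(\ell)})\sqrt{e-1}\ \ge\ \frac{\rho(G_{(\ell)})}{2\sqrt e}-\frac14\ \ge\ \frac15,
\end{align*}
using $\rho(G_{(\ell)})\ge\sqrt{2(1-\frac1r)}\sqrt e\ge 0.9\sqrt e$, which holds because $\Phi$ only increased along the sequence. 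Dividing by $\sqrt{e(e-1)}$ gives $\Phi(G')-\Phi(G_{(\ell)})\ge\frac{1}{5\sqrt{e(e-1)}}\ge\frac{1}{5e}>\frac{\varepsilon_0}{e}$, since $\varepsilon_0<\frac15$. Note that one must keep the denominator $e$ here, rather than the cruder $m$ used in the paper's Lemma \ref{lem2.8C}, to match Definition \ref{def5.1}. Thus $G'$ is an $\varepsilon_0$-dense subgraph of $G_{(\ell)}$, which is a contradiction.

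The main obstacle is Step 1. One has to make sure that Theorem \ref{thm1.3C}, an asymptotic statement, applies uniformly: the $o(1)$ must beat the fixed gain $\varepsilon_0^2/2$, which requires $e(G_{(\ell)})=\Theta(m)$ and $N_F(G_{(\ell)})=o(e(G_{(\ell)})^{f/2})$. Both follow from the bounded step sizes and from \eqref{align-019F}.
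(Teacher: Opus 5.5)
Your proposal is correct and follows essentially the same route as the paper: a telescoping contradiction with Theorem \ref{thm1.3C} for the edge count, then the one-step light-edge computation of Lemma \ref{lem2.8C}(ii) to produce an $\varepsilon_0$-dense subgraph. Your justification of $\rho(G_{(\ell)})\ge 0.9\sqrt{e(G_{(\ell)})}$ uses the monotonicity of $\Phi$ along the sequence, which is slightly cleaner than the paper's appeal to \eqref{align-021F}, since that inequality was derived under the hypothesis refuted in the first part.
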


\begin{proof}
Assume, for contradiction, that \(e(G_{(1)})-e(G_{(\ell)})\geq\lfloor\varepsilon_{0} m\rfloor\).
For every $i\in[\ell-1]$, since $G_{(i+1)}$ is an $\varepsilon_{0}$-dense subgraph of $G_{(i)}$, we have
\[
\Phi(G_{(i+1)})\!-\!\Phi(G_{(i)})
\geq\frac{\varepsilon_{0}\big(e(G_{(i)})\!-\!e(G_{(i+1)})\big)}{e(G_{(i)})}\geq\frac{\varepsilon_{0}}{m}\big(e(G_{(i)})\!-\!e(G_{(i+1)})\big).
\]
Combining this with the assumption that $e(G_{(1)})-e(G_{(\ell)})\geq\lfloor\varepsilon_{0} m\rfloor$,
we can deduce that
\begin{align*}
\Phi(G_{(\ell)})
\!-\!\Phi(G_{(1)})
\geq\frac{\varepsilon_{0}}m\sum_{i=1}^{\ell-1}\big(e(G_{(i)})\!-\!e(G_{(i+1)})\big)
=\frac{\varepsilon_{0}}{m}\big(e(G_{(1)})\!-\!e(G_{(\ell)})\big)
>\frac{1}{2}{\varepsilon_{0}}^2.
\end{align*}
Since $\rho(G_{(1)})=\rho(G)\geq\sqrt{(1-\frac{1}{r})2m}$, we have
$\Phi(G_{(1)})\geq\sqrt{2(1\!-\!\frac{1}{r})}$.
It follows that
\begin{align}\label{align-021F}
\Phi(G_{(\ell)})
\geq\sqrt{2\big(1\!-\!\frac{1}{r}\big)}\!+\!\frac{1}{2}{\varepsilon_{0}}^2
\geq\sqrt{2\big(1\!-\!\frac{1}{r}\!+\!\frac{1}{2}{\varepsilon_{0}}^2\big)}.
\end{align}

On the other hand, since $G_{(\ell-1)}$ is not the end of the graph sequence, we know that
$e(G_{(1)})-e(G_{(\ell-1)})<\lfloor\varepsilon_{0} m\rfloor$.
Additionally, since $G_{(\ell)}$ is an $\varepsilon_0$-dense subgraph of $G_{(\ell-1)}$,
we have
$$e(G_{(\ell-1)})\!-\!e(G_{(\ell)})\leq \sqrt{2 e(G_{(\ell-1)})\big/\big(1\!-\!\frac1r\big)\,}
< \lfloor\varepsilon_{0} m\rfloor.$$
Hence, $e(G_{(1)})-e(G_{(\ell)})<2\lfloor\varepsilon_{0} m\rfloor$, and thus $e(G_{(\ell)})>\frac m2$.
This, together with \eqref{align-019F},
gives that $$N_F\big(G_{(\ell)}\big)\leq N_F(G)=o\big(m^{\frac f2}\big)=o\big(e(G_{(\ell)})^{\frac f2}\big).$$

By Theorem \ref{thm1.3C}, we obtain
$\rho(G_\ell)\leq\sqrt{(1\!-\!\frac{1}{r}\!+\!o(1))2e(G_\ell)},$ i.e.,
$\Phi(G_\ell)\leq \sqrt{2(1\!-\!\frac{1}{r}\!+\!o(1))}$.
This leads to a contradiction with \eqref{align-021F}.
Therefore, \(e(G_{(1)})\!-\!e(G_{(\ell)})<\lfloor\varepsilon_{0} m\rfloor\), as claimed.

We now demonstrate that $G_{(\ell)}$ contains no light edges.
Assume, for the sake of contradiction,
that $uv$ is a light edge of $G_{(\ell)}$, and let $G'$ denote the subgraph induced by $E(G_{(\ell)})\setminus \{uv\}$.
Then, we have $e(G_{(\ell)})-e(G')=1$.
Define $a:=2(1-\frac{1}{r}+\frac{1}{2}\varepsilon_{0}^2)$.
Clearly, $1<a<2$, and from \eqref{align-021F}, we have $\rho(G_{(\ell)})
\geq\sqrt{a\cdot e(G_{(\ell)})}.$
Applying part (ii) of Lemma \ref{lem2.8C} to $G_{(\ell)}$, we deduce that
$$\Phi(G')-\Phi(G_{(\ell)})
\geq \frac{1}{5e(G_{(\ell)})}>\frac{\varepsilon_{0}}{e(G_{(\ell)})}=\frac{\varepsilon_{0}\big(e(G_{(\ell)})-e(G')\big)}{e(G_{(\ell)})}.$$
Hence, $G'$ is an $\varepsilon_{0}$-dense subgraph of \(G_{(\ell)}\).
This leads to a contradiction with the definition of \(G_{(\ell)}\),
since \(e(G_{(1)})-e(G_{(\ell)})<\lfloor\varepsilon_{0} m\rfloor\).
This completes the proof of Claim \ref{claim5.1C}.
\end{proof}

In the following discussion, we will focus on the characterization of the subgraph $G_{(\ell)}$ instead of the original graph $G$.
For convenience, we denote $H:=G_{(\ell)}$ and let $h:=e(G_{(\ell)})$.

\begin{claim}\label{claim5.2C}
We have $\rho^2(H)\geq (1\!-\!\frac{1}{r})2h$ for $\ell=1$, and
$\rho^2(H)>(1\!-\!\frac{1}{r})2h$ for $\ell\geq 2$. Moreover,
there exists a Tur\'{a}n graph $T_{n',r}$ such that $V(T_{n',r})\subseteq V(H)$ and  $d(H,T_{n',r})\leq\varepsilon_{0} h$.
\end{claim}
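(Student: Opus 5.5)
The plan has two parts. First comes the spectral inequality for $H$. If $\ell=1$, then $H=G$, and $\rho^2(H)\geq(1-\frac1r)2h$ is just the hypothesis. If $\ell\geq2$, each step of the sequence is an $\varepsilon_0$-dense subgraph, so $\Phi(G_{(i+1)})-\Phi(G_{(i)})\geq \varepsilon_0(e(G_{(i)})-e(G_{(i+1)}))/e(G_{(i)})>0$, because Definition \ref{def5.1} forces $e(G_{(i)})-e(G_{(i+1)})\geq1$. Summing over the steps gives $\Phi(H)>\Phi(G)\geq\sqrt{2(1-\frac1r)}$, which is the strict inequality.

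Second comes the stability statement, which I would get from part (i) of Theorem \ref{thm1.6C}, since $r\geq3$ here. By Claim \ref{claim5.1C}, $h>m-\varepsilon_0 m\geq m/2$, so $h=\Theta(m)$. From \eqref{align-019F}, $N_F(H)\leq N_F(G)=O(m^{(f-2)/2})=o(h^{f/2})$, so $H$ satisfies the counting hypothesis of Theorem \ref{thm1.6C} (I would work with $H$ without its isolated vertices, which changes nothing). Also $\rho(H)\geq\sqrt{(1-\frac1r)2h}\geq\sqrt{(1-\frac1r-\delta)2h}$ for any $\delta>0$. Applying Theorem \ref{thm1.6C}(i) with accuracy $\varepsilon_0$ then gives a Tur\'an graph $T_{n',r}$ with $V(T_{n',r})\subseteq V(H)$ and $d(H,T_{n',r})\leq\varepsilon_0 h$.

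The main subtlety is the order of quantifiers. Theorem \ref{thm1.6C} asks for $m$ sufficiently large in terms of $\varepsilon$ and $\delta$, and for a fixed function hidden in $N_F=o(m^{f/2})$. I would fix $\varepsilon_0$ first and let $m$ (hence $h$) be large enough. The $o(\cdot)$ hypothesis is then met uniformly, since $N_F(H)/h^{f/2}=O(m^{-1})\to0$ at an explicit rate that depends only on $F$, $r$ and $\alpha_F$. Once this is checked, the claim follows directly.
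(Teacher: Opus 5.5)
Your proposal is correct and follows the paper's proof essentially step for step. The monotonicity of $\Phi$ along the $\varepsilon_0$-dense sequence gives the (strict, when $\ell\geq 2$) spectral inequality, and $h>m/2$ together with \eqref{align-019F} gives $N_F(H)=o(h^{f/2})$, after which Theorem \ref{thm1.6C}(i) is applied to $H$ with accuracy $\varepsilon_0$. Your observation that $N_F(H)\le C_F\, h^{(f-2)/2}$ makes the $o(\cdot)$ hypothesis uniform, which justifies fixing $\varepsilon_0$ first and then taking $m$ large; the paper leaves this point implicit.
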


\begin{proof}
Claim \ref{claim5.1C} gives
$m-h<\varepsilon_{0} m$,
which together with $\varepsilon_{0} \ll1$ implies that $\frac12m<h\leq m$.
Combining this with inequality \eqref{align-019F}, we deduce that
\begin{align}\label{align-022F}
N_F(H)\leq N_{F}(G)=o\big(m^{\frac{f}{2}}\big)=o\big(h^{\frac{f}{2}}\big).
\end{align}
Recall that $\Phi(G_{(1)})\geq \sqrt{2(1\!-\!\frac{1}{r})}$.
If $\ell=1$, then $H=G_{(1)}$ and $\rho^2(H)=\rho^2(G_{(1)})\geq(1\!-\!\frac{1}{r})2h.$
If $\ell\geq 2$, then by the definition of \(\varepsilon_{0}\)-dense subgraphs, we know that
\begin{align*}
\Phi(H)>\Phi(G_{(\ell-1)}) >\cdots >\Phi(G_{(1)})\geq\sqrt{2\big(1\!-\!\frac{1}{r}\big)}.
 \end{align*}
Thus, we have
$\rho^2(H)>(1\!-\!\frac{1}{r})2h.$
Note that $N_{F}(H)=o(h^{\frac f2})$.
By applying Theorem \ref{thm1.6C} to $H$,
there exists a Tur\'{a}n graph $T_{n',r}$ such that
$V(T_{n',r})\subseteq V(H)$ and $d(H,T_{n',r})\leq\varepsilon_{0} h$.
\end{proof}

\begin{claim}\label{claim5.3C}
Let $n:=|H|$ and let $n'$ be defined as in Claim \ref{claim5.2C}. Then, we have
$$\big(1\!-\!\varepsilon_{1}\big)\sqrt{2h\big/(1\!-\!\frac1r)}
\leq n'\leq n
\leq\big(1\!+\!{\varepsilon_{1}}^2\big)n'
\leq \big(1\!+\!\varepsilon_{1}\big)\sqrt{2h\big/(1\!-\!\frac1r)}.$$
\end{claim}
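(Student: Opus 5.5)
The plan is to prove the chain link by link, using three inputs: the distance bound $d(H,T_{n',r})\leq\varepsilon_0 h$ from Claim \ref{claim5.2C}, the structural information on $H$ from Claim \ref{claim5.1C}, and part (iii) of Lemma \ref{lem2.8C}. The middle inequality $n'\leq n$ is immediate, since $V(T_{n',r})\subseteq V(H)$.

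\textbf{Two-sided bound on $n'$.} Since $d(H,T_{n',r})\leq\varepsilon_0 h$, we have $(1-\varepsilon_0)h\leq e(T_{n',r})\leq(1+\varepsilon_0)h$. Combined with $e(T_{n',r})\leq(1-\frac1r)\frac{n'^2}{2}$, the left inequality gives $n'\geq\sqrt{1-\varepsilon_0}\,\sqrt{2h/(1-\frac1r)}\geq(1-\varepsilon_1)\sqrt{2h/(1-\frac1r)}$. For the other direction I would use $e(T_{n',r})\geq(1-\frac1r)\frac{n'^2}{2}-r$, which absorbs the rounding of the class sizes. Together with $e(T_{n',r})\leq(1+\varepsilon_0)h$ and $h$ large, this yields $n'\leq(1+2\varepsilon_0)\sqrt{2h/(1-\frac1r)}$. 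Once the bound $n\leq(1+\varepsilon_1^2)n'$ is established, the last inequality follows from $(1+\varepsilon_1^2)(1+2\varepsilon_0)\leq1+\varepsilon_1$, which holds because $\varepsilon_0\ll\varepsilon_1\ll1$.

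\textbf{Main step: $n\leq(1+\varepsilon_1^2)n'$.} This requires bounding the number of vertices of $H$ outside $V(T_{n',r})$, and it is where the real input lies.
\begin{itemize}
\item $H$ is induced by an edge set, so it has no isolated vertices, and by Claim \ref{claim5.1C} it has no light edges.
\item By Claim \ref{claim5.2C}, $\rho(H)\geq\sqrt{a h}$ with $a:=2(1-\frac1r)$. Since $r\geq3$, we have $a\geq\frac43>1$ and $a\leq2$.
\item Part (iii) of Lemma \ref{lem2.8C} therefore applies to $H$ and gives $\delta(H)\geq c\sqrt h$ with $c:=(\frac{a-1}{8})^9$. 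The point is that $c$ depends only on $r$, not on the small parameters $\varepsilon_0,\varepsilon_1$.
\end{itemize}

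Now every vertex of $V(H)\setminus V(T_{n',r})$ has all of its at least $c\sqrt h$ incident edges in $E(H)\setminus E(T_{n',r})$. Each such edge is counted at most twice, so
\begin{align*}
\tfrac12 (n-n')\,c\sqrt h\leq e\big(H\setminus T_{n',r}\big)\leq\varepsilon_0 h,
\end{align*}
that is, $n-n'\leq 2c^{-1}\varepsilon_0\sqrt h$. Combined with $n'\geq\frac12\sqrt{2h/(1-\frac1r)}\geq\frac12\sqrt h$, this gives $n-n'\leq 4c^{-1}\varepsilon_0 n'\leq\varepsilon_1^2 n'$, provided $\varepsilon_0\ll\varepsilon_1$ is chosen small relative to $c$.

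The only delicate point is checking that the hypotheses of Lemma \ref{lem2.8C}(iii) hold for $H$ with a constant $a>1$ independent of $\varepsilon_0$. This is exactly where $r\geq3$ is used. Everything else is bookkeeping with the hierarchy \eqref{align-020F}.
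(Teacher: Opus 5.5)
Your proposal is correct and follows the paper's proof essentially step for step. You bound $n'$ on both sides from $d(H,T_{n',r})\le\varepsilon_0 h$, use Lemma~\ref{lem2.8C}(iii) (no light edges, and $r\ge 3$ to get $a>1$) to obtain $\delta(H)=\Omega(\sqrt h)$, and then count $\frac12|R|\,\delta(H)\le\varepsilon_0 h$ for $R=V(H)\setminus V(T_{n',r})$. The differences are cosmetic. You take $a=2(1-\frac1r)$ where the paper takes $a=\frac43$, and you use a cruder rounding constant for $e(T_{n',r})$. Also, when $\ell=1$ the absence of isolated vertices in $H$ comes from the standing assumption $\delta(G)\ge 1$, not from $H$ being edge-induced.
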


\begin{proof}
By Claim \ref{claim5.2C}, we have $d(H,T_{n',r})\leq \varepsilon_{0} h$,
which implies that $n'$ is sufficiently large and $(1-\varepsilon_{0}) h\leq e(T_{n',r})\leq (1+\varepsilon_{0}) h$.
Furthermore, it is known that $\frac{r-1}{2r}{n'}^2-\frac{r}{8}\leq e(T_{n',r})\leq \frac{r-1}{2r}{n'}^2$,
which yields that
\begin{align*}
2e(T_{n',r})\big/\big(1\!-\!\frac1r\big)
\leq {n'}^2\leq\big(1\!+\!\varepsilon_{0}\big)2e(T_{n',r})\big/\big(1\!-\!\frac1r\big),
\end{align*}
Combining the two inequalities derived above, we can deduce that
\begin{align}\label{align-023F}
\sqrt{(1\!-\!\varepsilon_{0})2h\big/\big(1\!-\!\frac1r\big)}
\leq n'\leq (1\!+\!\varepsilon_{0})\sqrt{2h\big/\big(1\!-\!\frac1r\big)}.
\end{align}

By Claim \ref{claim5.1C}, $H$ admits no light edges,
and by Claim \ref{claim5.2C}, we know that $\rho^2(H)\geq \frac{4}{3}h$ since $r\geq 3$.
Setting $a=\frac{4}{3}$ and applying (iii) of Lemma \ref{lem2.8C} to $H$, we derive that
$\delta(H)\geq\sqrt{h}\big/24^9$.
Define $R:= V(H)\setminus V(T_{n',r})$ (which may be empty).
Then, we have
$$d\big(H,T_{n',r}\big)\geq e\big(R\big)\!+\!e\big(R, V(T_{n',r})\big)\geq\frac12\sum_{v\in R}d_{H}(v)\geq\frac1{2\times24^9}|R|\sqrt{h}.$$

Recall that $d(H,T_{n',r})\leq \varepsilon_{0} h$.
We further derive
$|R|\leq 2\times24^9\varepsilon_{0}\sqrt{h}.$
Combining this with the left-hand side of \eqref{align-023F} and noting that $\varepsilon_{0}\ll \varepsilon_{1}$,
we obtain $|R|\leq {\varepsilon_{1}}^2 n'$ and $n=n'+|R|\leq (1+{\varepsilon_{1}}^2) n'$.
Setting $\psi=\sqrt{2h/(1\!-\!\frac1r)}$ and again applying \eqref{align-023F}, we immediately obtain
\begin{align*}
\big(1\!-\!\varepsilon_{1}\big)\psi
\leq \big(1\!-\!\varepsilon_{0}\big)\psi
\leq n'\leq n\leq\big(1+{\varepsilon_{1}}^2\big)n'\leq\big(1+{\varepsilon_{1}}^2\big)\big(1+\varepsilon_{0}\big)\psi\leq\big(1+\varepsilon_{1}\big)\psi.
\end{align*}
Thus, we successfully prove the claim.
\end{proof}

Let $\cup_{i=1}^{r}V_i$ be a partition of $V(H)$
such that {\bf $\sum_{1\leq i<j\leq r}e(V_i,V_j)$ is maximized}.
To proceed with the proof by contradiction,
we aim to show that \(H\subseteq K_{V_1, V_2, \dots, V_r} \).
Indeed, if this inclusion holds, then $H$ is $K_{r+1}$-free,
and by Theorem \ref{thm1.7C}, we have $\rho^2(H)\leq (1-\frac{1}{r})2h$,
with equality if and only if $H$ is regular and complete \(r\)-partite.
Combining this with Claim \ref{claim5.2C} gives that $\ell=1$
and $\rho^2(H)=(1-\frac{1}{r})2h$.
It follows that $H=G_{(1)}=G$ and $H$ is regular and complete \(r\)-partite,
which contradicts the assumption that $G$ is not a regular complete $r$-partite graph.
Therefore, to complete the proof of Theorem \ref{thm1.8C},
it suffices to establish that
\(H\subseteq K_{V_1, V_2, \dots, V_r}\).

We next focus on demonstrating that \(H\subseteq K_{V_1, V_2, \dots, V_r} \).
First, we will show that $H$ is very close to the complete $r$-partite graph $K_{V_1,\dots,V_r}$.

\begin{claim}\label{claim5.4C}
We have
$\sum_{i\in[r]}e(H[V_i])\leq {\varepsilon_{1}}^2 n^2$ and $\big||V_i|-\frac {n}r\big|\leq 3\varepsilon_{1} n$
for each $i\in [r]$.
\end{claim}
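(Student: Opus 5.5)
The plan is to compare the max-cut partition $V_1,\dots,V_r$ with the partition coming from the Tur\'an graph $T_{n',r}$ of Claim \ref{claim5.2C}.

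Let $W_1,\dots,W_r$ be the parts of $T_{n',r}$, and extend them to a partition of $V(H)$ by placing the leftover set $R=V(H)\setminus V(T_{n',r})$ into $W_1$. We have $|R|\le \varepsilon_1^2 n'$ from the proof of Claim \ref{claim5.3C}.

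\textbf{Edges inside the parts.} Each part satisfies $|W_i|=n'/r+O(1)$ before adding $R$.

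\begin{itemize}
\item Every edge of $H$ inside some $W_i\cap V(T_{n',r})$ lies in $E(H)\setminus E(T_{n',r})$, so there are at most $\varepsilon_0 h$ such edges.
\item Edges incident to $R$ number at most $|R|\,n\le \varepsilon_1^2 n'n$.
\end{itemize}

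Since $h\le n^2$ and $\varepsilon_0\ll\varepsilon_1$, this gives $\sum_i e(H[W_i])\le \varepsilon_0 h+\varepsilon_1^2 n n'$. That bound is slightly too weak as written, so I will sharpen the $R$-term. By Claim \ref{claim5.3C} together with the bound $|R|\le 2\cdot 24^9\varepsilon_0\sqrt h$ from its proof, we get $|R|\le C\varepsilon_0 n$. Hence the edges incident to $R$ contribute at most $C\varepsilon_0 n^2$, and
$$\sum_i e(H[W_i])\le (\varepsilon_0+C\varepsilon_0)n^2\le \tfrac12\varepsilon_1^2 n^2.$$

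\textbf{Transfer to the max-cut partition.} Since $\sum_{i<j}e(V_i,V_j)$ is maximal, we have $\sum_i e(H[V_i]) = h-\sum_{i<j}e(V_i,V_j)\le h-\sum_{i<j}e(W_i,W_j)=\sum_i e(H[W_i])$. This yields the first inequality.

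\textbf{Part sizes (the main obstacle).} We must rule out an unbalanced max-cut partition. The cross-edges of $H$ number at most $\sum_{i<j}|V_i||V_j|$. On the other hand,
$$h-\varepsilon_1^2n^2\;\ge\;\Big(\tfrac{1}{1+\varepsilon_1}\Big)^2\Big(1-\tfrac1r\Big)\tfrac{n^2}{2}-\varepsilon_1^2n^2,$$
using $n\le(1+\varepsilon_1)\sqrt{2h/(1-1/r)}$ from Claim \ref{claim5.3C}. So $\sum_{i<j}|V_i||V_j|\ge (1-\frac1r-3\varepsilon_1)\frac{n^2}{2}$.

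Write $|V_i|=n/r+d_i$ with $\sum_i d_i=0$. The standard identity gives
$$\sum_{i<j}|V_i||V_j|=\tfrac12\Big(n^2-\sum_i|V_i|^2\Big)=\Big(1-\tfrac1r\Big)\tfrac{n^2}{2}-\tfrac12\sum_i d_i^2.$$
Therefore $\sum_i d_i^2\le 3\varepsilon_1 n^2$, so $|d_i|\le\sqrt{3\varepsilon_1}\,n$.

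This is weaker than the required $3\varepsilon_1 n$. To fix it, I will run the same argument with $\varepsilon_1^2$ in place of $\varepsilon_1$ throughout; the hierarchy $\varepsilon_0\ll\varepsilon_1$ allows this. That is, I will prove $\sum_i e(H[V_i])\le\varepsilon_1^4 n^2$ and use $n\le(1+\varepsilon_1^2)n'$ together with the sharper estimate \eqref{align-023F}, so that $n\le(1+2\varepsilon_0)\sqrt{2h/(1-1/r)}$ up to the $R$-correction $C\varepsilon_0$. The deficit then becomes $O(\varepsilon_1^4)n^2$, hence $|d_i|=O(\varepsilon_1^2)n\le 3\varepsilon_1 n$.

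The delicate point is exactly this bookkeeping: every loss must be $O(\varepsilon_0)$ or $O(\varepsilon_1^4)$, not $O(\varepsilon_1)$. In particular the upper bound on $n$ must come from \eqref{align-023F} and $|R|=O(\varepsilon_0 n)$, not from the cruder form stated in Claim \ref{claim5.3C}.
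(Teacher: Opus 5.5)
Your proof is correct and follows essentially the paper's route: bound the internal edges of the $T_{n',r}$ classes (with $R$ adjoined to one class), transfer this to the max-cut partition, and compare $h-\sum_i e(H[V_i])$ with $\sum_{i<j}|V_i||V_j|$, where your identity $\sum_{i<j}|V_i||V_j|=(1-\frac1r)\frac{n^2}{2}-\frac12\sum_i d_i^2$ replaces the paper's Cauchy--Schwarz step, and your corrected lower bound on $h$ via \eqref{align-023F} and $|R|=O(\varepsilon_0 n)$ does make the deficit small enough. The detour was avoidable: every edge of $H$ meeting $R$ already lies in $E(H)\setminus E(T_{n',r})$, so the internal edges number at most $d(H,T_{n',r})\le\varepsilon_0 h$ directly, and the paper gets $e(H)\ge\frac{r-1}{2r}n^2-3\varepsilon_1^2n^2$ from $e(H)\ge e(T_{n',r})-\varepsilon_0 h$ and $n\le(1+\varepsilon_1^2)n'$, which already gives a deficit of order $\varepsilon_1^2n^2$ and hence $|d_i|<3\varepsilon_1 n$.
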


\begin{proof}
Let $U_1,U_2,\dots,U_r$ be the color classes of $T_{n',r}$.
Recall that $R= V(H)\setminus V(T_{n',r})$.
Define $U_1^*=U_1\cup R$ and $U_i^*=U_i$ for $2\leq i\leq r$.
Since every $U_i$ is an independent set of $T_{n',r}$,
it follows that
$d(H,T_{n',r})\geq\sum_{i\in[r]}e(H[U_i^*])$.
Recall that $d(H,T_{n',r})\leq \varepsilon_{0} h$.
We further deduce that
\begin{align}\label{align-026H}
\sum_{i\in[r]}e\big(H[U_i^*]\big)\leq d\big(H,T_{n',r}\big)\leq \varepsilon_{0} h\leq {\varepsilon_{1}}^2 n^2,
\end{align}
where the last inequality follows from $\varepsilon_{0} \ll \varepsilon_{1}$ and $h\leq \binom{n}{2}$.

Observe that $\cup_{i\in[r]}U_i^*$ is a partition of $V(H)$,
and recall that $\cup_{i\in[r]}V_i$ is also a partition of $V(H)$
such that $\sum_{1\leq i<j\leq r}e(V_i,V_j)$ is maximized.
Thus, we have
$$\sum_{i\in[r]}e\big(H[V_i]\big)\leq \sum_{i\in[r]}e\big(H[U_i^*]\big)\leq{\varepsilon_{1}}^2 n^2.$$

Define $\alpha=\max\big\{\big||V_i|-\frac {n}r\big|: i\in [r]\big\}$,
and assume without loss of generality that $\alpha=\big||V_1|-\frac {n}r\big|$.
By the Cauchy-Schwarz inequality, we obtain
$\big(\sum_{i=2}^{r}|V_i|\big)^2\leq(r-1)\sum_{i=2}^{r}|V_i|^2$.
Hence,
\begin{equation*}
\sum_{2\leq i<j\leq r}\!\!\!2|V_i||V_j|=
\Big(\sum_{i=2}^{r}|V_i|\Big)^2\!-\!\sum_{i=2}^{r}|V_i|^2
\leq\frac{r\!-\!2}{r\!-\!1}\Big(\sum_{i=2}^{r}|V_i|\Big)^2=\frac{r\!-\!2}{r\!-\!1}\big(n\!-\!|V_1|\big)^2.
\end{equation*}
Consequently, we can deduce that
\begin{align}\label{align-025F}
e(H)
&\leq \sum_{1\leq i<j\leq r}\!\!\!\!|V_i||V_j|+\sum_{i\in[r]}\!e\big(H[V_i]\big)
\leq |V_1|(n\!-\!|V_1|)\!+\!\!\sum_{2\leq i<j\leq r}\!\!\!\!|V_i||V_j|
\!+\!{\varepsilon_{1}}^2 n^2 \nonumber\\
&\leq |V_1|(n\!-\!|V_1|)\!+\!\frac{r\!-\!2}{2(r\!-\!1)}\big(n\!-\!|V_1|\big)^2\!+\!{\varepsilon_{1}}^2 n^2 \nonumber\\
&=-\frac{r}{2(r\!-\!1)}\alpha^2\!+\!\frac{r\!-\!1}{2r}n^2\!+\!{\varepsilon_{1}}^2 n^2,
\end{align}
where the last equality holds as $\alpha=\big||V_1|-\frac{n}r\big|$.

From Claim \ref{claim5.3C}, we know that $n'\leq n\leq(1\!+\!{\varepsilon_{1}}^2)n'$,
which implies that
$e(T_{n,r})\!-\!e(T_{n',r})<(n\!-\!n')n\leq{\varepsilon_{1}}^2 n'n\leq {\varepsilon_{1}}^2 n^2.$
Thus, we derive $e(T_{n',r})\geq \frac{r-1}{2r}n^2-2{\varepsilon_{1}}^2 n^2$, since $e(T_{n,r})\geq \frac{r-1}{2r}{n}^2-\frac{r}{8}$.
In light of \eqref{align-026H}, we deduce that
\begin{align}\label{align-026F}
e(H)\geq e(T_{n',r})\!-\!{\varepsilon_{1}}^2 n^2\geq\frac{r\!-\!1}{2r}n^2\!-\!3{\varepsilon_{1}}^2 n^2.
\end{align}
Combining this with \eqref{align-025F} gives
$\frac{r}{2(r-1)}\alpha^2\leq 4{\varepsilon_{1}}^2n^2$.
Hence, $\alpha<\sqrt{8}\varepsilon_{1} n<3\varepsilon_{1} n$, as claimed.
\end{proof}

We will define three subsets $S^{(1)}$,  $S^{(2)}$, and $\cup_{i\in[r]}W_i$ of $V(H)$ based on vertex degrees.
The next step is to investigate their sizes.

\begin{claim}\label{claim5.5C}
Let $S^{(k)}=\{v\in V(H): d_H(v)\leq(1\!-\!\frac{1}{r}\!-\!4\varepsilon_{k})n\}$.
We have $|S^{(k)}|\leq \varepsilon_{1} n$ for $k\in \{1,2\}$.
\end{claim}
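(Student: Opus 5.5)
The claim can be obtained by double counting edges: vertices of low degree cost edges, while Claim \ref{claim5.4C} and \eqref{align-026F} force $e(H)$ to be very close to the Tur\'an value $\frac{r-1}{2r}n^2$. Since $S^{(1)}\subseteq S^{(2)}$ (because $\varepsilon_1\ll\varepsilon_2$), it suffices to prove $|S^{(2)}|\le \varepsilon_1 n$. Note also that both this claim and the earlier ones live in $H$, with $n=|H|$.

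First I obtain an upper bound on the number of edges of $H$ outside $S:=S^{(2)}$. Write $s:=|S|$. Then $H-S$ has $n-s$ vertices. By Claim \ref{claim5.4C}, it differs from an $r$-partite graph by at most $\varepsilon_1^2 n^2$ class-edges, so
$$e(H-S)\le \frac{r-1}{2r}(n-s)^2+\varepsilon_1^2n^2.$$
On the other hand,
$$e(H)\le e(H-S)+\sum_{v\in S}d_H(v)\le e(H-S)+s\Big(1-\frac1r-4\varepsilon_2\Big)n.$$
Combining the two estimates gives
$$e(H)\le \frac{r-1}{2r}\big(n^2-2sn+s^2\big)+s\Big(1-\frac1r\Big)n-4\varepsilon_2 sn+\varepsilon_1^2n^2 =\frac{r-1}{2r}n^2+\frac{r-1}{2r}s^2-4\varepsilon_2 sn+\varepsilon_1^2n^2 .$$
Comparing with the lower bound \eqref{align-026F}, $e(H)\ge \frac{r-1}{2r}n^2-3\varepsilon_1^2n^2$, we get
$$4\varepsilon_2 sn-\frac{r-1}{2r}s^2\le 4\varepsilon_1^2n^2 .$$

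The main obstacle is the $s^2$ term: the inequality above only rules out $s$ in a middle range and says nothing if $s$ is large. I would handle this by contradiction. Suppose $s>\varepsilon_1 n$, and pass to a subset $S'\subseteq S$ with $|S'|=\lceil\varepsilon_1 n\rceil$. The computation goes through verbatim with $S'$ in place of $S$, since it only used that every vertex of the removed set has degree at most $(1-\frac1r-4\varepsilon_2)n$. With $s'=|S'|\approx\varepsilon_1 n$ the left side is at least
$$4\varepsilon_2\varepsilon_1 n^2-\tfrac12\varepsilon_1^2n^2>4\varepsilon_1^2n^2,$$
because $\varepsilon_1\ll\varepsilon_2$. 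This contradicts the inequality, so $|S^{(2)}|\le\varepsilon_1 n$, and hence also $|S^{(1)}|\le\varepsilon_1 n$.
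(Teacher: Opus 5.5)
\textbf{The reduction to $S^{(2)}$ goes the wrong way.} Since $\varepsilon_1<\varepsilon_2$, the threshold $(1-\frac1r-4\varepsilon_1)n$ defining $S^{(1)}$ is \emph{larger} than the threshold defining $S^{(2)}$. Hence $S^{(2)}\subseteq S^{(1)}$, exactly as recorded later in Claim~\ref{claim5.10C}, and not the reverse. Your argument therefore proves only the easy case $k=2$. The binding case is $k=1$, where a vertex of $S'$ is only known to fall $4\varepsilon_1 n$ short of the Tur\'an degree.

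Rerun your computation with $\varepsilon_2$ replaced by $\varepsilon_1$ and $s'=\lceil\varepsilon_1 n\rceil$. The left-hand side becomes about $(4-\frac{r-1}{2r})\varepsilon_1^2n^2$, which is below $4\varepsilon_1^2n^2$, so there is no contradiction. The reason is that the error terms you import are of the same order $\varepsilon_1^2n^2$ as the gain $4\varepsilon_1 s'n$ from the low-degree vertices. These error terms are the $\varepsilon_1^2n^2$ class edges from Claim~\ref{claim5.4C} and the $3\varepsilon_1^2n^2$ loss in \eqref{align-026F}. Optimizing over the size of $S'$ only gives roughly $|S^{(1)}|\le 1.2\,\varepsilon_1 n$, not the stated bound.

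\textbf{The missing ingredient is a sharper upper bound on $e(H-S')$, whose error is negligible compared with $\varepsilon_1^2n^2$.} The paper gets it from the hypothesis $N_F(H)=o(h^{f/2})=o(n^f)$. Using \eqref{align-026F} and the degree bound $(1-\frac1r-4\varepsilon_1)n$, it shows $e(H-S')\ge e(T_{|H-S'|,r})+\frac12\varepsilon_1^2|H-S'|^2$. Supersaturation then forces $\Omega(|H-S'|^f)$ copies of $F$, which is a contradiction.

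In your framework this amounts to replacing $\varepsilon_1^2n^2$ by $o(n^2)$ in your bound on $e(H-S')$. Your inequality then becomes $4\varepsilon_1 s'n-\frac{r-1}{2r}s'^2\le 3\varepsilon_1^2n^2+o(n^2)$. At $s'=\lceil\varepsilon_1 n\rceil$ this is impossible, because $4-\frac{r-1}{2r}>3$.

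Alternatively, you could keep a purely edge-counting proof by using the $\varepsilon_0$-level estimates inside the proofs of Claims~\ref{claim5.3C} and \ref{claim5.4C}. Those proofs give $\sum_i e(H[V_i])\le\varepsilon_0 h$ by \eqref{align-026H} and the maximality of the partition. They also give $e(H)\ge\frac{r-1}{2r}n^2-O(\varepsilon_0)n^2$, using $|R|=O(\varepsilon_0\sqrt h)$. Both errors are negligible against $\varepsilon_1^2n^2$ because $\varepsilon_0\ll\varepsilon_1$.
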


\begin{proof}
By way of contradiction, assume that $|S^{(k)}|>\varepsilon_{1} n$.
Then, $S^{(k)}$ must contain a subset $S'$ of size $\lfloor\varepsilon_{1} n\rfloor$.
Denote by $H'$ the subgraph of $H$ induced by $V(H)\setminus S'$.
For $k\in\{1,2\}$ and any vertex $v\in S'$,
we have $d_{H}(v)\leq \big(1-\frac{1}{r}-4\varepsilon_{1} \big)n$ since $\varepsilon_1\ll \varepsilon_2$.
In light of \eqref{align-026F}, we have
\begin{align*}
e(H')&\geq e(H)\!-\!\sum_{v\in S'}d_{H}(v)
\geq \frac{r\!-\!1}{2r}n^2\!-\!3{\varepsilon_{1}}^2 n^2\!-\!\varepsilon_{1}n\big(1\!-\!\frac{1}{r}\!-\!4\varepsilon_{1}\big)n\nonumber\\
&=\big(\frac{r\!-\!1}{2r}\!-\!\frac{r\!-\!1}{r}\varepsilon_{1}\!+\!{\varepsilon_{1}}^{2}\big)n^2
\geq\big(\frac{r\!-\!1}{2r}\!+\!\frac12{\varepsilon_{1}}^2\big){\big|H'\big|}^2,
\end{align*}
where the last inequality follows from $|H'|=n\!-\!|S'|=[(1\!-\!\varepsilon_1)n\rceil$.

Since $e(T_{|H'|,r})\leq\frac{r-1}{2r}{|H'|}^2$,
we further derive that $e(H')\geq e(T_{|H'|,r})+\frac12(\varepsilon_1|H'|)^2.$
By applying Theorem \ref{thm1.1C} to $H'$, we know that $N_F(H')\geq\frac12(\varepsilon_1|H'|)^2\cdot c(|H'|,F)$.
Moreover, by Lemma \ref{lem2.2C}, we have $c(|H'|,F)=\Theta(|H'|^{f-2})$.
Thus, we conclude that
\begin{align}\label{align-027F}
N_F(H)\geq N_F(H')=\Omega\big({|H'|}^{f}\big).
\end{align}

On the other hand,
Claim \ref{claim5.3C} indicates that $h=\Theta(n^2)=\Theta(|H'|^2)$.
Based on \eqref{align-022F}, we have
$N_F(H)=o(h^{\frac{f}{2}})=o({|H'|}^{f}),$
which contradicts \eqref{align-027F}.
Hence, $|S^{(k)}|\leq \varepsilon_{1} n$, as claimed.
\end{proof}

\begin{claim}\label{claim5.6C}
Define $W_i=\{v\in V_i: d_{V_i}(v)\geq4\varepsilon_{1} n\}$.
We have $\sum_{i\in[r]}|W_i|\leq \varepsilon_{1} n$.
\end{claim}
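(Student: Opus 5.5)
We bound $\sum_i|W_i|$ by counting class-edges and using the bound from Claim \ref{claim5.4C}. Each vertex $v\in W_i$ has at least $4\varepsilon_1 n$ neighbours inside its own class $V_i$. Summing these degrees over $W_i$ counts every edge of $H[V_i]$ at most twice, so
$$\sum_{v\in W_i} d_{V_i}(v)\leq 2e\big(H[V_i]\big).$$
Summing over $i\in[r]$ and applying Claim \ref{claim5.4C} gives
$$4\varepsilon_1 n\sum_{i\in[r]}|W_i|\leq 2\sum_{i\in[r]}e\big(H[V_i]\big)\leq 2\varepsilon_1^2n^2,$$
hence
$$\sum_{i\in[r]}|W_i|\leq \tfrac12\varepsilon_1 n\leq \varepsilon_1 n.$$

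No auxiliary structure is needed here; the only input is the class-edge bound $\sum_i e(H[V_i])\le\varepsilon_1^2n^2$ from Claim \ref{claim5.4C}.

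The only point to check is that the threshold $4\varepsilon_1 n$ in the definition of $W_i$ is matched to that bound. It is: the factor $4$ even leaves slack, and the result holds with $\tfrac12\varepsilon_1 n$ in place of $\varepsilon_1 n$.

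The max-cut property of the partition $\cup_i V_i$, i.e.\ that $d_{V_i}(v)\le d_{V_j}(v)$ for every $j$, is not needed for this counting. It will matter only later, when bounding the degrees of vertices inside $W_i$.
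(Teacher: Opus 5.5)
Your argument is correct and is essentially the paper's own proof. Both bound $\sum_{v\in W_i} d_{V_i}(v)$ by $2e(H[V_i])$ and then apply the class-edge bound $\sum_i e(H[V_i])\le\varepsilon_1^2n^2$ from Claim~\ref{claim5.4C}; the paper simply discards a factor of $2$ along the way, so your sharper constant $\tfrac12\varepsilon_1 n$ is harmless but immaterial.
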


\begin{proof}
For each $i\in [r]$, we see that
\begin{equation*}
e\big(H[V_i]\big)=\sum\limits_{v\in V_i}\frac12d_{V_i}(v)\geq
\sum\limits_{v\in W_i}\frac12d_{V_i}(v)\geq\varepsilon_{1} n|W_i|,
\end{equation*}
which implies that
$\sum_{i\in[r]}e(H[V_i])\geq\varepsilon_{1} n\sum_{i\in[r]}|W_i|.$
On the other hand, by Claim \ref{claim5.4C}, we have $\sum_{i\in[r]}e(H[V_i])\leq {\varepsilon_{1}}^2 n^2$.
It follows that $\sum_{i\in[r]}|W_i|\leq\varepsilon_{1} n$, as desired.
\end{proof}

\begin{claim}\label{claim5.7C}
For $k\in \{1,2\}$ and some fixed $i_0\in [r]$, let $V^{(k)}_{i_0}:=V_{i_0}\setminus (W_{i_0}\cup S^{(k)})$.
If there exist $u_0\in \bigcup_{i\in[r]\setminus\{i_0\}}(W_{i}\setminus S^{(k)})$ and
$u_1,\dots,u_{f}\in\bigcup_{i\in [r]\setminus\{i_0\}}\!V^{(k)}_i$,
then

\vspace{1mm}
{\rm (i)}
$u_1,\dots,u_f$ have at least $(\frac{1}{r}-3f^2\varepsilon_{2})n$ common neighbors in $V_{i_0}^{(k)}$;

\vspace{0.5mm}
{\rm (ii)}
$u_0,u_1,\dots,u_{f}$ have at least $\frac{n}{2r^2}$ common neighbors in $V^{(k)}_{i_0}$.

\end{claim}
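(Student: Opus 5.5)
For each $u_s$ ($1\le s\le f$) I will bound the number of vertices of $V_{i_0}$ that are \emph{not} adjacent to $u_s$. For $u_0$ I will use the maximality of the partition $\bigcup_{i\in[r]}V_i$ to bound $d_{V_{i_0}}(u_0)$ from below. Common neighbourhoods then follow by inclusion--exclusion. Finally I remove $W_{i_0}\cup S^{(k)}$, which has at most $2\varepsilon_1 n$ vertices by Claims \ref{claim5.5C} and \ref{claim5.6C}. Throughout I use $\varepsilon_k\le\varepsilon_2$ for $k\in\{1,2\}$ and $\varepsilon_1\ll\varepsilon_2$.

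\emph{Step 1 (few non-neighbours).} Fix $u=u_s\in V_j^{(k)}$ with $j\ne i_0$.
\begin{itemize}
\item Since $u\notin S^{(k)}$, we have $d_H(u)\ge(1-\frac1r-4\varepsilon_k)n$.
\item Since $u\notin W_j$, we have $d_{V_j}(u)<4\varepsilon_1 n$, so $u$ has at least $(1-\frac1r-4\varepsilon_k-4\varepsilon_1)n$ neighbours in $V(H)\setminus V_j$.
\item By Claim \ref{claim5.4C}, $|V(H)\setminus V_j|\le(1-\frac1r+3\varepsilon_1)n$.
\end{itemize}
Hence $u$ has at most $(4\varepsilon_k+7\varepsilon_1)n\le5\varepsilon_2 n$ non-neighbours in $V(H)\setminus V_j$, and in particular in $V_{i_0}$.

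\emph{Step 2 (part (i)).} By Claim \ref{claim5.4C}, $|V_{i_0}|\ge(\frac1r-3\varepsilon_1)n$. Removing the non-neighbours of $u_1,\dots,u_f$ and the set $W_{i_0}\cup S^{(k)}$ leaves at least
\[
\Big(\frac1r-3\varepsilon_1-5f\varepsilon_2-2\varepsilon_1\Big)n\ge\Big(\frac1r-3f^2\varepsilon_2\Big)n
\]
common neighbours in $V^{(k)}_{i_0}$. The last inequality holds since $f\ge4$ and $\varepsilon_1\ll\varepsilon_2$.

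\emph{Step 3 (part (ii)).} Let $u_0\in W_j\setminus S^{(k)}$ with $j\ne i_0$. If $d_{V_j}(u_0)>d_{V_{i_0}}(u_0)$, moving $u_0$ from $V_j$ to $V_{i_0}$ would strictly increase $\sum_{1\le i<j\le r}e(V_i,V_j)$. This contradicts maximality, so $d_{V_{i_0}}(u_0)\ge d_{V_j}(u_0)$.

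Next, $d_H(u_0)=\sum_i d_{V_i}(u_0)\ge(1-\frac1r-4\varepsilon_2)n$. The $r-2$ parts other than $V_j,V_{i_0}$ contribute at most $(r-2)(\frac1r+3\varepsilon_1)n$. Hence
\[
2d_{V_{i_0}}(u_0)\ge d_{V_j}(u_0)+d_{V_{i_0}}(u_0)\ge\Big(\frac1r-4\varepsilon_2-3r\varepsilon_1\Big)n,
\]
so $d_{V_{i_0}}(u_0)\ge(\frac1{2r}-3\varepsilon_2)n$.

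Now subtract the at most $5f\varepsilon_2 n$ non-neighbours of $u_1,\dots,u_f$ from Step 1, and the at most $2\varepsilon_1 n$ vertices of $W_{i_0}\cup S^{(k)}$. This leaves at least $(\frac1{2r}-(5f+4)\varepsilon_2)n\ge\frac{n}{2r^2}$ common neighbours in $V^{(k)}_{i_0}$. The last inequality holds because $\frac1{2r}-\frac1{2r^2}$ is a positive constant for $r\ge3$ and $\varepsilon_2$ is small in terms of $f$.

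\emph{Main obstacle.} The only nontrivial point is Step 3: $u_0$ may have linear degree inside its own class, so Step 1 does not apply to it. The max-cut choice of the partition is exactly what forces $u_0$ to send at least as many edges into $V_{i_0}$ as into $V_j$. The degree-counting must also avoid losing more than a constant fraction of $\frac1{2r}$.
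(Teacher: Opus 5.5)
Your proof is correct and follows essentially the same route as the paper: a degree count for $u_1,\dots,u_f$ using $u_s\notin W_j\cup S^{(k)}$ and Claim~\ref{claim5.4C}, inclusion--exclusion over non-neighbours, removal of $W_{i_0}\cup S^{(k)}$, and the max-cut choice of the partition to handle $u_0$. The one small difference is in (ii): you use the pairwise comparison $d_{V_{i_0}}(u_0)\ge d_{V_j}(u_0)$ and get $d_{V_{i_0}}(u_0)\ge(\frac{1}{2r}-3\varepsilon_2)n$, whereas the paper uses the averaged consequence $d_{V_{i'}}(u_0)\le\frac1r d_H(u_0)$ and gets the weaker but still sufficient $(\frac{1}{r^2}-2f\varepsilon_2)n$.
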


\begin{proof}
{\rm (i)}
Given that $j\in [f]$, we may assume that $u_j\in V^{(k)}_{i_j}$ for some $i_j\in [r]\setminus\{i_0\}$.
Since $V^{(k)}_{i_j}=V_{i_j}\setminus (W_{i_j}\cup S^{(k)})$, the definitions of $W_{i_j}$ and $S^{(k)}$ imply that
$d_{V_{i_j}}(u_j)<4\varepsilon_{1}n<2\varepsilon_{2}n$ and $d_H(u_j)>\big(1-\frac{1}{r}-4\varepsilon_{2}\big)n$.
Also, recall that $|V_{i}|\leq(\frac1r+3\varepsilon_{1})n<(\frac1r+2\varepsilon_{2})n$ for any $i\in [r]$.
Thus,
\begin{align}\label{align-034F}
d_{V_{i_0}}(u_j)
&=d_H(u_j)\!-\!d_{V_{i_j}}(u_j)
\!-\!\!\!\!\sum_{i\in [r]\setminus\{i_0,i_j\}}\!\!\!\!d_{V_{i}}(u_j)
>\big(1\!-\!\frac{1}{r}\!-\!6\varepsilon_{2}\big)n\!-\!\big(r\!-\!2\big)\big(\frac{1}{r}\!+\!2\varepsilon_{2}\big)n\nonumber\\
&=\big(\frac{1}{r}\!-\!2(r\!+\!1)\varepsilon_{2}\big)n\geq\big(\frac{1}{r}\!-\!2f\varepsilon_{2}\big)n,
\end{align}
where the last inequality holds as $f\geq\chi(F)=r+1$.
From \eqref{align-034F}, we can deduce that
\begin{align*}
\Big|\bigcap_{j=1}^fN_{V_{i_0}}(u_j)\Big|
&\geq \sum_{j=1}^fd_{V_{i_0}}(u_j)\!-\!(f\!-\!1)\Big|\bigcup_{j=1}^fN_{V_{i_0}}(u_j)\Big|
\geq \sum_{j=1}^fd_{V_{i_0}}(u_j)\!-\!(f\!-\!1)\Big|V_{i_0}\Big|\\
&> f\Big(\frac{1}{r}\!-\!2f\varepsilon_{2}\Big)n\!-\!(f\!-\!1)\Big(\frac{1}{r}\!+\!2\varepsilon_{2}\Big)n
\geq\Big(\frac{1}{r}\!-\!(3f^2\!-\!2)\varepsilon_{2}\Big)n.
\end{align*}
Claims \ref{claim5.5C} and \ref{claim5.6C} give that
$|W_{i_0}\cup S^{(k)}| \leq 2\varepsilon_{1} n\leq 2\varepsilon_{2} n$.
By the definition of $V^{(k)}_{i_0}$, we derive
\begin{align}\label{align-028H}
\Big|\bigcap_{j=1}^fN_{V_{i_0}^{(k)}}(u_j)\Big|\geq \Big|\bigcap_{j=1}^fN_{V_{i_0}}(u_j)\Big|\!-\!\Big|W_{i_0}\cup S^{(k)}\Big|
\geq\Big(\frac{1}{r}\!-\!3f^2\varepsilon_{2}\Big)n.
\end{align}
Hence, there exist at least $(\frac{1}{r}-3f^2\varepsilon_{2})n$ vertices in $V_{i_0}^{(k)}$ that are adjacent to $u_1,\dots,u_f$.

{\rm (ii)}
Without loss of generality, assume that $u_0\in W_{i'}\setminus S^{(k)}$
for some ${i'}\in [r]\setminus \{i_0\}$.
By the definition of  $S^{(k)}$, we have $d_H(u_0)>(1-\frac{1}{r}-4\varepsilon_{k})n\geq (1-\frac{1}{r}-4\varepsilon_{2})n$.
Since $\cup_{i\in[r]}V_i$ is a partition of $V(H)$
such that $\sum_{1\leq i<j\leq r}e(V_i,V_j)$ is maximized,
this implies that $d_{V_{i'}}(u_0)\leq \frac{1}{r}d_H(u_0)$.
Recall that $|V_{i}|\leq(\frac{1}{r}+2\varepsilon_{2})n$ for each $i\in [r]$.
Thus, we can deduce that
 \begin{align}\label{align-028F}
d_{V_{i_0}}(u_0)
&=d_H(u_0)\!-\!d_{V_{i'}}(u_0)\!-\!\!\!\!\sum_{i\in [r]\setminus\{i_0,i'\}}\!\!\!\!\!d_{V_{i}}(u_0)
\geq \big(1\!-\!\frac1r\big)d_H(u_0)\!-\!\big(r\!-\!2\big)\big(\frac{1}{r}\!+\!2\varepsilon_{2}\big)n\nonumber\\
&>\big(1\!-\!\frac1r\big)\big(1\!-\!\frac{1}{r}\!-\!4\varepsilon_{2}\big)n\!-\!\big(r\!-\!2\big)\big(\frac{1}{r}\!+\!2\varepsilon_{2}\big)n
>\big(\frac{1}{r^2}\!-\!2f\varepsilon_{2}\big)n,
\end{align}
where the last inequality holds as $4(1\!-\!\frac1r)\!+\!2(r\!-\!2)\!<\!2r\!<\!2f$.
Combining \eqref{align-028H}, \eqref{align-028F}, and recalling that $|V_{i_0}|\leq(\frac{1}{r}+2\varepsilon_{2})n$,
we derive that
\begin{align*}
\Big|\bigcap_{j=0}^{f}N_{V_{i_0}}(u_j)\Big|
\geq d_{V_{i_0}}(u_0)\!+\!\Big|\bigcap_{j=1}^fN_{V_{i_0}^{(k)}}(u_j)\Big|\!-\!\Big|V_{i_0}\Big|
\geq \Big(\frac{1}{r^2}\!-\!\big(3f^2\!+\!2f\!+\!2\big)\varepsilon_{2}\Big)n.
\end{align*}
Recall that $|W_{i_0}\cup S^{(k)}|\leq2\varepsilon_{2} n$.
Using a similar argument as in \eqref{align-028H}, we obtain that
$$\Big|\bigcap_{j=0}^{f}N_{V_{i_0}^{(k)}}(u_j)\Big|\geq\Big(\frac{1}{r^2}\!-\!\big(3f^2\!+\!2f\!+\!4\big)\varepsilon_{2}\Big)n \ge \frac{n}{2r^2}.$$
Hence, there exist at least $\frac{n}{2r^2}$ vertices in $V_{i_0}^{(k)}$ that are adjacent to $u_0,u_1,\dots,u_{f}$, as required.
\end{proof}

For every edge $e\in \cup_{i\in [r]}E(H[V_i])$,
let $N_F(H,e)$ denote the number of copies of $F$ in $H$ that contain only the edge $e$ from $ \cup_{i\in[r]}E(H[V_i])$.
Since $F$ is a color-critical graph with $|F|=f$ and $\chi(F)=r+1$,
there exists a good edge $xy\in E(F)$ such that $F\!-\!xy$ is $r$-partite.
We may assume that $L_1,L_2,\ldots,L_r$ are the $r$ color classes of $F\!-\!xy$,
where $F[L_1]$ contains the edge $xy$.

\begin{claim}\label{claim5.8C}
If $uv$ is an edge in $E(H[V_i])$ for some $i\in [r]$, with $u\in V_i\setminus S^{(2)}$ and $v\in V^{(2)}_i$,
then there exists a constant $\gamma>0$ such that $N_F(H,uv)\geq \gamma \cdot c(n,F)$.
\end{claim}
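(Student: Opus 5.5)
We will count copies of $F$ in $H$ in which the good edge $xy$ of $F$ is mapped onto $uv$ and every other vertex of $F$ lands in the ``clean'' parts $V^{(2)}_j$. Each such copy uses no class-edge other than $uv$, so it contributes to $N_F(H,uv)$. Since $c(n,F)=\Theta(n^{f-2})$ by Lemma \ref{lem2.2C}, it suffices to produce $\Omega(n^{f-2})$ such embeddings, with the implied constant depending only on $r$ and $f$. We then take $\gamma$ to be that constant divided by $2\alpha_F$.

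We set $\varphi(x)=u$ and $\varphi(y)=v$, where $u,v\in V_i$. We then embed the remaining $f-2$ vertices of $F$ greedily, one at a time. A vertex $z$ of colour class $L_j$ is sent into $V^{(2)}_{\sigma(j)}$, where $\sigma$ is a bijection $[r]\to[r]$ with $\sigma(1)=i$. Vertices of $L_1\setminus\{x,y\}$ are therefore placed in $V^{(2)}_i\setminus\{u,v\}$. Vertices of $L_j$ for $j\ge 2$ are placed in $V^{(2)}_{\sigma(j)}$, a part different from $V_i$.

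When we place a vertex $z$ into part $V^{(2)}_{i_0}$, its already-embedded neighbours in $F$ all lie in parts other than $V_{i_0}$. These neighbours are of two kinds. The first kind consists of vertices of $\bigcup_{j\ne i_0}V^{(2)}_j$, which includes $v$ whenever $i_0\ne i$. The second kind is possibly $u$, which lies in $V_i\setminus S^{(2)}$ but may belong to $W_i$.

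If $u\notin W_i$, then $u\in V^{(2)}_i$ and Claim \ref{claim5.7C}(i) applies. It gives at least $(\frac1r-3f^2\varepsilon_2)n$ common neighbours in $V^{(2)}_{i_0}$, padding with dummy vertices if there are fewer than $f$ earlier neighbours. If $u\in W_i\setminus S^{(2)}$, we take $u_0=u$, and Claim \ref{claim5.7C}(ii) gives at least $\frac{n}{2r^2}$ common neighbours in $V^{(2)}_{i_0}$.

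In either case, after discarding at most $f$ already-used vertices, each of the $f-2$ steps has at least $\frac{n}{2r^2}-f\ge\frac{n}{3r^2}$ choices. Hence the number of embeddings is at least $(\frac{n}{3r^2})^{f-2}$.

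Every edge of $F-xy$ joins different colour classes, so it is mapped to a cross pair between distinct parts. Adjacency of such pairs is exactly what we guaranteed by choosing common neighbours. The edge $xy$ is mapped to $uv$, which is an edge of $H$. So each embedding is an edge-preserving injection, and its image uses exactly one class-edge, namely $uv$.

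Dividing by $|\mathrm{Aut}(F)|$ gives at least $(\frac{n}{3r^2})^{f-2}/|\mathrm{Aut}(F)|$ distinct copies contributing to $N_F(H,uv)$. Comparing with $c(n,F)<2\alpha_F n^{f-2}$ from Lemma \ref{lem2.2C}, we obtain
\[
\gamma=\frac{(3r^2)^{-(f-2)}}{2\alpha_F\,|\mathrm{Aut}(F)|}.
\]

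The main obstacle is the asymmetry between $u$ and $v$. Since $v\in V^{(2)}_i$, it behaves like a typical vertex. The vertex $u$, however, may have many neighbours inside its own part, so only the weaker guarantee of Claim \ref{claim5.7C}(ii) is available for it. That guarantee requires at most one ``bad'' vertex among the common-neighbourhood constraints, which holds here because every other embedded vertex lies in some $V^{(2)}_j$.

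We also need $f-2$ to be smaller than the sizes of the target sets $V^{(2)}_j$. This follows from $|V^{(2)}_j|\ge(\frac1r-5\varepsilon_1)n$, which combines Claims \ref{claim5.4C}--\ref{claim5.6C}. Finally, $\sigma$ and the roles of $x$ and $y$ are fixed once and for all, so no overcounting issue arises beyond the automorphism division.
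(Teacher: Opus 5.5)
Your proof is correct and is essentially the paper's argument: you map the good edge $xy$ onto $uv$, place the remaining vertices in the sets $V^{(2)}_j$ using the common neighbourhoods from Claim~\ref{claim5.7C} (using part (ii) only to absorb the possibly bad vertex $u\in W_i\setminus S^{(2)}$), obtain $\Omega(n^{f-2})$ copies, and compare with $c(n,F)=\Theta(n^{f-2})$ via Lemma~\ref{lem2.2C}. The only difference is cosmetic: the paper embeds, class by class, the complete $r$-partite graph $K_{\widetilde L_1,\dots,\widetilde L_r}$ plus $uv$ (which contains $F$), whereas you embed $F$ vertex by vertex and make the automorphism division and the constant $\gamma$ explicit.
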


\begin{proof}
Assume, without loss of generality, that $uv\in E(H[V_1])$.
Note that $|L_1|<f$. By Claim \ref{claim5.4C}, it follows that $|V_1|\geq(\frac1r-3\varepsilon_{1})n>f>|L_1|.$
Firstly, we can choose $|L_1|-2$ vertices from $V_{1}^{(2)}\setminus\{u,v\}$
and combine them with $\{u,v\}$ to form an $|L_1|$-subset $\widetilde{L}_1$ of $V_1$.

Since $u\in V_1\setminus S^{(2)}$, it follows that either $u\in W_1\setminus S^{(2)}$ or $u\in V^{(2)}_1$.
Moreover, $\widetilde{L}_1\setminus \{u\}\subseteq V^{(2)}_1$.
Given that $|\widetilde{L}_1|<f$,
Claim \ref{claim5.7C} indicates that the vertices in $\widetilde{L}_1$ have at least $\frac{n}{2r^2}$ common neighbors in $V_2^{(2)}$.
Therefore, we can select an $|L_2|$-subset \(\widetilde{L}_2\subseteq V_2^{(2)}\) in which every vertex is adjacent to all vertices in $\widetilde{L}_1$.
By iteratively applying Claim \ref{claim5.7C} for $j\in \{3,\ldots,r\}$,
we conclude that the vertices in $\cup_{i\in [j\!-\!1]}\widetilde{L}_i$ have at least $\frac{n}{2r^2}$ common neighbors in $V_j^{(2)}$.
Hence, we can choose an $|L_j|$-subset $\widetilde{L}_j\subseteq V_j^{(2)}$
in which every vertex is adjacent to all vertices in $\cup_{i\in[j\!-\!1]}\widetilde{L}_i$.

Let $F^*$ be the subgraph of $H$ obtained from the complete $r$-partite graph $K_{\widetilde{L}_1,\ldots,\widetilde{L}_r}$ by adding the edge $uv$.
It is clear that
$$N_{F^*}(H,uv)\geq \binom{|V_1^{(2)}|\!-\!2}{|L_1|\!-\!2}\binom{{n}/{(2r^2)}}{|L_2|}\cdots \binom{{n}/{(2r^2)}}{|L_r|}.$$

Recall that $|V_1|\geq(\frac1r-3\varepsilon_{1})n$ and $|W_1\cup S^{(2)}|\leq2\varepsilon_{1}n.$
Thus, $|V_1^{(2)}|\ge|V_1|-|W_1\cup S^{(2)}|\geq\frac{n}{2r}$.
Moreover, since $\sum_{i\in [r]}|L_i|\!=\!f$,
it follows that $N_{F^*}(H,uv)=\Omega_r(n^{f-2})$.
Observe that $F^*$ contains copies of $F$ that share the same vertex partition.
This similarly implies $N_{F}(H,uv)=\Omega_r(n^{f-2})$.
By Lemma \ref{lem2.2C}, we know that $c(n, F)=\Theta(n^{f-2})$.
Therefore, there exists a constant $\gamma= \gamma_F>0$ such that $N_F(H,uv)\geq \gamma \cdot c(n,F)$, as required.
\end{proof}

\begin{claim}\label{claim5.9C}
We have $N_{F}(H)<(1-\varepsilon)c(n,F)$.
\end{claim}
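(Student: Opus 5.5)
This follows from the contradiction hypothesis \eqref{align-019F} once the right-hand side is rewritten in terms of $c(n,F)$, with $n=|H|$. First I bound $N_F(H)$ above by $N_F(G)$, since $H=G_{(\ell)}$ is a subgraph of $G$. Next I relate $2m/(1-\frac1r)$ to $n^2$ using Claim \ref{claim5.3C}. That claim gives
$$n\geq (1-\varepsilon_1)\sqrt{2h/(1-\tfrac1r)},$$
and Claim \ref{claim5.1C} gives $h> (1-\varepsilon_0)m$. Combining these,
$$\Big(2m\big/\big(1-\tfrac1r\big)\Big)^{\frac{f-2}{2}} \leq (1-\varepsilon_0)^{-\frac{f-2}{2}}(1-\varepsilon_1)^{-(f-2)}\,n^{f-2}.$$
Because $\varepsilon_0\ll\varepsilon_1\ll\varepsilon$ and $f$ is fixed, the prefactor is at most $1+\varepsilon/2$ (say). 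Substituting into \eqref{align-019F} then yields
$$N_F(H)<\alpha_F(1-2\varepsilon)(1+\tfrac{\varepsilon}{2})\,n^{f-2}\le \alpha_F\big(1-\tfrac{3}{2}\varepsilon\big)n^{f-2}.$$

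Second, I convert $\alpha_F n^{f-2}$ into $c(n,F)$ via Lemma \ref{lem2.2C}. That lemma gives
$$\alpha_F n^{f-2} < c(n,F)+\beta_F n^{f-3}.$$
Since $n\to\infty$ as $m\to\infty$ (Claim \ref{claim5.3C} gives $n=\Theta(\sqrt m)$), the error term satisfies $\beta_F n^{f-3}\le \frac{\varepsilon}{4}\cdot\frac12\alpha_F n^{f-2}<\frac{\varepsilon}{4}c(n,F)$, using the lower bound $c(n,F)>\frac12\alpha_F n^{f-2}$ from the same lemma. Hence
$$\alpha_F n^{f-2}\le (1+\tfrac{\varepsilon}{4})c(n,F),$$
and therefore
$$N_F(H)<(1-\tfrac32\varepsilon)(1+\tfrac{\varepsilon}{4})c(n,F)<(1-\varepsilon)c(n,F).$$

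The only step needing care is tracking the direction of each inequality. The lower bound on $n$ (not the upper bound) is what controls $m^{(f-2)/2}$ from above. Also, $m$ and $h$ differ by at most an $\varepsilon_0 m$ fraction, and this loss must be absorbed into the hierarchy \eqref{align-020F}. Everything else is routine arithmetic with $\varepsilon$ small and $m$ large.
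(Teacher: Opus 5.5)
Your proposal is correct and follows essentially the same route as the paper. You compare $2m/(1-\frac1r)$ with $n^2$ via Claims \ref{claim5.1C} and \ref{claim5.3C}, absorb the loss into the hierarchy to get $N_F(H)\le N_F(G)<(1-\frac32\varepsilon)\alpha_F n^{f-2}$, and then convert $\alpha_F n^{f-2}$ into $c(n,F)$ with Lemma \ref{lem2.2C}. The only differences are cosmetic: you use $h>(1-\varepsilon_0)m$ where the paper uses $m\le(1+\varepsilon_1)h$, and you spell out the error-term step that the paper compresses into $c(n,F)\ge(1-\frac12\varepsilon)\alpha_F n^{f-2}$.
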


\begin{proof}
By Claim \ref{claim5.1C} and the hierarchy established in \eqref{align-020F},
we have $m-h<\lfloor\varepsilon_{0} m\rfloor<\frac12\varepsilon_{1}m$, which implies that $m\leq (1+\varepsilon_{1})h$.
Moreover, Claim \ref{claim5.3C} indicates that $n^2\geq(1-\varepsilon_{1})^2\cdot2h/(1-\frac1r).$
Thus, we can deduce that
$$2m\big/\big(1\!-\!\frac1r\big)\leq (1+\varepsilon_{1})\cdot 2h\big/\big(1\!-\!\frac1r\big) \leq \big(1+4\varepsilon_{1}\big)n^2.$$
It follows that
\begin{align}\label{align-029G}
\Big(2m\big/\big(1\!-\!\frac1r\big)\Big)^{\frac{f-2}{2}}
\leq \big(1+4\varepsilon_{1}\big)^{\frac{f-2}{2}}n^{f-2}
\leq \big(1+\frac12\varepsilon\big)n^{f-2},
\end{align}
where the last inequality holds as $\varepsilon_{1}\ll \varepsilon$.
By Lemma \ref{lem2.2C}, $\alpha_F$ is precisely the coefficient of the leading item $n^{f-2}$ in $c(n, F)$.
Thus, we have $c(n,F)\geq (1-\frac12 \varepsilon)\alpha_F n^{f-2}$.
Combining this with inequalities \eqref{align-019F} and \eqref{align-029G}, we deduce that
\begin{align*}
N_F(G)
&<\alpha_F\bigl(1-2\varepsilon\bigr)\Big(2m\big/\big(1\!-\!\frac1r\big)\Big)^{\frac{f-2}{2}}\!\!\leq
\bigl(1-2\varepsilon\bigr)\big(1+\frac12\varepsilon\big)\alpha_F n^{f-2}\\
&\leq \bigl(1-\frac32\varepsilon\bigr)\frac{1}{(1-\frac12 \varepsilon)}c(n,F)\leq (1-\varepsilon)c(n,F),
\end{align*}
Hence, we obtain $N_{F}(H)\leq N_F(G)<(1-\varepsilon)c(n,F),$ as desired.
\end{proof}

Given that $W_i=\{u\in\! V_i: d_{V_i}(u)\geq4\varepsilon_{1}n\}$
and $S^{(2)}=\{u\in\! V(H): d_H(u)\leq(1\!-\!\frac{1}{r}\!-\!4\varepsilon_{2})n\}$.
The next step is to prove that both $\cup_{i\in[r]}W_i$ and  $S^{(2)}$ are empty sets.

\begin{claim}\label{claim5.10C}
We have $\cup_{i\in[r]}W_i\subseteq S^{(2)}\subseteq S^{(1)}$.
\end{claim}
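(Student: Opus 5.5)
The second inclusion $S^{(2)}\subseteq S^{(1)}$ follows directly from the definitions. By the hierarchy \eqref{align-020F} we have $\varepsilon_1\ll\varepsilon_2$, so $(1-\frac1r-4\varepsilon_2)n<(1-\frac1r-4\varepsilon_1)n$. Hence any vertex with $d_H(v)\le(1-\frac1r-4\varepsilon_2)n$ also satisfies the defining inequality of $S^{(1)}$.

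For the first inclusion I argue by contradiction: suppose some $u\in W_i\setminus S^{(2)}$ exists, for some $i\in[r]$. The plan is to produce linearly many (in $n$) class-edges at $u$, each lying in $\Omega(n^{f-2})$ copies of $F$. This will force $N_F(H)$ to exceed the bound of Claim \ref{claim5.9C}.

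\textbf{Step 1 (many good neighbours).} Since $u\in W_i$, we have $d_{V_i}(u)\ge 4\varepsilon_1 n$. By Claims \ref{claim5.5C} and \ref{claim5.6C}, $|W_i\cup S^{(2)}|\le 2\varepsilon_1 n$. Recalling $V_i^{(2)}=V_i\setminus(W_i\cup S^{(2)})$, the vertex $u$ has at least $4\varepsilon_1 n-2\varepsilon_1 n=2\varepsilon_1 n$ neighbours $v\in V_i^{(2)}$.

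\textbf{Step 2 (many copies per edge).} Each such edge $uv\in E(H[V_i])$ has $u\in V_i\setminus S^{(2)}$ and $v\in V_i^{(2)}$. These are exactly the hypotheses of Claim \ref{claim5.8C}, which gives $N_F(H,uv)\ge\gamma\, c(n,F)$ with $\gamma=\gamma_F>0$ independent of $\varepsilon_1$.

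\textbf{Step 3 (summing without double counting).} By definition, $N_F(H,e)$ counts copies of $F$ that contain exactly one edge of $\bigcup_{j}E(H[V_j])$, namely $e$. So the families counted for distinct class-edges $uv$ are pairwise disjoint. Summing over the $2\varepsilon_1 n$ edges found in Step 1 gives
$$N_F(H)\ \ge\ 2\varepsilon_1 n\cdot\gamma\, c(n,F)\ >\ (1-\varepsilon)\,c(n,F)$$
for $n$ large. Here $n$ is large because $n=\Theta(\sqrt m)$ by Claim \ref{claim5.3C}, while $\varepsilon_1,\gamma$ are fixed constants. This contradicts Claim \ref{claim5.9C}, so $W_i\setminus S^{(2)}=\emptyset$ for every $i$, i.e.\ $\bigcup_{i}W_i\subseteq S^{(2)}$.

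The only delicate point is the disjointness in Step 3, which rests entirely on the ``only one class-edge'' definition of $N_F(H,e)$. The substantive work, the embedding that gives the $\Omega(n^{f-2})$ lower bound, is already contained in Claim \ref{claim5.8C}.
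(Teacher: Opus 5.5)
Your proof is correct and follows the paper's argument essentially verbatim. You get $S^{(2)}\subseteq S^{(1)}$ from the definitions; then, for $u\in W_i\setminus S^{(2)}$, you find at least $2\varepsilon_1 n$ neighbours in $V_i^{(2)}$, apply Claim~\ref{claim5.8C} to each such class-edge, and contradict Claim~\ref{claim5.9C}. The only cosmetic differences are that the paper uses just $\lceil 1/\gamma\rceil$ of these neighbours (already enough for $N_F(H)\ge c(n,F)$) while you sum over all of them, and that you state explicitly the disjointness of the families counted by $N_F(H,uv)$, which the paper leaves implicit.
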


\begin{proof}
By the definitions of $S^{(1)}$ and $S^{(2)}$, we immediately obtain $S^{(2)}\subseteq S^{(1)}$.
It suffices to prove that $\cup_{i\in[r]}W_i\subseteq S^{(2)}$.
Suppose, for the sake of contradiction, that there exists some $i\in [r]$ such that $W_{i}\setminus S^{(2)}\neq \varnothing$.
Choose a vertex $u\in W_{i}\setminus S^{(2)}$.
Based on the definition of $W_i$, $u$ has at least $4\varepsilon_{1} n$ neighbors in $V_{i}$.
Since $V_i^{(2)}=V_i\setminus (W_i\cup S^{(2)})$, and given that $|W_i\cup S^{(2)}|\leq 2\varepsilon_{1} n$ by Claims \ref{claim5.5C} and \ref{claim5.6C},
it follows that $u$ has at least $2\varepsilon_{1} n$ neighbors in $V_{i}^{(2)}$.

Now, set $\gamma^*=\lceil1/\gamma\rceil$, where $\gamma$ is a constant defined in Claim \ref{claim5.8C}.
Since $\gamma^*<2\varepsilon_{1} n$, we can find $\gamma^*$ neighbors of $u$ from $V_{i}^{(2)}$, denoted as $v_1,\dots,v_{\gamma^*}$.
Note that $u\in W_{i}\setminus S^{(2)}\subseteq V_{i}\setminus S^{(2)}$. By Claim \ref{claim5.8C}, we have
\begin{align*}
N_F(H)
\geq \sum_{j\in[\gamma^*]}\!\!N_F(H,uv_j)\geq \gamma^* \gamma \cdot c(n,F)\geq  c(n,F),
\end{align*}
which contradicts Claim \ref{claim5.9C}.
Therefore, we conclude that $\cup_{i\in[r]}W_i\subseteq S^{(2)}$.
\end{proof}

\begin{claim}\label{claim5.11C}
Let $\bm{x}=(x_v)_{v\in V(H)}$ denote a non-negative unit eigenvector corresponding to $\rho(H)$,
with $x_{u^*}=\max_{v\in V(H)}x_v$.
Then, we have $x_u\geq (1\!-\!16f^2\varepsilon_{1})x_{u^*}$ for any $u\in \cup_{i\in [r]}V_i^{(1)}$.
\end{claim}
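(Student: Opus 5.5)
We will compare $\rho(H)x_u$ with $\rho(H)x_{u^*}$ through the eigen-equations. The key structural input is Claim \ref{claim5.7C}(i): vertices in $\bigcup_i V_i^{(1)}$ have large common neighbourhoods in every other class. For $u\in V_{i_0}^{(1)}$ we write $\rho(H)x_u=\sum_{w\in N_H(u)}x_w$ and try to bound this from below by a quantity close to $\rho(H)x_{u^*}$.

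\textbf{Step 1 (upper bound at $u^*$).} We bound
\[
\rho(H)x_{u^*}=\sum_{w\in N_H(u^*)}x_w\le d_H(u^*)\,x_{u^*}.
\]
The crude estimate $d_H(u^*)\le n$ is too weak. Instead we use the two-step expansion
\[
\rho^2(H)x_{u^*}=\sum_{w}\bigl|N(u^*)\cap N(w)\bigr|\,x_w.
\]
Each $|N(u^*)\cap N(w)|$ is controlled using the near-complete-$r$-partite structure: $\sum_i e(H[V_i])\le\varepsilon_1^2n^2$ from Claim \ref{claim5.4C}, and the class sizes satisfy $|V_i|\approx n/r$. Since $W_i\subseteq S^{(2)}$ by Claim \ref{claim5.10C} and $|S^{(1)}|\le\varepsilon_1 n$ by Claim \ref{claim5.5C}, all but at most $\varepsilon_1 n$ vertices have in-class degree below $4\varepsilon_1 n$. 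Hence for most $w$ the common neighbourhood $|N(u^*)\cap N(w)|$ is at most about $(1-\frac2r)n+O(\varepsilon_1 n)$ when $w$ lies in a different class from $u^*$, and at most about $(1-\frac1r)n$ otherwise.

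\textbf{Step 2 (lower bound at $u$).} Here $u\in V_{i_0}^{(1)}$. We compute
\[
\rho^2(H)x_u=\sum_{w}\bigl|N(u)\cap N(w)\bigr|\,x_w.
\]
Since $d_H(u)>(1-\frac1r-4\varepsilon_1)n$ and $d_{V_{i_0}}(u)<4\varepsilon_1 n$, the vertex $u$ is adjacent to all but $O(\varepsilon_1 n)$ vertices outside $V_{i_0}$. For $w\in V_j^{(1)}$, Claim \ref{claim5.7C}(i) (with $k=1$, and $\varepsilon_1$ in place of $\varepsilon_2$) gives common neighbourhoods at least $(1-\frac2r)n-O(f^2\varepsilon_1 n)$ when $j\ne i_0$, and at least $(1-\frac1r)n-O(f^2\varepsilon_1n)$ when $j=i_0$. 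These match the Step 1 upper bounds term by term, up to $O(f^2\varepsilon_1 n)$. The exceptional vertices $w\in S^{(1)}\cup\bigcup_iW_i$ number at most $2\varepsilon_1 n$, and each contributes at most $n\,x_{u^*}$.

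\textbf{Step 3 (subtract and close).} Subtracting the two expansions gives
\[
\rho^2(H)(x_{u^*}-x_u)\le O(f^2\varepsilon_1 n)\sum_w x_w+O(\varepsilon_1 n^2)\,x_{u^*}.
\]
We bound $\sum_w x_w\le n\,x_{u^*}$, and use $\rho^2(H)\ge(1-\frac1r)2h\ge(1-\frac1r)^2n^2(1-O(\varepsilon_1))$, which follows from Claim \ref{claim5.2C} and Claim \ref{claim5.3C}. This yields $x_{u^*}-x_u\le C f^2\varepsilon_1 x_{u^*}$, and we then track the constants so that $C\le16$; if the bookkeeping gets tight, the slack $r\ge3$ and $f\ge r+1$ should absorb it.

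\textbf{Main obstacle.} The difficulty is Step 1 when $u^*$ itself is an exceptional vertex, i.e.\ $u^*\in S^{(1)}$ or $u^*$ has large in-class degree. Then $u^*$ does not fit the generic picture, and the common-neighbourhood bounds of Step 1 may fail for it. The plan here is to avoid exact matching and use only the upper bound
\[
|N(u^*)\cap N(w)|\le d_H(w)\le\bigl(1-\tfrac1r\bigr)n+O(\varepsilon_1 n)
\]
for typical $w$, after first showing that the maximum degree is at most $(1-\frac1r+O(\varepsilon_1))n$ via the max-cut property of the partition. We then compare against the lower bound for $u$ by splitting the sum over $w$ into $w\in V_{i_0}$ and $w\notin V_{i_0}$. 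A cleaner alternative, to be tried first, is to argue directly with the single-step equation
\[
\rho(H)x_u\ge\sum_{j\ne i_0}\sum_{w\in N(u)\cap V_j}x_w,
\]
together with a separate estimate showing that the $x$-mass $\sum_{w\in V_j}x_w$ of each class is roughly $\frac1r\sum_w x_w$.
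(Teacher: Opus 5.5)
\textbf{Gap: the cross-part case.} Your Steps 1 and 2 match ``term by term'' only when $u$ and $u^*$ lie in the same part. Suppose $u^*\in V_1$ and $u\in V_2^{(1)}$. For typical $w\in V_1$, your upper bound on $|N(u^*)\cap N(w)|$ is about $(1-\frac1r)n$. Your lower bound on $|N(u)\cap N(w)|$ is only about $(1-\frac2r)n$, since the common neighbours of $u$ and $w$ lie in $V_3\cup\dots\cup V_r$. For $w\in V_2$ the roles are reversed.

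Write $X_j=\sum_{w\in V_j}x_w$. Then the best your two expansions give is $\rho^2(H)(x_{u^*}-x_u)\le\frac nr(X_1-X_2)+O(f^2\varepsilon_1n^2)x_{u^*}$. Nothing in your argument shows $X_1-X_2=O(\varepsilon_1 n)x_{u^*}$; a priori it can be of order $\frac nr x_{u^*}$. So the inequality in Step 3 is not established for cross-part pairs, and that is exactly the nontrivial half of the claim. The ``separate estimate'' that each part carries $x$-mass about $\frac1r\sum_wx_w$, which you list only as something to try, is the missing ingredient. It is no easier than the claim itself.

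The obstacle you did identify is not real. The inequality $\rho(H)x_{u^*}\le d_H(u^*)x_{u^*}$ gives $d_H(u^*)\ge\rho(H)\ge(1-\frac1r-\varepsilon_1)n$, so $u^*\notin S^{(1)}$. Claim~\ref{claim5.10C} ($W_i\subseteq S^{(1)}$) then places $u^*$ in some $V_i^{(1)}$. Your planned max-degree bound via the max-cut property would not have worked anyway: max-cut only gives $d_{V_i}(v)\le d_H(v)/r$, which caps $d_H(v)$ at roughly $n$.

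\textbf{How the paper closes it.} The paper uses a short contradiction that exploits how sharp the lower bound on $\rho(H)$ is. Within a part, the one-step equation suffices. For $u_1,u_2\in V_i^{(1)}$,
\[
\rho(H)(x_{u_1}-x_{u_2})\le\sum_{v\in N(u_1)\setminus N(u_2)}x_v\le4f^2\varepsilon_1n\,x_{u^*},
\]
so $|x_{u_1}-x_{u_2}|\le8f^2\varepsilon_1x_{u^*}$; no two-step expansion is needed.

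Now take $u^*\in V_1^{(1)}$ and suppose some $u_0\in V_2^{(1)}$ had $x_{u_0}<(1-16f^2\varepsilon_1)x_{u^*}$. Then every vertex of $V_2^{(1)}$ has $x_v\le(1-8f^2\varepsilon_1)x_{u^*}$. Split $\rho(H)x_{u^*}=\sum_{v\in N(u^*)}x_v$ over the parts:
\begin{itemize}
\item $V_1$ contributes at most $4\varepsilon_1nx_{u^*}$;
\item each $V_j$ with $j\ge3$ contributes at most $(\frac1r+3\varepsilon_1)nx_{u^*}$;
\item $V_2$ contributes the depleted sum.
\end{itemize}
This gives $\rho(H)\le(1-\frac1r-4r\varepsilon_1)n$, contradicting $\rho(H)\ge(1-\frac1r-\varepsilon_1)n$. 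A whole part cannot be uniformly depleted without pulling $\rho(H)$ below its known lower bound, and this is the idea your proposal is missing.
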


\begin{proof}
Given that $u\in V_i^{(1)}$.
Since $V^{(1)}_{i}=V_{i}\setminus (W_{i}\cup S^{(1)})$, by the definitions of $W_{i}$ and $S^{(1)}$,
we have $d_{V_{i}}(u)<4\varepsilon_1n$ and $d_H(u)>\big(1-\frac{1}{r}-4\varepsilon_1\big)n$.
Also, we know that $d_{V_i}(u)\leq |V_{i}|\leq(\frac1r+3\varepsilon_1)n$ for $i\in [r]$.
Thus, for any $j\neq i$, we obtain
\begin{align}\label{align-029F}
d_{V_{j}}(u)
&=d_H(u)\!-\!d_{V_{i}}(u)\!-\!\!\!\sum_{s\in [r]\setminus\{i,j\}}\!\!\!d_{V_{s}}(u)
>\big(1\!-\!\frac1r\!-\!8\varepsilon_1\big)n\!-\!\big(r\!-\!2\big)\big(\frac{1}{r}\!+\!3\varepsilon_1\big)n\nonumber\\
&=\big(\frac{1}{r}\!-\!(3r\!+\!2)\varepsilon_1\big)n\geq\big(\frac{1}{r}\!-\!3f\varepsilon_1\big)n.
\end{align}
Recall that $|V_j|\leq(\frac1r+3\varepsilon_{1})n$ for any $j\in [r]$.
Combining this with \eqref{align-029F}, we obtain that
\begin{align}\label{align-030F}
\sum_{j\in [r]\setminus \{i\}}\!\!\Big|\overline{N}_{V_{j}}(u)\Big|
=\!\!\sum_{j\in [r]\setminus \{i\}}\!\!\Big(|V_j|\!-\!d_{V_j}(u)\Big)\leq 3(f\!+\!1)(r\!-\!1)\varepsilon_{1} n
<4(f^2\!-\!1)\varepsilon_{1} n,
\end{align}
where $\overline{N}_{V_{j}}(u):=V_j\setminus N_{V_j}(u)$.

Now, choose two vertices $u_1,u_2\in V_i^{(1)}$. We may assume that $x_{u_1}\geq x_{u_2}.$
Since $u_1\in V_i\setminus W_i$, we have $d_{V_i}(u_1)<4\varepsilon_{1} n$.
Note that $N_{V_i}(u_1)\setminus N_{V_i}(u_2)\subseteq N_{V_i}(u_1)$
and $N_{V_j}(u_1)\setminus N_{V_j}(u_2)\subseteq \overline{N}_{V_{j}}(u_2)$
for any $j\in [r]\setminus \{i\}$.
Combining this with \eqref{align-030F}, we can deduce that
\begin{align*}
\rho(H) (x_{u_1}\!-\!x_{u_2})
&\leq\!\!\!\sum_{v\in N_H(u_1)\setminus N_H(u_2)}\!\!\!x_v
\leq \Big(d_{V_i}(u_1)+\!\!\!\sum_{j\in [r]\setminus \{i\}}\!\!\!\big|\overline{N}_{V_{j}}(u_2)\big|\Big)x_{u^*}
\leq 4f^2\varepsilon_{1} nx_{u^*}.
\end{align*}

From Claim \ref{claim5.3C},
we know that
$\sqrt{2h/(1-\frac1r)}\geq n\big/(1+\varepsilon_{1})\geq(1-\varepsilon_{1})n$.
Combining this with Claim \ref{claim5.2C} yields that
\begin{align}\label{align-032F}
\rho(H)\geq \sqrt{\big(1\!-\!\frac{1}{r}\big)2h}\geq \big(1\!-\!\frac{1}{r}\big)\big(1\!-\!\varepsilon_{1}\big)n
\geq\big(1\!-\!\frac{1}{r}\!-\!\varepsilon_{1}\big)n.
\end{align}
This simplifies to $\rho(H)>\frac12n$.
Recall that $\rho(H) (x_{u_1}-x_{u_2})\leq 4f^2\varepsilon_{1} nx_{u^*}$. Thus, we have
\begin{align}\label{align-031F}
x_{u_1}-x_{u_2}\leq 8f^2\varepsilon_{1} x_{u^*}.
\end{align}
Note that $\rho(H) x_{u^*}=\sum_{u\in N_H(u^*)}x_u\leq d_H(u^*)x_{u^*}$.
Combining this with \eqref{align-032F} gives that $d_H(u^*)\geq \rho(H)\geq (1-\frac{1}{r}-\varepsilon_{1})n.$
Hence, $u^*\notin S^{(1)}$.
By Claim \ref{claim5.10C}, we further conclude that $u^*\in \cup_{i\in [r]}V_i^{(1)}$.
Assume without loss of generality that $u^*\in V_1^{(1)}$.
In light of \eqref{align-031F}, we have $x_{u^*}-x_u\leq8f^2\varepsilon_{1} x_{u^*}$,
and so $x_u\geq (1-8f^2\varepsilon_{1})x_{u^*}$, for any $u\in V_1^{(1)}$.

It remains to prove that $x_u\geq (1-16f^2 \varepsilon_{1})x_{u^*}$ for any $u\in \cup_{i=2}^{r}V_i^{(1)}$.
Suppose to the contrary that there exists a vertex $u_0\in \cup_{i=2}^{r}V_i^{(1)}$ with $x_{u_0}<(1-16f^2\varepsilon_{1})x_{u^*}$.
Without loss of generality, assume that $u_0\in V_2^{(1)}$.
From \eqref{align-031F}, we know that $x_u-x_{u_0}\leq 8f^2\varepsilon_{1} x_{u^*}$ for $u\in V_2^{(1)}$ with $x_u\geq x_{u_0}$.
This further implies that $x_u\leq (1-8f^2\varepsilon_{1})x_{u^*}$ for all $u\in V_2^{(1)}$.
By Claims \ref{claim5.4C}, \ref{claim5.5C}, and \ref{claim5.6C},
we have $|V_i|\leq(\frac1r+3\varepsilon_{1})n$ and $|W_i\cup S^{(1)}|\leq2\varepsilon_{1}n$ for each $i\in[r]$.
Recall that $V_2^{(1)}=V_2\setminus (W_2\cup S^{(1)})$. It follows that
\begin{align*}
\sum_{v\in N_{V_2}(u^*)}x_v
&\leq\sum_{v\in V_2^{(1)}}x_v\!+\!\!\sum_{v\in V_2\setminus V_2^{(1)}}\!\!x_v
\leq\Big(\big(1-8f^2\varepsilon_{1}\big)\big|V_2^{(1)}\big|+\big|W_2\cup S^{(1)}\big|\Big)x_{u^*}\\
&\leq\Big(\big(1-8r^2\varepsilon_{1}\big)\big(\frac{1}{r}\!+\!3\varepsilon_{1}\big)n+2\varepsilon_{1}n\Big)x_{u^*}.
\end{align*}
Since $u^*\in V_1^{(1)}\subseteq V_1\setminus W_1$, we have
$d_{V_1}(u^*)<4\varepsilon_{1} n$ and $\sum_{v\in N_{V_1}(u^*)}x_v\leq d_{V_1}(u^*)x_{u^*}\le4\varepsilon_{1} nx_{u^*}.$
Moreover, for every $i\in [r]\setminus\{1,2\}$, we have
$\sum_{v\in N_{V_i}(u^*)}x_v\leq|V_i|x_{u^*}\le (\frac1r+3\varepsilon_{1})nx_{u^*}.$
Combining the above inequalities derived from three cases, we deduce that
$$\rho(H)x_{u^*}=\sum_{i\in[r]}\sum_{v\in N_{V_i}(u^*)}x_v
\leq \Big(\big(r-1-8r^2\varepsilon_{1}\big)\big(\frac{1}{r}\!+\!3\varepsilon_{1}\big)n+6\varepsilon_{1}n\Big)x_{u^*}.$$
This simplifies to $\rho(H)\leq (1-\frac1r-4r\varepsilon_{1})n$,
contradicting \eqref{align-032F}.
Thus, the proof is complete.
\end{proof}

\begin{claim}\label{claim5.12C}
We have $2hx_u^2\leq(1-2\varepsilon_{2})d_H(u)$ for any $u\in S^{(2)}$.
\end{claim}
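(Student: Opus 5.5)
The plan is to bound $x_u$ directly through the eigen-equation at $u$, using Claim \ref{claim5.11C} to control $x_{u^*}$. No structural argument is needed: the factor $1-2\varepsilon_2$ comes from the degree deficit $4\varepsilon_2 n$ that defines $S^{(2)}$. The other error terms are $O(f^2\varepsilon_1)$, and they are absorbed because $\varepsilon_1\ll\varepsilon_2$.

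\emph{Step 1: bound $x_u$.} Fix $u\in S^{(2)}$ and write $d=d_H(u)$, so $d\le(1-\frac1r-4\varepsilon_2)n$. From $\rho(H)x_u=\sum_{v\in N_H(u)}x_v\le d\,x_{u^*}$ we get
$$2hx_u^2\le \frac{2h\,d^2x_{u^*}^2}{\rho^2(H)}\le \frac{d^2x_{u^*}^2}{1-\frac1r},$$
where the second inequality uses $\rho^2(H)\ge(1-\frac1r)2h$ from Claim \ref{claim5.2C}.

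\emph{Step 2: bound $x_{u^*}^2$ from above.} By Claims \ref{claim5.5C} and \ref{claim5.6C}, $|S^{(1)}\cup\bigcup_i W_i|\le 2\varepsilon_1 n$. Hence $|\bigcup_{i\in[r]}V_i^{(1)}|\ge(1-2\varepsilon_1)n$. By Claim \ref{claim5.11C}, every vertex of $\bigcup_{i}V_i^{(1)}$ satisfies $x_v\ge(1-16f^2\varepsilon_1)x_{u^*}$. Normalization then gives
$$1=\sum_v x_v^2\ge (1-2\varepsilon_1)(1-16f^2\varepsilon_1)^2 n\,x_{u^*}^2,$$
so $x_{u^*}^2\le(1+40f^2\varepsilon_1)/n$, say.

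\emph{Step 3: combine.} Substituting Step 2 into Step 1,
$$2hx_u^2\le d\cdot\frac{d}{(1-\frac1r)n}(1+40f^2\varepsilon_1)\le d\Big(1-\frac{4\varepsilon_2}{1-\frac1r}\Big)(1+40f^2\varepsilon_1)\le d(1-4\varepsilon_2)(1+40f^2\varepsilon_1).$$
Since $\varepsilon_1\ll\varepsilon_2$ by \eqref{align-020F}, the last factor is at most $1-2\varepsilon_2$, which is the claimed bound.

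The main point needing care is Step 2, the uniform upper bound $x_{u^*}^2\lesssim 1/n$. It rests entirely on Claim \ref{claim5.11C} together with the size bounds on the exceptional sets. The constant bookkeeping only has to be good enough that the $O(f^2\varepsilon_1)$ loss is smaller than the $2\varepsilon_2$ gain. Everything else is routine.
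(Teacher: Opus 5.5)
Your proof is correct and follows essentially the paper's argument: the eigen-equation at $u$ gives $x_u\le d_H(u)x_{u^*}/\rho(H)$, Claim \ref{claim5.11C} with normalization bounds $x_{u^*}^2$ from above, and the degree deficit $4\varepsilon_2 n$ defining $S^{(2)}$ absorbs the $O(f^2\varepsilon_1)$ losses. The only difference is bookkeeping: you normalize by $n$ rather than by $\psi=\sqrt{2h/(1-\frac1r)}$, which avoids the $n$-versus-$\psi$ conversion via Claim \ref{claim5.3C} and lets Claims \ref{claim5.5C}--\ref{claim5.6C} stand in for Claim \ref{claim5.10C} when bounding $|\bigcup_i V_i^{(1)}|$ from below.
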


\begin{proof}
Define $\psi:=\sqrt{2h/(1\!-\!\frac1r)}$.
From Claim \ref{claim5.3C}, we know that $(1\!-\!\varepsilon_{1})\psi\leq n\leq (1\!+\!\varepsilon_{1})\psi$.
Furthermore, by Claims \ref{claim5.10C} and \ref{claim5.5C},
we have $\sum_{i\in[r]}|V_i^{(1)}|=n\!-\!|S^{(1)}|\geq (1\!-\!\varepsilon_{1})n\geq(1\!-\!\varepsilon_{1})^2\psi.$
This, together with Claim \ref{claim5.11C}, gives that
$$1=\!\sum_{v\in V(H)}\!\!x_v^2 \geq
\sum_{i\in[r]}\sum_{v\in V_i^{(1)}}\!x_v^2
\geq\big(1\!-\!\varepsilon_{1}\big)^2\psi\big(1\!-\!16f^2\varepsilon_{1}\big)^2x^2_{u^*}
\geq\big(1\!-\!\frac12\varepsilon_{2}\big)\psi x_{u^*}^2,$$
where the last inequality holds as $\varepsilon_{1}\ll \varepsilon_{2}$.
This can be further simplified to
\begin{align}\label{align-031J}
\psi x_{u^*}^2\leq\frac{1}{1\!-\!\frac12\varepsilon_{2}}\leq 1+\varepsilon_{2}.
\end{align}
By Claim \ref{claim5.2C}, we have $\rho(H)\geq\sqrt{(1\!-\!\frac{1}{r})2h}=(1\!-\!\frac1r)\psi.$
Combining this with \eqref{align-031J} yields that
\begin{align}\label{align-031K}
\big(1\!-\!\frac1r\big)\psi x_u\leq \rho(H)x_u=\!\sum_{v\in N_H(u)}\!\!x_v\leq d_H(u)x_{u^*}
\leq d_H(u)\sqrt{\big(1\!+\!\varepsilon_{2}\big)/\psi}.
\end{align}
Recall that $n\leq (1\!+\!\varepsilon_{1})\psi$. Based on the definition of $S^{(2)}$, we derive that
\begin{align}\label{align-031L}
d_H(u)\leq \big(1\!-\!\frac{1}{r}\!-\!4\varepsilon_{2}\big)n\leq \big(1\!-\!\frac{1}{r}\!-\!4\varepsilon_{2}\big)\big(1\!+\!\varepsilon_{1}\big)\psi
\leq \big(1\!-\!\frac{1}{r}\!-\!3\varepsilon_{2}\big)\psi.
\end{align}
Note that $2h=(1\!-\!\frac1r)\psi^2.$
Combining \eqref{align-031K} and \eqref{align-031L}, we can deduce that
$$\frac{2hx_u^2}{d_H(u)}=\big(1\!-\!\frac1r\big)\frac{\psi^2x_u^2}{d_H(u)}
\leq d_H(u)\frac{1\!+\!\varepsilon_{2}}{\psi(1\!-\!\frac1r)}
\leq \frac{(1\!+\!\varepsilon_{2})(1\!-\!\frac1r\!-\!3\varepsilon_{2})}{1\!-\!\frac1r}
\leq(1\!+\!\varepsilon_{2})(1\!-\!3\varepsilon_{2}).$$
This can be further simplified to
$2hx_u^2\leq (1-2\varepsilon_{2})d_H(u)$,
thus proving the claim.
\end{proof}

\begin{claim}\label{claim5.13C}
We have $S^{(2)}=\varnothing$.
\end{claim}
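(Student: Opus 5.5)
Suppose, for contradiction, that $S^{(2)}\neq\varnothing$ and pick $u\in S^{(2)}$. The idea is that deleting all edges at $u$ yields a subgraph that qualifies as an $\varepsilon_0$-dense subgraph of $H=G_{(\ell)}$. That is impossible, because the sequence stopped at $G_{(\ell)}$ without exhausting its edge budget: Claim \ref{claim5.1C} gives $e(G_{(1)})-e(G_{(\ell)})<\lfloor\varepsilon_0 m\rfloor$, so termination means $H$ has no $\varepsilon_0$-dense subgraph.

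Let $H'$ be the subgraph induced by $E(H)\setminus E_H(u)$ and write $d=d_H(u)$. First, the size condition in Definition \ref{def5.1} holds: $d\geq 1$ since $H$ has no isolated vertices, and
\[
d\leq\big(1-\tfrac1r-4\varepsilon_2\big)n\leq \sqrt{2h/(1-\tfrac1r)}
\]
by Claim \ref{claim5.3C}.

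Next we estimate $\rho(H')$ with the Rayleigh quotient. Let $\bm{x}$ be the unit Perron vector of $H$ and take the test vector $\bm{x}$ restricted to $V(H)\setminus\{u\}$. Since $\rho(H)x_u=\sum_{v\in N_H(u)}x_v$, we have
\[
\bm{x}^\top A(H')\bm{x}=\rho(H)-2\rho(H)x_u^2 ,
\]
and the test vector has squared norm $1-x_u^2$. Hence
\[
\rho(H')\geq \rho(H)\,\frac{1-2x_u^2}{1-x_u^2}\geq \rho(H)\big(1-x_u^2\big)
\]
up to a negligible $O(x_u^4)$ term. Claim \ref{claim5.12C} gives $x_u^2\leq(1-2\varepsilon_2)d/(2h)$, so
\[
\rho(H')\geq \rho(H)\Big(1-\frac{(1-2\varepsilon_2)d}{2h}\Big).
\]
We compare this with the target $\Phi(H')\geq\Phi(H)(1+\varepsilon_0 d/h)$, which amounts to
\[
\rho(H')\geq \rho(H)\sqrt{\tfrac{h-d}{h}}\,\Big(1+\frac{\varepsilon_0 d}{h}\Big).
\]
Since $d/h=O(h^{-1/2})$, the right-hand side expands as $\rho(H)\big(1-\frac{d}{2h}+\frac{\varepsilon_0 d}{h}+O(d^2/h^2)\big)$.

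The margin is therefore $\varepsilon_2 d/h-\varepsilon_0 d/h-O(d^2/h^2-x_u^4)$. Because $\varepsilon_0\ll\varepsilon_2$, and because $d^2/h^2=O(d/h\cdot h^{-1/2})$ and $x_u^4=O(h^{-1})\cdot d/h$ are lower order, the margin is positive for large $m$. Here $d\geq 1$ ensures $d/h$ dominates $h^{-1}$, and $x_u^2\leq(1+\varepsilon_2)/\psi$ by \eqref{align-031J} keeps the $x_u^4$ term under control. Thus $H'$ is an $\varepsilon_0$-dense subgraph of $H$, a contradiction, so $S^{(2)}=\varnothing$.

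The main obstacle is the bookkeeping of the second-order error terms, in particular the exact Rayleigh bound $(1-2x_u^2)/(1-x_u^2)$ and the square-root expansion. We must check that they are $o(d/h)$ uniformly for every $d\geq1$. The crude bounds $d\leq n=O(\sqrt h)$ and $x_u^2=O(h^{-1/2})$ suffice for this.
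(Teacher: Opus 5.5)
Your proposal is correct and is essentially the paper's argument: you delete all edges at $u\in S^{(2)}$, use the restricted Perron vector to get $(1-x_u^2)\rho(H')\ge(1-2x_u^2)\rho(H)$, and combine this with Claim~\ref{claim5.12C} to show $H'$ is $\varepsilon_0$-dense, which contradicts the stopping rule. The paper simply replaces your asymptotic bookkeeping with the exact bounds $\frac{x_u^2}{1-x_u^2}\le(1+\varepsilon_2)x_u^2$ and $\sqrt{h}-\sqrt{h-d}\ge\frac{d}{2\sqrt h}$, and it checks the additive condition of Definition~\ref{def5.1} directly; note that your multiplicative target $\Phi(H')\ge\Phi(H)(1+\varepsilon_0 d/h)$ is sufficient rather than equivalent (this needs $\Phi(H)>1$), and that $x_u^4$ is only $O(h^{-1/2})\cdot d/h$, not $O(h^{-1})\cdot d/h$, though this is still $o(d/h)$ and suffices.
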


\begin{proof}
Suppose, for the sake of contradiction, that there exists a vertex $u_0\in S^{(2)}$.
Recall that $\frac12m<h\leq m$ and that $\psi^2=2h/(1\!-\!\frac1r)$.
Then, we have $\psi=\Theta(\sqrt{m})$, which is sufficiently large.
Since $\varepsilon_{2}$ is a constant with $0<\varepsilon_{2}\ll1$,
based on \eqref{align-031J}, we obtain
$x^2_{u_0}\leq x^2_{u^*}\leq \frac{1}{\psi}(1+\varepsilon_{2})\leq \frac12\varepsilon_{2}.$
This further implies that $1-x^2_{u_0}\geq1-\frac12\varepsilon_{2}\geq 1\big/(1+\varepsilon_{2}).$

Now, let $H_0$ be the subgraph of $H$ induced by the edges within $V(H)\setminus\{u_0\}$.
Consider the vector $\bm{x}'=\bm{x}|_{V(H_0)}$, which is the restriction of $\bm{x}$ to $V(H_0)$.
It is easy to see that
$$\rho(H_0)\geq \frac{{\bm{x}'}^TA(H_0)\bm{x}'}{{\bm{x}'}^T\bm{x}'}=\frac{\sum_{uv\in E(H_0)}2x_ux_v}{1-x_{u_0}^2}.$$
Combining this with $\sum_{u\in N_H(u_0)}x_u=\rho(H)x_{u_0}$, we can deduce that
\begin{align*}
\rho(H)=\bm{x}^TA(H)\bm{x}=\!\!\!\sum_{uv\in E(H_0)}\!\!\!\!2x_ux_v+2x_{u_0}\!\!\!\!\sum_{u\in N_H(u_0)}\!\!\!\!x_u
\leq\rho(H_0)(1\!-\!x_{u_0}^2)\!+\!2\rho(H)x_{u_0}^2.
\end{align*}
Rearranging gives $(1-x^2_{u_0})\rho(H_0)\geq (1-2x^2_{u_0})\rho(H)$.
This, together with $1-x^2_{u_0}\geq \frac1{1+\varepsilon_{2}}$, yields
$$\rho(H_0)\geq\rho(H)\Big(1\!-\!\frac{x^2_{u_0}}{1\!-\!x^2_{u_0}}\Big)
\geq\rho(H)\Big(1\!-\!\big(1\!+\!\varepsilon_{2}\big)x^2_{u_0}\Big).$$
Recall that $\Phi(H)=\rho(H)/\sqrt{e(H)}=\rho(H)/\sqrt{h}$. Thus, we have
\begin{align}\label{align-033F}
\Phi(H_0)\!-\!\Phi(H)=\frac{\rho(H_0)\sqrt{h}\!-\!\rho(H)\sqrt{e(H_0)}}{\sqrt{h}\sqrt{e(H_0)}}
\geq \frac{\rho(H)\Big(\big(1\!-\!(1\!+\!\varepsilon_{2})x^2_{u_0}\big)\sqrt{h}\!-\!\sqrt{e(H_0)}\Big)}
{\sqrt{h}\sqrt{e(H_0)}}.
\end{align}
Observe that $\sqrt{h}\!-\!\sqrt{e(H_0)}=\frac{d_H(u_0)}{\sqrt{h}+\sqrt{e(H_0)}}
\geq \frac{d_H(u_0)}{2\sqrt{h}}$, and by Claim \ref{claim5.12C}, we have
$(1+\varepsilon_{2})\frac{2hx_{u_0}^2}{d_H(u_0)}
\leq (1+\varepsilon_{2})(1-2\varepsilon_{2})\leq 1-\varepsilon_{2}.$
It follows that
\begin{align*}
\big(1\!-\!(1\!+\!\varepsilon_{2})x^2_{u_0}\big)\sqrt{h}\!-\!\sqrt{e(H_0)}
\geq\frac{d_H(u_0)}{2\sqrt{h}}\Big(1\!-\!(1+\varepsilon_{2})\frac{2hx_{u_0}^2}{d_H(u_0)}\Big)
\geq\frac{\varepsilon_{2}d_H(u_0)}{2\sqrt{h}}.
\end{align*}

From Claim \ref{claim5.2C},
we know that $\rho(H)\geq\sqrt{(1\!-\!\frac1r)2h}\geq\sqrt{h}\geq\sqrt{e(H_0)}$.
Substituting the above two inequalities into \eqref{align-033F},
we derive that
$$\Phi(H_0)\!-\!\Phi(H)\geq\frac{\varepsilon_{2}d_H(u_0)}{2h}\geq\frac{\varepsilon_{0}d_H(u_0)}{h}.$$
Moreover, since $u_0\in S^{(2)}$, it follows from \eqref{align-031L} that
$$e(H)\!-\!e(H_0)
=d_H(u_0)\leq\big(1\!-\!\frac{1}{r}\!-\!3\varepsilon_{2}\big)\psi\leq\psi=\sqrt{{2h}\big/{(1\!-\!\frac{1}{r})}}.$$
By Definition \ref{def5.1}, we conclude that $H_0$ is an $\varepsilon_{0}$-dense subgraph of $H$.
However, by Claim \ref{claim5.1C}, we have \(e(G_{(1)})-e(G_{(\ell)})<\lfloor\varepsilon_{0} m\rfloor\),
which implies that $G_{(\ell)}(=H)$ contains no $\varepsilon_{0}$-dense subgraphs.
This leads to a contradiction.
Thus, we have $S^{(2)}=\varnothing$, as claimed.
\end{proof}

By Claims \ref{claim5.10C} and \ref{claim5.13C},
we have $\cup_{i\in[r]}W_i\subseteq S^{(2)}=\varnothing$.
Hence, $V_i^{(2)}=V_i\setminus(W_i\cup S_i^{(2)})=V_i$ for any $i\in [r]$.
Recall that $\cup_{i\in[r]}V_i$ is a partition of $V(H)$,
and our aim is to show that $H\subseteq K_{V_1,V_2,\dots,V_r}$.
Suppose, for the sake of contradiction, that there exists an edge $uv$ in $\cup_{i\in [r]}E(H[V_i])$.
Without loss of generality, we may assume that $uv\in E(H[V_1])$.

On the other hand, since $F$ is a color-critical graph with $\chi(F)=r+1$,
there exists a good edge $xy\in E(F)$ such that $\chi(F\!-\!xy)=r$.
Hence, $F\!-\!xy$ admits a proper $r$-colouring $\chi_{xy}$,
in which both $x$ and $y$ are assigned color $1$.
Recall that $$\tau_{xy}^{(i)}:=\bigl|\{z\in V(F)\setminus \{x,y\} : \chi_{xy}(z) = i\}\bigr|.$$
Similar to the discussion in \eqref{align-00G} for counting $\text{Cont}_{T^*_{n,r}}(\chi_{xy})$,
we will now estimate a lower bound for $\text{Cont}_{H}(\chi_{xy})$,
which is the contribution of the \(r\)-coloring \(\chi_{xy}\) to the total number of edge-preserving injections from $F$ to $H$.
Since we only need a lower bound for $\text{Cont}_{H}(\chi_{xy})$,
we can embed all vertices of $F$ assigned color $i$ into the partition class $V_i$ of $H$ for each $i\in [r]$.

\begin{enumerate}
\item[\rm(i)]
First, there are two ways to map $\{x,y\}$ onto $\{u,v\}$;
By Claim~\ref{claim5.4C}, we have $|V_i|\geq(\frac{1}{r}\!-\!3\varepsilon_1)n$ for any $i\in [r]$.
This implies that there are at least $((\frac{1}{r}\!-\!\varepsilon_2)n)_{\tau_{xy}^{(1)}}$
ways to map the remaining $\tau_{xy}^{(1)}$ vertices of $F$ that are assigned color $1$ onto vertices of $H$ in $V_1\setminus\{u,v\}$,
where $(\cdot)_{\tau_{xy}^{(1)}}$ is defined in \eqref{align-00H}.
The vertices chosen from $H$ forms a $(\tau_{xy}^{(1)}\!+\!2)$-set $\widetilde{L}_1$.

\item[\rm(ii)]
By Claim~\ref{claim5.7C}, the vertices of $\widetilde{L}_1$ have at least
$(\frac{1}{r}\!-\!3f^2\varepsilon_2)n$ common neighbours in $V_2$.
Thus, there are at least $((\frac{1}{r}\!-\!3f^2\varepsilon_2)n)_{\tau_{xy}^{(2)}}$
ways to map the $\tau_{xy}^{(2)}$ vertices of $F$ that are assigned color $2$ onto vertices of $H$ in $V_2$.
These selected vertices of $H$ forms a $\tau_{xy}^{(2)}$-set $\widetilde{L}_2$.

\item[\rm(iii)]
Applying Claim~\ref{claim5.7C} iteratively for $j=3,\dots,r$,
the vertices of $\bigcup_{i\in [j-1]}\widetilde{L}_i$ have at least
$(\frac{1}{r}-3f^2\varepsilon_3)n$ common neighbours in $V_j$.
Thus, there are at least $((\frac{1}{r}\!-\!3f^2\varepsilon_2)n)_{\tau_{xy}^{(j)}}$
ways to map the $\tau_{xy}^{(j)}$ vertices of $F$ that are assigned color $j$ onto vertices of $H$ in $V_j$.
\end{enumerate}

Consequently, we can deduce that
\begin{align*}
\text{Cont}_H(\chi_{xy})
\geq 2\Big(\big(\frac{1}{r}\!-\!\varepsilon_2\big)n\Big)\!_{\tau_{xy}^{(1)}}\prod_{i=2}^{r}\!\Big(\big(\frac{1}{r}\!-\!3f^2\varepsilon_2\big)n\Big)\!_{\tau_{xy}^{(i)}}
>\big(1\!-\!\varepsilon\big)\cdot 2\bigl(\tfrac{n}{r}\!-\!2\bigr)_{\tau_{xy}^{(1)}}\prod_{i=2}^{r}\!\Big(\frac{n}{r}\Big)\!_{\tau_{xy}^{(i)}},
\end{align*}
where the last inequality holds as $\varepsilon_2\ll \varepsilon$.
By summing over all good edges of $F$ and dividing by $\text{Aut}(F)$,
and then combining with equality \eqref{align-01F}, we obtain
$N_F(H,uv)>(1-\varepsilon)c(n,F)$.
This leads to a contradiction with Claim \ref{claim5.9C}.
Therefore, we have $H\subseteq K_{V_1,V_2,\dots,V_r}$.

This completes the proof of Theorem \ref{thm1.8C}.
\end{proof}

\section{Concluding remarks}

From Theorem \ref{thm1.7C}, we know that $\rho(G)\leq \sqrt{m}$ for any triangle-free graph $G$ of size $m.$
Ning and Zhai \cite{NZ2021} showed that if $G$ is an $m$-edge graph with $\delta(G)\geq 1$ and $\rho(G)\geq \sqrt{m}$,
then \(N_{K_3}(G)\geq \lfloor\frac12(\sqrt{m}\!-\!1)\rfloor\), unless $G$ is a complete bipartite graph.
Roughly speaking, this implies that $N_{K_3}(G)=\Omega(m^{(f\!-\!2)/2})$, where $f=|K_3|=3$.
Let $C_4^+$ denote the \emph{kite} graph, which is formed from a 4-cycle by adding a chord.
Recently, Li, Liu, and Zhang \cite{Li2025+B} demonstrated that
if $G$ is an $m$-edge graph with $\rho(G)>\sqrt{m}$,
then \(N_{C_4^+}(G)=\Omega(m)\) and the bound is tight up to a constant factor.
Based upon this, it is natural to ask the following problem.

\begin{prob}
Let $F$ be a fixed graph with $|F|=f$ and $\chi(F)=3$,
and let $\mathrm{spex}(m,F)$ denote the maximum spectral radius over all $F$-free graphs of size $m$.
For sufficiently large $m$,
is it true that $N_F(G)=\Omega(m^{(f\!-\!2)/2})$
for any $m$-edge graph $G$ satisfying $\rho(G)>\mathrm{spex}(m,F)$?
\end{prob}

Recently, Li, Feng, and Peng \cite{Li2024+} conjectured that under a stronger condition where
$\rho(G)\geq \frac12(1\!+\!\sqrt{4m-3})$, one has \(N_{K_3}(G)\geq\frac12(m-1)\),
with equality if and only if $G$ is the join of $K_2$ and isolated vertices.
If this is true, it would imply that the condition
$\rho(G)\geq spex(m,K_3)+\frac12$ is sufficient to ensure $N_{K_3}(G)=\Omega(m)$.
In Theorem \ref{thm1.8C}, we establish an asymptotically tight supersaturation result that
$N_F(G)=\Omega(m^{(f\!-\!2)/2})$ for any color-critical graph $F$ with $\chi(F)\geq 4$, and for
any graph $G$ under the critical spectral condition that $\rho(G)\geq spex(m,F)$.
This leads us to propose the following conjecture under a non-critical spectral condition.

\begin{conj}
Let $F$ be a color-critical graph with $|F|=f$ and $\chi(F)=r+1\geq 4$.
For any fixed positive constant $C$ and sufficiently large $m$,
if $G$ is a graph with $\rho(G)\geq \sqrt{(1\!-\!\frac{1}{r})2m}+C$,
then we have $N_F(G)=\Omega(m^{(f\!-\!1)/2})$.
\end{conj}

\end{document}